\theoremstyle{plain}
\newtheorem{thm}{Theorem}[section]
\newtheorem{prop}[thm]{Proposition}
\newtheorem{cor}[thm]{Corollary}
\newtheorem{lem}[thm]{Lemma}
\newtheorem{conj}[thm]{Conjecture}
\newtheorem*{thm*}{Theorem}
\theoremstyle{definition}
\newtheorem*{nota*}{Notation}
\newtheorem{rem}[thm]{Remark}
\newcommand{\Q}{\mathbb{Q}}
\newcommand{\Z}{\mathbb{Z}}
\newcommand{\Ns}{\mathbb{Z}_{>0}}
\newcommand{\N}{\mathbb{Z}_{\geq0}}
\newcommand{\C}{\mathbb{C}}
\newcommand{\R}{\mathbb{R}}
\renewcommand{\H}{\mathbb{H}}
\newcommand{\tr}{\operatorname{tr}}
\renewcommand{\i}{\mathrm{i}}
\newcommand{\e}{\mathrm{e}}
\newcommand{\Aut}{\operatorname{Aut}}
\newcommand{\ord}{\operatorname{ord}}
\newcommand{\voa}{vertex operator algebra}
\newcommand{\Voa}{Vertex operator algebra}
\newcommand{\VOA}{Vertex Operator Algebra}
\newcommand{\vosa}{vertex operator subalgebra}
\newcommand{\fpvosa}{fixed-point vertex operator subalgebra}
\newcommand{\vac}{\textbf{1}}
\newcommand{\ch}{\operatorname{ch}}
\newcommand{\id}{\operatorname{id}}
\newcommand{\amgis}{\zeta}
\newcommand{\eps}{\varepsilon}
\newcommand{\lcm}{\operatorname{lcm}}
\newcommand{\SLZ}{\operatorname{SL}_2(\mathbb{Z})}
\newcommand{\GL}{\operatorname{GL}}
\newcommand{\ee}{\mathfrak{e}}
\newcommand{\g}{\mathfrak{g}}
\newcommand{\hh}{\mathfrak{h}}
\newcommand{\h}{{L_\C}}
\newcommand{\ad}{\operatorname{ad}}
\newcommand{\Inn}{\operatorname{Inn}}
\newcommand{\Out}{\operatorname{Out}}
\newcommand{\orb}{\operatorname{orb}}
\newcommand{\strathol}{strongly rational, holomorphic}
\newcommand{\strat}{strongly rational}
\newcommand{\II}{I\!I}
\newlength{\myl}
\newcommand{\s}{\hspace{\myl}}
\newcommand{\maintheorem}{%
A \strathol{} \voa{} of central charge 24 is up to isomorphism uniquely determined by the Lie algebra structure of its weight-one space $V_1$ if $V_1$ is isomorphic to $A_5C_5E_{6,2}$, $A_3A_{7,2}C_3^2$, $A_{8,2}F_{4,2}$, $B_8E_{8,2}$, $A_2^2A_{5,2}^2B_2$, $C_8F_4^2$, $A_{4,2}^2C_{4,2}$, $A_{2,2}^4D_{4,4}$, $B_5E_{7,2}F_4$, $B_4C_6^2$, $A_{4,5}^2$, $A_4A_{9,2}B_3$, $B_6C_{10}$, $A_1C_{5,3}G_{2,2}$ or $A_{1,2}A_{3,4}^3$, corresponding to the cases $44$, $33$, $36$, $62$, $26$, $52$, $22$, $13$, $53$, $48$, $9$, $40$, $56$, $21$ and $7$ on Schellekens' list.
}
\begin{document}

\title[Dimension Formulae in Genus Zero and Uniqueness of VOAs]{Dimension Formulae in Genus Zero and Uniqueness of Vertex Operator Algebras}
\author[Jethro van Ekeren, Sven Möller and Nils~R.\ Scheithauer]{Jethro van Ekeren\textsuperscript{\lowercase{a}}, Sven Möller\textsuperscript{\lowercase{b},\lowercase{c}} and Nils~R.\ Scheithauer\textsuperscript{\lowercase{b}}}
\thanks{\textsuperscript{a}{Universidade Federal Fluminense, Niterói, RJ, Brazil}}
\thanks{\textsuperscript{b}{Technische Universität Darmstadt, Darmstadt, Germany}}
\thanks{\textsuperscript{c}{Rutgers University, Piscataway, NJ, United States of America}}
\thanks{Email: \href{mailto:jethrovanekeren@gmail.com}{\nolinkurl{jethrovanekeren@gmail.com}}, \href{mailto:math@moeller-sven.de}{\nolinkurl{math@moeller-sven.de}}}

\begin{abstract}
We prove a dimension formula for orbifold \voa{}s of central charge 24 by automorphisms of order~$n$ such that $\Gamma_0(n)$ is a genus zero group. We then use this formula together with the inverse orbifold construction for automorphisms of orders $2$, $4$, $5$, $6$ and $8$ to establish that each of the following fifteen Lie algebras is the weight-one space $V_1$ of exactly one holomorphic, $C_2$-cofinite \voa{} $V$ of CFT-type and central charge 24: $A_5C_5E_{6,2}$, $A_3A_{7,2}C_3^2$, $A_{8,2}F_{4,2}$, $B_8E_{8,2}$, $A_2^2A_{5,2}^2B_2$, $C_8F_4^2$, $A_{4,2}^2C_{4,2}$, $A_{2,2}^4D_{4,4}$, $B_5E_{7,2}F_4$, $B_4C_6^2$, $A_{4,5}^2$, $A_4A_{9,2}B_3$, $B_6C_{10}$, $A_1C_{5,3}G_{2,2}$ and $A_{1,2}A_{3,4}^3$.
\end{abstract}

\maketitle

\setcounter{tocdepth}{1}
\tableofcontents

\section{Introduction}

\Voa{}s and their representation theory formalise the physical notions of chiral algebra and fusion ring of a conformal field theory (CFT). If the fusion ring is as simple as possible, then the conformal field theory is said to be holomorphic. More precisely, a rational \voa{} is said to be \emph{holomorphic} if it possesses a unique irreducible module, namely itself.

In this paper we follow the convention in \cite{DM04b} and call a \voa{} \emph{\strat{}} if it is rational (as defined in \cite{DLM97}, for example), $C_2$-cofinite, self-contragredient (or self-dual) and of CFT-type.

The central charge $c$ of a \strathol{} \voa{} $V$ is necessarily a positive integer multiple of $8$ (a simple consequence of \cite{Zhu96}), and a complete classification is known in the cases $c=8,16$ \cite{DM04}. The classification at central charge $c=24$ is an open problem, but a lot of progress has been made recently.

Let $V$ be a \strathol{} \voa{} and $\sigma$ a finite order automorphism of $V$. In \cite{EMS15} we study the fusion ring of the \fpvosa{} $V^\sigma$, showing that it is naturally isomorphic to the group ring of a finite quadratic space and that Zhu's representation of $\SLZ$ on the trace functions of the irreducible $V^\sigma$-modules \cite{Zhu96} coincides up to a character with the corresponding Weil representation. In the same paper we develop a cyclic orbifold theory and use it to construct a new holomorphic \voa{}, denoted by $V^{\orb(\sigma)}$, for appropriately chosen $\sigma$.

To fully exploit the power of the orbifold theory as a tool for the construction and classification of holomorphic \voa{}s it is important to be able to deduce information about the isomorphism type of $V^{\orb(\sigma)}$ from $V$ and $\sigma$. One useful invariant of a \voa{} of central charge $24$ is the dimension of its weight-one Lie algebra $V_1$. In Theorem~\ref{thm:dimform} we prove a formula for the dimension of this Lie algebra $V_1^{\orb(\sigma)}$ in terms of $V$ and $\sigma$. The following is a useful corollary. For more details see Section~\ref{sec:dimform}.
\begin{thm*}[Dimension Formula]
Let $n$ be a positive integer such that the congruence subgroup $\Gamma_0(n)\subseteq\SLZ$ has genus zero. Let $V$ be a \strathol{} \voa{} of central charge $24$ and $\sigma$ an automorphism of $V$ of type $n\{0\}$ such that the conformal weights of the irreducible $\sigma^i$-twisted $V$-modules obey $\rho(V(\sigma^i))\geq 1$ for $i=1,\ldots,n-1$. Then
\begin{equation*}
\sum_{d\mid n}\frac{\phi((d,n/d))}{(d,n/d)}\left(24+\frac{n}{d}\dim(V_1^\sigma)-\dim(V_1^{\orb(\sigma^d)})\right)=24
\end{equation*}
holds where $\phi$ is Euler's totient function.
\end{thm*}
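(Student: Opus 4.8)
The plan is to read both sides of the asserted identity off the cusps of $\Gamma_0(n)$ and to use the genus-zero hypothesis to collapse the local data into a single global relation. First I would introduce, for $i,j\in\Z/n\Z$, the twisted-twining trace functions
\begin{equation*}
T_{(i,j)}(\tau)=\tr_{V(\sigma^i)}\sigma^j\,q^{L_0-1},\qquad q=\e^{2\pi\i\tau},
\end{equation*}
where $V(\sigma^0)=V$. Since $c=24$ the shift $q^{L_0-1}$ matches the trivial multiplier system of $\Delta=\eta^{24}$, so by the modularity recalled in the introduction (the identification of Zhu's representation with the Weil representation in \cite{EMS15,Zhu96}) together with the type $n\{0\}$ hypothesis these functions are permuted, without automorphy factor, under the natural right action of $\SLZ$ on the pair $(i,j)$. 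Consequently the fixed-point character $\ch_{V^\sigma}=\tfrac1n\sum_{j=0}^{n-1}T_{(0,j)}$ is a weight-zero modular function for $\Gamma_0(n)$, while each orbifold $V^{\orb(\sigma^d)}$ is again \strathol{} of central charge $24$ and hence $\ch_{V^{\orb(\sigma^d)}}=J+\dim\bigl(V_1^{\orb(\sigma^d)}\bigr)$ with $J=q^{-1}+O(q)$ the normalised Hauptmodul of $\SLZ$.

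Next I would analyse $\ch_{V^\sigma}$ cusp by cusp. A set of representatives for the cusps of $\Gamma_0(n)$ is indexed by the divisors $d\mid n$: over each $d$ there are exactly $\phi((d,n/d))$ cusps, and their width is controlled by $(d,n/d)$. Conjugating $\ch_{V^\sigma}$ to a cusp of denominator $d$ by a suitable element of $\SLZ$ carries the first index $0$ onto the multiples of $d$, so that the local expansion only involves the sectors $V(\sigma^{dk})$, $k=0,\ldots,n/d-1$, that is, precisely the sectors of the orbifold by $\sigma^d$. I would check that the hypotheses transfer: $\sigma^d$ is of type $(n/d)\{0\}$, and the single assumption $\rho(V(\sigma^i))\geq1$ for $1\leq i\leq n-1$ supplies the conformal-weight bound $\rho(V(\sigma^{dk}))\geq1$ needed for every orbifold by $\sigma^d$. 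This bound is exactly what forces each twisted sector to be holomorphic at $q=0$, so that the only principal part at each cusp is the simple pole coming from the untwisted vacuum, and the constant term at the cusp over $d$ is governed by $\dim(V_1^\sigma)$ and $\dim(V_1^{\orb(\sigma^d)})$ together with the universal shift producing the summand $24+\tfrac{n}{d}\dim(V_1^\sigma)-\dim(V_1^{\orb(\sigma^d)})$.

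Finally I would assemble these contributions globally. Because $\Gamma_0(n)$ has genus zero, its compactified modular curve is $\mathbb{P}^1$, so $\ch_{V^\sigma}$ is a rational function of a single Hauptmodul and the expansions at the $\sum_{d\mid n}\phi((d,n/d))$ cusps obey one linear relation. Summing the local data with multiplicity $\phi((d,n/d))$ and width-weight $1/(d,n/d)$ produces exactly the left-hand side, while the value $24$ on the right enters through the quasimodular anomaly of the weight-two Eisenstein series $E_2=1-24\sum_{m\geq1}\sigma_1(m)q^m$, i.e.\ through the central charge $c=24$, which measures the global obstruction. As a sanity check one may take $n=2$, where the identity reduces to $\dim(V_1^{\orb(\sigma)})=24+3\dim(V_1^\sigma)-\dim(V_1)$; applied to the Leech lattice \voa{} with the standard order-two automorphism ($\dim V_1=24$, $\dim V_1^\sigma=0$) this returns $\dim(V_1^{\orb(\sigma)})=0$, recovering the moonshine module $V^\natural$.

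The hard part will be the normalisation bookkeeping in the middle step. One must fix the local coordinate and width at each cusp, track the (trivial, by type $n\{0\}$) automorphy factors of the $T_{(i,j)}$, and verify that folding the $n$-term sum onto the $n/d$ sectors of the $\sigma^d$-orbifold yields precisely the combination $\tfrac{n}{d}\dim(V_1^\sigma)-\dim(V_1^{\orb(\sigma^d)})$ together with the width factor $1/(d,n/d)$. Equally delicate is pinning the constant to $24$: this requires accounting for the $E_2$-anomaly across all cusps of $\Gamma_0(n)$ and showing that the genus-zero relation leaves no cusp-form contribution. The conceptual skeleton — the cusp–divisor dictionary and the genus-zero collapse — is clean; it is this matching of arithmetic normalisations with the representation-theoretic constant terms, using $\rho(V(\sigma^i))\geq1$ to suppress all higher-order poles, that demands care.
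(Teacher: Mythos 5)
Your skeleton coincides with the paper's: the twisted traces form a vector-valued modular form for the Weil representation, $\ch_{V^\sigma}$ is a weight-zero modular function for $\Gamma_0(n)$, the cusps are indexed by divisors $d\mid n$ with $\phi((d,n/d))$ classes over each $d$ and width $(n/d)/(d,n/d)$, the expansion at a cusp of denominator $d$ involves exactly the sectors $V(\sigma^{dk})$ so that summing over the cosets of $\Gamma_0(n)$ in $\SLZ$ reproduces $\sum_{d\mid n}\frac{\phi((d,n/d))}{(d,n/d)}\ch_{V^{\orb(\sigma^d)}}$ (this is Proposition~\ref{prop:stt}), and the hypothesis $\rho(V(\sigma^i))\geq1$ together with CFT-type kills every singular coefficient at every cusp except the $q^{-1}$ from the vacuum. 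Your $n=2$ sanity check is also correct.

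The gap is precisely in the step you flag as hard: the theorem is quantitative, and nothing in your proposal actually produces the coefficients or the constant $24$. In particular, the claim that the $24$ ``enters through the quasimodular anomaly of $E_2$'' is unsubstantiated --- no weight-two object appears anywhere in your setup, and this is not the mechanism in the paper. What the paper does instead: genus zero enters via Abel's theorem, which guarantees for each cusp $s$ a modular function $f_s$, holomorphic and non-zero on $\H^*$ except for a pole of order $1/t_s$ at $s$ and a prescribed zero (Lemma~\ref{lem:fs}); these are realised explicitly as eta quotients. One then writes $\ch_{V^\sigma}=c+\sum_s g_s(f_s)$ with $\deg g_s=t_s$, pins down the coefficients of each $g_s$ by matching the $t_s$ singular coefficients of the expansion at $s$ (this is where $\rho\geq1$ reduces $g_s$ essentially to its top term), and only then reads off the constant terms of the $F_s$; the $24$ emerges from the explicit constant terms of these eta quotients, not from an $E_2$ identity. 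Relatedly, your appeal to ``one linear relation'' among the cusp expansions understates what is needed: the correct genus-zero input is that the principal parts at all cusps determine the function up to a single additive constant, which is then fixed by the $q^0$-coefficient at $\infty$, and one must still evaluate the resulting constant terms cusp by cusp. Until that bookkeeping is carried out (or your $E_2$/residue-pairing idea is set up precisely and shown to yield the same linear combination), the formula is motivated but not proved.
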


In \cite{EMS15} we also show that the weight-one space $V_1$ of a \strathol{} \voa{} $V$ of central charge $c=24$ must be one of an explicit list of $71$ Lie algebras, rigorously proving a result by Schellekens on ``meromorphic $c=24$ conformal field theories'' \cite{Sch93}. We shall in the following refer to these 71 cases as \emph{Schellekens' list}.

Efforts by many authors have finally led to the result that all of these 71 Lie algebras on Schellekens' list are realised as weight-one spaces of \strathol{} \voa{}s of central charge 24 \cite{FLM88,DGM90,Don93,DGM96,Lam11,LS12a,LS15a,Miy13b,SS16,EMS15,Moe16,LS16a,LS16,LL16}. All are obtained as lattice \voa{}s and their (iterated) orbifolds by finite order automorphisms. Some of these results are based on the general theory of cyclic orbifolds developed in \cite{EMS15,Moe16}.

In order to obtain a classification of \voa{}s it is necessary to show that the Lie algebra structure of the weight-one space of a \strathol{} \voa{} of central charge 24 uniquely determines this \voa{} up to isomorphism. This is known for the 24 Niemeier lattice \voa{}s \cite{DM04}, 3 \voa{}s obtained as orbifolds of order~3 \cite{LS16b}, 13 \voa{}s obtained as $(-1)$-orbifolds \cite{KLL18}, 2 framed \voa{}s \cite{LS15a} and for one more case \cite{LL16}.

In this paper we add 14 more cases to this list (see Theorem~\ref{thm:main} and below).  The uniqueness of 5 further cases was proved in \cite{LS17} after the completion of this work. In total, this leaves 9 open cases, including the Moonshine module $V^\natural$. It is a decades-old conjecture that $V^\natural$ is the only (\strat{}) holomorphic \voa{} $V$ of central charge 24 with $V_1=\{0\}$ \cite{FLM88}. The methods used in this paper are, however, unsuited to prove this conjecture.
\begin{thm*}[Uniqueness]
\maintheorem
\end{thm*}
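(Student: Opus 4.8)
The plan is to prove all fifteen cases uniformly by the \emph{inverse (or reverse) orbifold method}, using the Dimension Formula above as the main computational tool. Let $V$ be a \strathol{} \voa{} of central charge $24$ whose weight-one Lie algebra $V_1$ is isomorphic to one of the fifteen Lie algebras $\g$ in the statement. Each such $\g$ is a direct sum of simple ideals at prescribed levels, and from this level data one reads off a rational element $h$ in a Cartan subalgebra of $V_1$ and sets $\sigma=\exp(2\pi\i\, h_{(0)})$, an inner automorphism of $V$ of order $n\in\{2,4,5,6,8\}$. The essential point of this first step is that the $\Aut(V)$-conjugacy class of $\sigma$ depends only on the Lie algebra structure of $V_1$: after conjugating by $\Inn(V_1)$ we may take $h$ dominant, and the remaining freedom is fixed by requiring $\sigma$ to have the correct order and fixed-point data. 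Thus $\sigma$ is canonically attached to $\g$.

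First I would verify that $\sigma$ meets the hypotheses needed to orbifold and to apply the Dimension Formula, namely that it is of type $n\{0\}$, that $V^{\orb(\sigma)}$ is again \strathol{} of central charge $24$, and that $\rho(V(\sigma^i))\geq1$ for $1\leq i\leq n-1$; all of this follows from the cyclic orbifold theory of \cite{EMS15} together with the explicit structure of $\g$. I would then compute $\dim(V_1^\sigma)$ directly from $\g$ and $\sigma$ and feed it into the Dimension Formula. For prime $n$ the formula collapses to $\dim(V_1^{\orb(\sigma)})=24+(n+1)\dim(V_1^\sigma)-\dim(\g)$, while for $n\in\{4,6,8\}$ one first determines the intermediate dimensions $\dim(V_1^{\orb(\sigma^d)})$ for the proper divisors $d\mid n$ by an inductive bootstrap (applying the lower-order formula to $\sigma^d$) and then solves for $\dim(V_1^{\orb(\sigma)})$. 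In every case the outcome is $\dim(V_1^{\orb(\sigma)})=24$ with $V_1^{\orb(\sigma)}$ abelian; since no semisimple Lie algebra on Schellekens' list has dimension $24$, this forces $V_1^{\orb(\sigma)}$ abelian, and the only \strathol{} \voa{} of central charge $24$ with abelian weight-one space is the Leech lattice \voa{} $V_\Lambda$ by the classification of Niemeier lattice \voa{}s \cite{DM04}. Hence $W:=V^{\orb(\sigma)}\cong V_\Lambda$.

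Next I would run the construction backwards. By the inverse orbifold theory of \cite{EMS15} there is an automorphism $\tau$ of $W\cong V_\Lambda$, of order $n$ and canonically associated to the pair $(V,\sigma)$, with $V\cong W^{\orb(\tau)}$. Because conjugate automorphisms produce isomorphic orbifold \voa{}s, and because isomorphisms of the pairs $(V,\sigma)$ correspond under the orbifold duality to isomorphisms of the pairs $(W,\tau)$, the isomorphism type of $V$ is determined once we know the $\Aut(V_\Lambda)$-conjugacy class of $\tau$. Thus the theorem reduces to the statement that \emph{there is exactly one conjugacy class of order-$n$ automorphisms $\tau$ of $V_\Lambda$ for which $V_\Lambda^{\orb(\tau)}$ has weight-one space isomorphic to $\g$.}

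This last reduction is the main obstacle. To settle it I would use the description of $\Aut(V_\Lambda)$ as its torus of inner automorphisms extended by a lift of $\Aut(\Lambda)=\mathrm{Co}_0$, and the resulting classification of finite-order automorphisms of $V_\Lambda$ up to conjugacy in terms of their frame (cycle) shapes together with an inner part. For each candidate class I would compute the weight-one space of the orbifold — equivalently, the fixed-point dimensions and twisted conformal weights entering the Dimension Formula — and match these against the root system and the levels of $\g$. The heart of the argument is to show that these invariants single out a unique class; the fixed-point and character computations, and the elimination of the spurious classes, are the delicate case-by-case part of the proof, and are precisely where the genus-zero hypothesis and the numerical rigidity supplied by the Dimension Formula do the work. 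Once uniqueness of the class of $\tau$ is established, $V\cong V_\Lambda^{\orb(\tau)}$ is determined up to isomorphism, completing the proof for all fifteen Lie algebras.
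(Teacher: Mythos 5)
Your overall strategy -- pick an inner automorphism $\sigma_h$ determined by the Lie algebra data, orbifold to a known lattice \voa{}, and then pin down the conjugacy class of the inverse orbifold automorphism -- is indeed the skeleton of the paper's argument. But your proposal has a concrete error and two deferred steps that contain essentially all of the content.

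The error is the claim that in every case $\dim(V_1^{\orb(\sigma)})=24$ with abelian weight-one space, so that $V^{\orb(\sigma)}\cong V_\Lambda$. For an inner automorphism $\sigma_h$ the fixed-point algebra $V_1^{\sigma_h}$ contains the Cartan subalgebra together with every root space $\g_\alpha$ with $\alpha(h)\in\Z$; it is abelian only if no root is integral on $h$, and by Kac's theory the minimal order of such an automorphism on a simple component is its Coxeter number. For $A_5C_5E_{6,2}$ this already forces $\ord(\sigma_h|_{V_1})\geq\lcm(6,10,12)=60$, far outside the genus zero range, so the Leech lattice \voa{} cannot be reached this way for most of the fifteen cases. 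The paper instead chooses $h$ of order $n\in\{2,4,5,6,8\}$ so that $V_1^{\sigma_h}$ matches the fixed-point algebra of a standard lift $\hat\nu$ on a Niemeier lattice $L$ \emph{with non-trivial root system}; the dimension formula then yields $\dim(V_1^{\orb(\sigma_h)})=\dim((V_L)_1)$ (values such as $264$, $216$, $744$, never $24$), and $V^{\orb(\sigma_h)}\cong V_L$ follows from the rank-$24$ criterion of \cite{DM04b}. Your reduction to a unique conjugacy class of $\tau$ in $\Aut(V_\Lambda)$ is therefore aimed at the wrong target; the corresponding step in the paper is uniqueness of a conjugacy class in $\Aut(V_L)$, settled by combining Kac's theory with the structure $\Aut(V_L)/K\cong H$ and explicit cocycle/glue-code computations.

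Beyond this, two steps you dismiss as routine are where most of the work lies. First, the bound $\rho(V(\sigma_h^i))\geq 1$ does not follow from general orbifold theory: since $V$ is an \emph{unknown} \voa{} with prescribed $V_1$, one must bound the conformal weights of all irreducible $\langle V_1\rangle$-modules that could occur in $V$, using Li's $\Delta$-operator and, in cases $A_{4,5}^2$ and $A_{1,2}A_{3,4}^3$, a linear-programming analysis of the multiplicities forced by modular invariance. Second, the "unique conjugacy class" step requires showing not merely conjugacy of Lie algebra automorphisms but conjugacy in $\Aut(V_L)$, which involves lifting issues (inner automorphisms $\e^{(2\pi\i)h_0}$ with $h$ in the dual of the root lattice, order doubling, and in four cases an extra invariant $\rho(V_L(g))=1$ to separate classes). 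As written, your proposal asserts these points rather than proving them, and its central numerical claim is false.
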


The strategy we use to prove uniqueness is developed in \cite{LS16b} and is based in an essential way on the \emph{inverse orbifold construction}, first developed in \cite{EMS15} (and called ``reverse orbifold'' in \cite{LS16b}). In a nutshell, the idea is as follows: Let $V$ be a \strathol{} \voa{} of central charge $24$ with weight-one Lie algebra $V_1$. Any inner automorphism $\sigma=\e^{(2\pi\i)\ad_v}$ of $V_1$, $v\in V_1$, extends to an automorphism of $V$. Choose such an automorphism, and suppose that $V^{\orb(\sigma)}$ is isomorphic to the lattice \voa{} $V_L$. By virtue of its construction as an orbifold, $V^{\orb(\sigma)}$ carries an inverse orbifold automorphism $\amgis$ of the same order as $\sigma$ with $V=(V^{\orb(\sigma)})^{\orb(\amgis)}$. Under favourable circumstances the order and the fixed-point subalgebra $(V^{\orb(\sigma)})_1^{\amgis}=V_1^\sigma$ of $\amgis$ determine $\amgis$ up to conjugacy in $\Aut(V_L)$. If this is the case, then $V\cong(V_L)^{\orb(\amgis)}$ is shown to be unique up to isomorphism.

\subsection*{Outline}
We assume that the reader is familiar with \voa{}s and their representation theory and with the specific examples provided by lattice \voa{}s and affine \voa{}s (see, e.g.\ \cite{LL04}).

The article is organised as follows: In Section~\ref{sec:voalie} we describe Lie algebras occurring as weight-one spaces of \voa{}s, inner automorphisms and the classification of the weight-one structures of \strathol{} \voa{}s of central charge 24, dubbed Schellekens' list.

Section~\ref{sec:orbifold} reviews the cyclic orbifold construction of \strathol{} \voa{}s and the inverse orbifold construction. In Section~\ref{sec:dimform} a genus zero dimension formula for the weight-one space of these orbifolds for central charge 24 is derived.

In Section~\ref{sec:uniqueness} the general procedure to prove the uniqueness of certain \voa{}s on Schellekens' list is described. This is split up into three parts, which are described in detail in Sections \ref{sec:part1}, \ref{sec:part2} and \ref{sec:part3} for the fifteen cases considered in this text.

Finally, in Section~\ref{sec:main} we state the main result of this paper.

\subsection*{Acknowledgements}
The authors would like to thank Ching Hung Lam and Hiroki Shimakura for helpful discussions. The second and third author both were partially supported by the DFG project ``Infinite-dimensional Lie algebras in string theory''. A part of the work was done while the first and second author visited the ESI in Vienna for the conference ``Geometry and Representation Theory'' in 2017. They are grateful to both the ESI and to the organisers of the conference. We thank the two anonymous referees for their comments and suggestions.

\subsection*{Computer Calculations}
Sections \ref{sec:part1} and \ref{sec:part3} rely on information about the automorphism groups of the Niemeier lattices, certain subgroups and quotient groups and their decomposition into conjugacy classes, which is partially obtained using the computer algebra system \texttt{Magma} \cite{Magma}. Moreover, the computation of the lattice orbifolds in Section~\ref{sec:part1} makes use of \texttt{Sage} \cite{sage} as described in Section~5.6 of \cite{Moe16}.

\section{\VOA{}s and Lie Algebras}\label{sec:voalie}
We briefly review the appearance of Lie algebras as weight-one spaces of \voa{}s and discuss related concepts such as affine structures and inner automorphisms.

\subsection{Lie Algebras}\label{sec:la}

Let $\g$ be a finite-dimensional, simple Lie algebra over $\C$ of rank $l\in\Ns$. Let $\hh\subseteq\g$ be a Cartan subalgebra, $\Phi\subseteq\hh^*$ the corresponding root system and $\Delta=\{\alpha_1,\ldots,\alpha_l\}\subseteq\Phi$ a base of simple roots, labelled as in \cite{Hum73}, p.~58.
We denote by $(\cdot,\cdot)$ the unique non-degenerate, symmetric, invariant bilinear form on $\g$ normalised so that the induced form on $\hh^*$, which we also denote by $(\cdot,\cdot)$, satisfies $(\alpha,\alpha)=2$ for any long root $\alpha\in\Phi$.

The coroot $\alpha^\vee\in\hh$ associated with $\alpha\in\Phi$ is defined by $\beta(\alpha^\vee)=2(\alpha,\beta)/(\alpha,\alpha)$ for all $\beta\in\hh^*$. The set $\Phi^\vee\subseteq\hh$ of coroots is also a root system, of which the set $\Delta^\vee=\{\alpha^\vee\,|\,\alpha\in\Delta\}$ is a base. The root lattice $Q$ and the coroot lattice $Q^\vee$ are the $\Z$-spans of $\Phi$ and $\Phi^\vee$, respectively.

The lattice $P\subseteq\hh^*$ dual to $Q^\vee\subseteq\hh$ is called the weight lattice, and the elements of the basis $\Lambda_1,\ldots,\Lambda_l\in\hh^*$ dual to the simple coroots $\alpha_1^\vee,\ldots,\alpha_l^\vee\in\hh$ are called the fundamental weights. Similarly, the fundamental coweights $\Lambda_1^\vee,\ldots,\Lambda_l^\vee\in\hh$ are defined to be dual to the simple roots and span the coweight lattice. By definition $2(\Lambda_i,\alpha_j)/(\alpha_j,\alpha_j)=\Lambda_i(\alpha_j^\vee)=\delta_{i,j}$ holds for $i,j=1,\ldots,l$.

Let $\delta:=\sum_{i=1}^l\Lambda_i$ be the sum of all fundamental weights and $\delta^\vee:=\sum_{i=1}^l\Lambda_i^\vee$ the sum of the fundamental coweights. Among the roots $\alpha=\sum_{i=1}^lm_i\alpha_i$ of $\g$, the unique one with the highest value of $\sum_{i=1}^lm_i$ is denoted $\theta$ and called the highest root. The highest root is always a long root. The relation $\theta=\sum_{i=1}^la_i\alpha_i$ defines the ``marks'' $a_1,\ldots,a_l$ of $\g$. Similarly, the corresponding coroot $\theta^\vee$ defines the ``comarks'' $a_1^\vee,\ldots,a_l^\vee$ of $\g$ by $\theta^\vee=\sum_{i=1}^la_i^\vee\alpha_i^\vee$. The Coxeter number and dual Coxeter number are defined as $h:=1+\sum_{i=1}^la_i$ and $h^\vee:=1+\sum_{i=1}^la_i^\vee$, respectively.

We denote by $P_+\subseteq\hh^*$ the set of dominant integral weights, i.e.\ the non-negative integer linear combinations of the fundamental weights $\Lambda_i$. The level of a weight $\mu\in P_+$ is by definition $\mu(\theta^\vee)$. For $k\in\N$ let $P_+^k:=\{\mu\in P_+\,|\,\mu(\theta^\vee)\leq k\}$ denote the subset of $P_+$ of dominant integral weights of level at most $k$. Note that the level of $\delta$ is $h^\vee-1$.

The reflection $r_\alpha$ in the root $\alpha\in\Phi$ is the isometry of $\hh^*$ given by $r_\alpha(\beta)=\beta-\beta(\alpha^\vee)\alpha$ for all $\beta\in\hh^*$, and the Weyl group $W\subseteq\GL(\hh^*)$ is the group generated by the reflections in all roots or equivalently in all simple roots. $W$ fixes $\Phi$, $Q$ and $P$ as sets. Via the isometry $\hh\to\hh^*$ induced by $(\cdot,\cdot)$ the Weyl group $W$ also acts on $\hh$ and fixes $\Phi^\vee$, $Q^\vee$ and the coweight lattice.

We will sometimes write $\Phi(\g)$, $\Delta(\g)$, $(\cdot,\cdot)_\g$, etc.\ in order to specify the simple Lie algebra $\g$.

For a dominant integral weight $\lambda\in P_+$ let $L_\g(\lambda)$ be the up to isomorphism unique finite-dimensional, irreducible highest-weight $\g$-module with highest weight $\lambda$.
For any finite-dimensional $\g$-module $M$ we denote by $\Pi(M)\subseteq P$ the set of weights of $M$, i.e.\ $M$ has weight decomposition $M=\bigoplus_{\mu\in\Pi(M)}M_\mu$. We also denote $\Pi(L_\g(\lambda))$ by $\Pi(\lambda,\g)$.

\subsection{Weight-One Lie Algebra}
Let $V=\bigoplus_{n=0}^\infty V_n$ be a \voa{} of CFT-type. It is well-known that
\begin{equation*}
[a,b]:=a_0b
\end{equation*}
for $a,b\in V_1$ endows $V_1$ with the structure of a Lie algebra and that the zero modes $a_0$ for $a\in V_1$ equip each $V$-module with a weight-preserving action of this Lie algebra.

If $V$ is also self-contragredient, then there exists a non-degenerate, invariant bilinear form $\langle\cdot,\cdot\rangle$ on $V$, which is unique up to a non-zero scalar and symmetric \cite{FHL93,Li94}. We normalise the form so that $\langle\vac,\vac\rangle=-1$, where $\vac$ is the vacuum vector of $V$. Then, for $a,b\in V_1$,
\begin{equation*}
a_1b=b_1a=\langle a,b\rangle\vac.
\end{equation*}
Note that $V_n$ and $V_m$ are orthogonal with respect to $\langle\cdot,\cdot\rangle$ for $m\neq n$.

If $g\in\Aut(V)$ is an automorphism of the \voa{} $V$ (which by definition fixes the vacuum vector $\vac\in V_0$ and the Virasoro vector $\omega\in V_2$), then the restriction of $g$ to $V_1$ is a Lie algebra automorphism, possibly of smaller order.

\subsection{Affine Structure}\label{sec:affine}
Let $\g$ be a simple, finite-dimensional Lie algebra with invariant bilinear form $(\cdot,\cdot)_\g$, normalised as described above. Then the affine Kac-Moody algebra $\hat\g$ associated with $\g$ is the Lie algebra $\hat\g:=\g\otimes\C[t,t^{-1}]\oplus\C K$ with central element $K$ and Lie bracket
\begin{equation}\label{eq:affine}
[a\otimes t^m,b\otimes t^n]:=[a,b]\otimes t^{m+n}+m(a,b)_\g\delta_{m+n,0}K
\end{equation}
for $a,b\in\g$, $m,n\in\Z$.

A representation of $\hat\g$ is said to have level $k\in\C$ if $K$ acts as $k\id$. For a dominant integral weight $\lambda\in P_+(\g)$ and $k\in\C$ let $L_{\hat\g}(k,\lambda)$ be the irreducible $\hat\g$-module of level $k$ obtained by inducing the irreducible highest-weight $\g$-module $L_\g(\lambda)$ up in a certain way to a $\hat\g$-module and taking its irreducible quotient. For more details on affine Kac-Moody algebras and their modules we refer the reader to \cite{Kac90}.

For $k\in\Ns$, $L_{\hat\g}(k,0)$ admits the structure of a rational \voa{} whose irreducible modules are given by the $L_{\hat\g}(k,\lambda)$ for $\lambda\in P_+^k(\g)$, the subset of the dominant integral weights $P_+(\g)$ of level at most $k$ (see \cite{FZ92}, Theorem~3.1.3 or, e.g.\ \cite{LL04}, Sections 6.2 and 6.6). The conformal weight of $L_{\hat\g}(k,\lambda)$ is
\begin{equation*}
\rho(L_{\hat\g}(k,\lambda))=\frac{(\lambda+2\delta,\lambda)_\g}{2(k+h^\vee)}
\end{equation*}
(see \cite{Kac90}, Corollary~12.8).

For a self-contragredient \voa{} $V$ of CFT-type 
\begin{equation*}
[a_m,b_n]=(a_0b)_{m+n}+m(a_1b)_{m+n-1}=[a,b]_{m+n}+m\langle a,b\rangle\delta_{m+n,0}\id_V
\end{equation*}
for all $a,b\in V_1$, $m,n\in\Z$. Comparing this with \eqref{eq:affine} we see that for a simple Lie subalgebra $\g$ of $V_1$ the map $a\otimes t^n\mapsto a_n$ for $a\in\g$ and $n\in\Z$ defines a representation of $\hat\g$ on $V$ of some level $k_\g\in\C$ with $\langle\cdot,\cdot\rangle=k_\g(\cdot,\cdot)_\g$ restricted to $\g$.

In the following let us assume that $V$ is simple and \strat{}. Then it is shown in \cite{DM04b} that the Lie algebra $V_1$ is reductive, i.e.\ a direct sum of a semisimple and an abelian Lie algebra. Moreover, Theorem~3.1 in \cite{DM06b} states that for a simple Lie subalgebra $\g$ of $V_1$ the restriction of $\langle\cdot,\cdot\rangle$ to $\g$ is non-degenerate, the level $k_\g$ is a positive integer, the
\vosa{} of $V$ generated by $\g$ is isomorphic to $L_{\hat\g}(k_\g,0)$ and $V$ is an integrable $\hat\g$-module.

Now suppose that $c=24$ and $V$ is holomorphic. It is proved in \cite{DM04} that the Lie algebra $V_1$ is zero, abelian of dimension 24 or semisimple of rank at most $24$.
Let us assume that $V_1$ is semisimple. Then $V_1$ decomposes into a direct sum
\begin{equation*}
V_1=\g_1\oplus\ldots\oplus\g_r
\end{equation*}
of simple Lie algebras $\g_i$ and the \vosa{} $\langle V_1\rangle$ of $V$ generated by $V_1$ is isomorphic to
\begin{equation*}
\langle V_1\rangle\cong L_{\hat\g_1}(k_1,0)\otimes\ldots\otimes L_{\hat\g_r}(k_r,0)
\end{equation*}
with levels $k_i:=k_{\g_i}\in\Ns$ and has the same Virasoro vector as $V$. This decomposition of the \voa{} $\langle V_1\rangle$ is called the \emph{affine structure} of $V$. We denote it by the symbol
\begin{equation*}
\g_{1,k_1}\ldots\g_{r,k_r}
\end{equation*}
and sometimes omit $k_i$ if it equals 1. By abuse of language we will also refer to the cases of abelian and zero Lie algebras $V_1$ as certain affine structures.

Since $\langle V_1\rangle\cong L_{\hat\g_1}(k_1,0)\otimes\ldots\otimes L_{\hat\g_r}(k_r,0)$ is rational,
$V$ decomposes into the direct sum of finitely many irreducible $\langle V_1\rangle$-modules
\begin{equation}\label{eq:Vdecomp}
V\cong\bigoplus_{(\lambda_1,\ldots,\lambda_r)}m_{(\lambda_1,\ldots,\lambda_r)}L_{\hat\g_1}(k_1,\lambda_1)\otimes\ldots\otimes L_{\hat\g_r}(k_r,\lambda_r)
\end{equation}
where the $m_{(\lambda_1,\ldots,\lambda_r)}\in\N$ and the sum runs over finitely many tuples $(\lambda_1,\ldots,\lambda_r)$ of dominant integral weights with each $\lambda_i\in P_+^{k_i}(\g_i)$, i.e.\ of level at most $k_i$. Note that the conformal weight of a module $M=L_{\hat\g_1}(k_1,\lambda_1)\otimes\ldots\otimes L_{\hat\g_r}(k_r,\lambda_r)$ is
\begin{equation}\label{eq:confsum}
\rho(M)=\sum_{i=1}^r\frac{(\lambda_i+2\delta_i,\lambda_i)_{\g_i}}{2(k_i+h_i^\vee)}.
\end{equation}
Since $\langle V_1\rangle_1=V_1$, except for $(\lambda_1,\ldots,\lambda_r)=(0,\ldots,0)$ the conformal weights of the tensor-product modules in the above decomposition have to be at least 2.

\subsection{Schellekens' List}
In \cite{Sch93} Schellekens classified the possible affine structures of ``meromorphic conformal field theories of central charge $24$''. In \cite{EMS15} his result is proved as a rigorous theorem on \voa{}s:
\begin{thm}[\cite{EMS15}, Theorem~6.4, \cite{Sch93}]\label{thm:schellekens}
Let $V$ be a \strathol{} \voa{} of central charge 24. Then one of the following holds:
\begin{enumerate}
\item $V_1=\{0\}$,
\item $V_1$ is the 24-dimensional abelian Lie algebra $\C^{24}$ (and $V$ is isomorphic to the \voa{} associated with the Leech lattice),
\item $V_1$ is one of 69 semisimple Lie algebras with a certain affine structure given in Table~1 of \cite{Sch93}.
\end{enumerate}
\end{thm}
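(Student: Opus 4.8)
The plan is to follow the modular-invariance strategy underlying Schellekens' analysis, upgrading the structural input already quoted. Since $V$ is \strathol{} of central charge $24$, its graded character $\ch_V(\tau)=\tr_V q^{L_0-1}$ (with $q=\e^{2\pi\i\tau}$) is a modular function for $\SLZ$, holomorphic on the upper half-plane with a simple pole at the cusp; hence $\ch_V=J+\dim(V_1)$ for the normalised Hauptmodul $J=j-744$. In particular every graded dimension, notably $\dim(V_2)=\dim(V_1)+196884$, is fixed by $\dim(V_1)$ alone. Combined with reductivity of $V_1$ and the affine structure of $\langle V_1\rangle$ from \cite{DM04b,DM06b}, this reduces the problem to the semisimple case, where $V$ decomposes as in \eqref{eq:Vdecomp} with non-negative integer multiplicities $m_{(\lambda_1,\dots,\lambda_r)}$ and conformal weights given by \eqref{eq:confsum}.

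The heart of the argument is to exploit not only $\ch_V$ but the full characters of the affine vacuum modules $L_{\hat\g_i}(k_i,0)$ and their irreducible modules. First I would record the central-charge bound $\sum_{i=1}^r c_{\g_i,k_i}\leq 24$, where $c_{\g,k}=k\dim(\g)/(k+h^\vee)$ is the Sugawara central charge, the deficit $24-\sum_i c_{\g_i,k_i}$ being the central charge of the commutant of $\langle V_1\rangle$ in $V$. The decisive constraint comes from refining the character by a Cartan torus: the series $Z(\tau,z)=\tr_V q^{L_0-1}\e^{2\pi\i z\,J_0}$, with $J_0$ the zero mode of a Cartan current, is a Jacobi-type form whose transformation under $\SLZ$ and under the elliptic shifts $z\mapsto z+\tau$ is rigidly controlled. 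Matching leading coefficients (equivalently, analysing the $V_1$-module structure of $V_2$ through the quadratic Casimir) yields Schellekens' key relation
\begin{equation*}
\frac{h_i^\vee}{k_i}=\frac{\dim(V_1)-24}{24}\qquad\text{for all }i=1,\dots,r,
\end{equation*}
so that the ratio $h_i^\vee/k_i$ is independent of $i$ and pinned down by $\dim(V_1)$.

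With this relation in hand the classification becomes a finite search. The uniform-ratio constraint, the central-charge bound, and the positivity and integrality of the $m_{(\lambda_1,\dots,\lambda_r)}$ — which follow from requiring the combination $\ch_V=\sum m_{(\lambda_1,\dots,\lambda_r)}\prod_i \ch_{L_{\hat\g_i}(k_i,\lambda_i)}$ to be $\SLZ$-invariant and to equal $J+\dim(V_1)$, given that the affine characters form a vector-valued modular form under the $S$- and $T$-transformations — severely restrict the admissible tuples $(\g_i,k_i)$. For each surviving candidate one must then verify that a modular-invariant completion exists with non-negative integer multiplicities and all non-vacuum conformal weights $\geq 2$ (cf.\ the remark after \eqref{eq:confsum}); this is the laborious, computer-assisted case analysis producing exactly the $69$ semisimple entries of Table~1 of \cite{Sch93}. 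The two remaining cases $V_1=\{0\}$ and $V_1\cong\C^{24}$ are treated separately, the latter via the lattice-\voa{} identification.

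I expect the main obstacle to be the enumeration itself: turning the modular constraints into a provably complete and mutually exclusive list of $69$ cases requires a systematic sweep over all semisimple Lie algebras and admissible level assignments, and — crucially — making rigorous (as carried out in \cite{EMS15}) the passage from Schellekens' physical reasoning to verified statements about Jacobi forms and about the genuine existence of the multiplicity data, rather than merely their numerical consistency.
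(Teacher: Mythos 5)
Your proposal is correct and follows essentially the same route as the paper's own (sketched) proof: the trichotomy trivial/abelian/semisimple from \cite{DM04}, the relation $h_i^\vee/k_i=(\dim(V_1)-24)/24$ extracted from the quadratic constraint on $V_1$, and further modular-invariance constraints on the $V_1$-module $V_2$ feeding a finite, computer-assisted enumeration deferred to \cite{EMS15,Sch93}. Your torus-refined character $Z(\tau,z)$ is precisely the source of the power-sum identities $S_{V_1}^2$, $S_{V_2}^j$ that the paper invokes from Theorem~6.1 of \cite{EMS15}, so the two arguments coincide in substance.
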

We remark that entry 62 in Table~1 of \cite{Sch93} should read $E_{8,2}B_{8,1}$ (this typo is corrected in the arXiv-version of the paper).
\begin{proof}[Idea of Proof]
The modular invariance of the character of $V$ imposes strong constraints on the Lie algebra $V_1$ and its module $V_2$. In particular, $V_1$ is either trivial, abelian of dimension $24$ or semisimple \cite{DM04}. Suppose in the following that $V_1$ is semisimple.

Let $\g$ be a simple Lie algebra and $M$ a finite-dimensional $\g$-module with weight decomposition $M=\bigoplus_{\mu\in\Pi(M)}M_\mu$. We consider the power sums
\begin{equation*}
S_M^j(z):=\sum_{\mu\in\Pi(M)}\dim(M_\mu)\mu(z)^j
\end{equation*}
for $j\in\N$, which are polynomials in $z\in\hh$. This definition generalises straightforwardly to semisimple $\g$.

Modular invariance of the character of $V$ implies the power sum identity
\begin{equation*}
S_{V_1}^2(z)=\frac{\dim(V_1)-24}{12}\langle z,z\rangle,
\end{equation*}
which in turn implies \cite{DM04}
\begin{equation}\label{eq:dm}
\frac{h_i^\vee}{k_i}=\frac{\dim(V_1)-24}{24}
\end{equation}
for each simple component $\g_i$ of $V_1$ where $h_i^\vee$ is the dual Coxeter number of $\g_i$. Equation~\eqref{eq:dm} has 221 solutions (see \cite{EMS15}, Proposition~6.3, \cite{Sch93}).

Modular invariance implies further power sum identities on $V_2$, e.g.\
\begin{equation*}
S_{V_2}^2(z)=\left(32808-2\dim(V_1)\right)\langle z,z\rangle.
\end{equation*}
In \cite{EMS15}, Theorem~6.1, a list of these identities is presented. They are strong enough to constrain $V_1$ to be one of an explicit list of $69$ semisimple Lie algebras. Together with the trivial and abelian case for $V_1$ these give the 71 affine structures in Schellekens' list (see \cite{Sch93} and \cite{EMS15}, Theorem~6.4).
\end{proof}

Joint efforts \cite{FLM88,DGM90,Don93,DGM96,Lam11,LS12a,LS15a,Miy13b,SS16,EMS15,Moe16,LS16a,LS16,LL16} have finally led to:
\begin{thm}
All 71 affine structures in Schellekens' list are realised as $V_1$-structure of a \strathol{} \voa{} of central charge $24$.
\end{thm}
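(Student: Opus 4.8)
The plan is to prove realisation case by case, exploiting the fact that Theorem~\ref{thm:schellekens} already restricts the possibilities to exactly $71$ affine structures; it therefore suffices to exhibit, for each entry of the list, one \strathol{} \voa{} of central charge $24$ whose weight-one space carries that structure. The starting point is the family of $24$ Niemeier lattice \voa{}s $V_N$. Each is \strathol{} of central charge $24$, its weight-one Lie algebra is the semisimple Lie algebra attached to the root system of $N$ (with every simple factor at level~$1$), and the Leech lattice yields the abelian entry $V_1\cong\C^{24}$. In this way $24$ of the $71$ entries are realised directly.

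For the remaining $47$ entries I would use the cyclic orbifold construction of \cite{EMS15}. Given a \strathol{} \voa{} $V$ of central charge $24$ already realising some entry and a finite-order automorphism $\sigma$ of type $n\{0\}$ with $\rho(V(\sigma^i))\geq 1$ for $i=1,\ldots,n-1$, the orbifold $V^{\orb(\sigma)}$ is again a \strathol{} \voa{} of central charge $24$. Its weight-one dimension is computed by Theorem~\ref{thm:dimform}. To identify which entry has been produced one also needs the simple factors of $V_1^{\orb(\sigma)}$; these are read off from the $\sigma$-invariants of $V_1$ together with the twisted-sector ground states of conformal weight exactly one, and once the factors are known the levels $k_i$ are forced by relation~\eqref{eq:dm}, i.e.\ $h_i^\vee/k_i=(\dim(V_1^{\orb(\sigma)})-24)/24$. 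Matching this structure against Theorem~\ref{thm:schellekens} determines the entry uniquely.

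Concretely, for each target entry I would choose an appropriate source \voa{} and automorphism: most often an inner automorphism $\sigma=\e^{(2\pi\i)\ad_v}$ of a lattice \voa{} given by a rational vector $v\in V_1$, but also lifts of lattice isometries and, in several cases, iterated orbifolds in which the output of one orbifold is used as the input to the next. Automorphisms of orders $2,3,4,5,6$ and $8$ all occur. The single most delicate entry is $V_1=\{0\}$: this is the Moonshine module, realised as the order-two orbifold of the Leech lattice \voa{} by the lift of the $-1$ isometry, whose holomorphy and vanishing weight-one space go back to the original construction of \cite{FLM88}.

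The main obstacle is that no single argument covers the $47$ orbifold cases uniformly; each requires individual verification that the chosen automorphism satisfies the hypotheses of the construction — in particular the positivity $\rho(V(\sigma^i))\geq 1$, which can fail and then forces a different choice of $\sigma$ — and an exact determination of the resulting affine structure, since Theorem~\ref{thm:dimform} alone fixes only $\dim(V_1^{\orb(\sigma)})$ and distinct entries may share the same dimension. Arranging these constructions so that all $71$ entries are reached, and correctly tracking simple factors and levels through iterated orbifolds, is where the real effort lies.
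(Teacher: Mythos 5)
Your outline matches exactly what the paper does: the paper gives no proof of this theorem at all, but simply cites the body of work \cite{FLM88,DGM90,Don93,DGM96,Lam11,LS12a,LS15a,Miy13b,SS16,EMS15,Moe16,LS16a,LS16,LL16} and summarises its strategy as ``lattice \voa{}s and their (iterated) orbifolds by finite order automorphisms,'' which is precisely the two-part plan (24 Niemeier lattice \voa{}s plus 47 orbifold constructions, including $V^\natural$ as the $\Z_2$-orbifold of $V_\Lambda$) you describe. Your proposal is therefore the right roadmap, with the honest caveat you yourself state: the entire mathematical content lies in the 47 individual constructions and identifications carried out in those cited papers, which neither your sketch nor this paper reproduces.
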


An actual classification of \voa{}s can be obtained if the following uniqueness conjecture is proved:
\begin{conj}\label{conj:unique}
Let $V$ be a \strathol{} \voa{} of central charge $24$. Then the Lie algebra structure of $V_1$ uniquely determines the \voa{} $V$ up to isomorphism.
\end{conj}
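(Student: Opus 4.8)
The plan is to follow the inverse orbifold strategy outlined in the introduction and developed in \cite{LS16b}, thereby reducing the uniqueness of $V$ to the already-known uniqueness of a lattice \voa{} together with a conjugacy classification of automorphisms. Fix one of the fifteen Lie algebras and let $n\in\{2,4,5,6,8\}$ be the corresponding order, chosen so that $\Gamma_0(n)$ has genus zero and the cyclic orbifold theory of \cite{EMS15} together with the Dimension Formula (Theorem~\ref{thm:dimform}) apply. Given an arbitrary \strathol{} \voa{} $V$ of central charge $24$ with $V_1$ of the prescribed isomorphism type, I would first produce a suitable inner automorphism $\sigma=\e^{(2\pi\i)\ad_v}$, with $v\in V_1$ selected so that $\sigma$ has order $n$, is of type $n\{0\}$, and has fixed-point Lie algebra $V_1^\sigma$ of controlled structure. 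The crucial point is that for an inner automorphism the order, the type, the conformal weights $\rho(V(\sigma^i))$ of the twisted modules, and $V_1^\sigma$ are all computable from $V_1$ and its affine structure alone, and hence are independent of the global structure of the unknown $V$.

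The next step is to identify the orbifold $V^{\orb(\sigma)}$, which by \cite{EMS15} is again a \strathol{} \voa{} of central charge $24$. Applying the Dimension Formula computes $\dim\bigl(V_1^{\orb(\sigma)}\bigr)$ in terms of $\dim(V_1^\sigma)$ and the twisted conformal-weight data assembled above. Matching the resulting dimension against Schellekens' list — together with the rank and level constraints of Theorem~\ref{thm:schellekens} and, where the dimension alone does not suffice, a finer analysis of the fixed-point decomposition — pins down the affine structure of $V^{\orb(\sigma)}$ to a single case, which in each of the fifteen instances is that of a lattice \voa{} $V_L$ associated with a Niemeier lattice $L$. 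By the uniqueness of the $24$ Niemeier lattice \voa{}s \cite{DM04}, this identifies $V^{\orb(\sigma)}\cong V_L$ unambiguously.

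With the target fixed, the orbifold construction automatically supplies an inverse orbifold automorphism $\amgis\in\Aut(V_L)$ of order $n$ with $V\cong(V_L)^{\orb(\amgis)}$ and $(V_L)_1^{\amgis}\cong V_1^\sigma$ of known structure. Since $(V_L)^{\orb(\amgis)}$ depends only on the conjugacy class of $\amgis$ in $\Aut(V_L)$, the theorem follows once I show that $\amgis$ is determined up to conjugacy by its order and its fixed-point subalgebra. This is the main obstacle. I would reduce the classification of such automorphisms of $V_L$ to conjugacy classes of automorphisms of the Niemeier lattice $L$, using the description of $\Aut(V_L)$ as an extension of (a lift of) the lattice automorphism group $O(L)$ by inner automorphisms, and then enumerate the finitely many candidate classes of order $n$ whose fixed-point data reproduce the prescribed $V_1^\sigma$. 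Showing that exactly one such class survives — equivalently, that all inverse orbifold automorphisms with the given data are conjugate — is the computationally delicate heart of the argument, and is where I expect to need explicit calculations in the relevant automorphism groups, carried out with \texttt{Magma}.

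Assembling these pieces, every \strathol{} \voa{} $V$ of central charge $24$ with $V_1$ of the given type is realised as $(V_L)^{\orb(\amgis)}$ for the unique lattice $L$ and the unique conjugacy class of $\amgis$ produced above, so any two such $V$ are isomorphic. I would carry this out uniformly across the fifteen cases, recording the order $n$, the target lattice $L$, and the conjugacy analysis case by case in Sections~\ref{sec:part1}--\ref{sec:part3}.
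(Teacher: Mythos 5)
Your proposal reproduces, for the fifteen listed Lie algebras, exactly the three-part strategy the paper uses to prove Theorem~\ref{thm:main}: pick an inner automorphism $\sigma_h$ of type $n\{0\}$, use the genus zero dimension formula to force $V^{\orb(\sigma_h)}\cong V_L$ for a Niemeier lattice $L$, and then pin down the inverse orbifold automorphism up to conjugacy in $\Aut(V_L)$. However, the statement you were asked to prove is the full uniqueness conjecture, and neither your argument nor the paper establishes it: the method covers only $15$ of the $71$ affine structures, and it cannot in principle reach the case $V_1=\{0\}$ (there are no nontrivial inner automorphisms to orbifold by), nor the remaining open cases. As a proof of Conjecture~\ref{conj:unique} as stated, this is a fundamental gap of scope, not merely of detail.

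Within the fifteen cases there are two further points where your sketch asserts more than is true. First, you claim the conformal weights $\rho(V(\sigma^i))$ are ``computable from $V_1$ and its affine structure alone.'' They are not: they depend on which irreducible $\langle V_1\rangle$-modules actually occur in the decomposition of the unknown $V$, and the paper only obtains the lower bound $\rho(V(\sigma_h^i))\geq 1$ (Lemmas~\ref{lem:confgeqone0} and \ref{lem:confgeqone}); in cases $(11)$ and $(15)$ this requires solving the linear systems on multiplicities coming from modular invariance (and, in case $(15)$, a simple-current argument) to exclude problematic modules of twisted conformal weight $<1$. Second, you assert that the inverse orbifold automorphism is ``determined up to conjugacy by its order and its fixed-point subalgebra.'' This fails in cases $(1)$, $(10)$, $(11)$ and $(15)$ of Proposition~\ref{prop:part3}, where the coset computation over $\pi_\nu(Q')/\pi_\nu(L)$ leaves more than one candidate class and the additional invariant $\rho(V_L(g))=1$ must be imposed — and then separately verified for the inverse orbifold automorphism by checking $\rho(W^{(0,1)})=1$. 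Without these two repairs the argument does not close even for the fifteen cases.
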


In this text we will prove the uniqueness conjecture for fifteen cases (fourteen of them new)\footnote{The uniqueness of case $B_8E_{8,2}$ (case 62) has already been proved in \cite{LS15a} using framed \voa{}s.} on Schellekens' list (see Theorem~\ref{thm:main}).

\subsection{Lattice \VOA{}s}
To keep the exposition self-contained we recall a few well-known facts about lattice \voa{}s \cite{Bor86,FLM88,Don93}.

Let $L$ be a positive-definite, even lattice, i.e.\ a free abelian group $L$ of finite rank~$r$ equipped with a positive-definite, symmetric bilinear form $\langle\cdot,\cdot\rangle\colon L\times L\to\Z$ such that the norm $\langle\alpha,\alpha\rangle/2$ is an integer for all $\alpha\in L$. We denote by $\Aut(L)=O(L)$ the group of automorphisms (or isometries) of the lattice $L$. The lattice \voa{} $V_L$ constructed from $L$ is simple, \strat{}, has central charge $c=r$ and carries a grading $V_L=\bigoplus_{\alpha\in L}V_L(\alpha)$ by $L$. If $L$ is unimodular, then $V_L$ is holomorphic.

The complexified lattice $\h:=L\otimes_\Z\C$ can be naturally identified with $\hh:=\{h(-1)\otimes\ee_0\,|\,h\in\h\}\subseteq(V_L)_1$, which is a Cartan subalgebra for the reductive Lie algebra $(V_L)_1$.

The construction of $V_L$ involves a choice of group $2$-cocycle $\eps\colon L\times L\to\{\pm 1\}$. An automorphism $\nu\in\Aut(L)$ together with a function $\eta\colon L\to\{\pm 1\}$ satisfying
\begin{equation*}
\eta(\alpha)\eta(\beta)/\eta(\alpha+\beta)=\eps(\alpha,\beta)/\eps(\nu\alpha,\nu\beta)
\end{equation*}
defines an automorphism $\hat{\nu}\in\Aut(V_L)$ (see, e.g. \cite{FLM88,Bor92}). We call $\hat{\nu}$ a \emph{standard lift} if the restriction of $\eta$ to the fixed-point sublattice $L^\nu\subseteq L$ is trivial. All standard lifts of $\nu$ are conjugate in $\Aut(V_L)$ (see \cite{EMS15}, Proposition~7.1). Let $\hat{\nu}$ be a standard lift of $\nu$ and suppose that $\nu$ has order $m$. If $m$ is odd or if $m$ is even and $\langle\alpha,\nu^{m/2}\alpha\rangle$ is even for all $\alpha\in L$, then the order of $\hat{\nu}$ is also $m$. Otherwise the order of $\hat{\nu}$ is $2m$, in which case we say $\nu$ exhibits order doubling.

There are precisely $24$ positive-definite, even, unimodular lattices of rank $24$, the Niemeier lattices. Each is determined up to isomorphism by its root lattice, and we write $N(Q)$ for the Niemeier lattice with non-zero root lattice $Q$. The one remaining Niemeier lattice $\Lambda$ with zero root lattice is called the Leech lattice.

\subsection{Inner Automorphisms}
We describe inner automorphisms of \voa{}s and the twisted modules associated with them. This summary is based on \cite{Li96}, Section~5. Let $V$ be a \voa{} of CFT-type. We define $K:=\langle\{\e^{v_0}\,|\,v\in V_1\}\rangle\subseteq\Aut(V)$ to be the \emph{inner automorphism group} of $V$. Note that since $V$ is of CFT-type, $v_0\omega=0$ for all $v\in V_1$ so that the inner automorphisms preserve the Virasoro vector $\omega$, which is included in our definition of \voa{} automorphism.

Let $v\in V_1$ such that the zero mode $v_0$ acts semisimply on $V$. Then the inner automorphism $\sigma_v:=\e^{-(2\pi\i)v_0}$ fulfils $\sigma_v^T=\id$ for some $T\in\Ns$ if and only if all eigenvalues of $v_0$ lie in $(1/T)\Z$.

Note that by definition of the Lie algebra structure on $V_1$, $\sigma_v$ restricts to the inner automorphism $\e^{-(2\pi\i)\ad_v}\in\Inn(V_1)\subseteq\Aut(V_1)$ of $V_1$.

Now, let $\sigma\in\Aut(V)$ be an automorphism of $V$ such that $\sigma v=v$. Then $\sigma$ and $\sigma_v$ commute.
%
We also assume that $L_1v=0$. If $V$ is self-contragredient and of CFT-type, then by results in \cite{Li94}, $L_1V_1=\{0\}$ and the assumption holds for any $v\in V_1$.

Suppose that $M=(M,Y_M)$ is a $\sigma$-twisted $V$-module. We consider Li's operator
\begin{equation*}
\Delta_v(z):=z^{v_0}\exp\left(\sum_{n=1}^\infty\frac{v_n}{-n}(-z)^{-n}\right)
\end{equation*}
and define $M^{(v)}=(M^{(v)},Y_{M^{(v)}})$ as $M^{(v)}:=M$ (as vector space) and
\begin{equation*}
Y_{M^{(v)}}(a,z):=Y_M(\Delta_v(z)a,z)
\end{equation*}
for all $a\in V$. Then $M^{(v)}$ is a $\sigma_v\sigma$-twisted $V$-module, which is irreducible if and only if $M$ is.\footnote{Note that we use the modern sign convention in the definition of twisted modules (see, e.g.\ \cite{DLM00} as opposed to \cite{Li96}). This is why we included a minus sign in the definition of $\sigma_v=\e^{-(2\pi\i)v_0}$.}

The $L_0$-grading of $M^{(v)}$ is shifted in comparison to $M$ via
\begin{equation*}
L_0^{M^{(v)}}=L_0^M+v_0+\frac{1}{2}(v_1v)_{-1}
\end{equation*}
where the $v_n$, $n\in\Z$, denote the modes of the unmodified module vertex operator $Y_M(\cdot,z)$.
If $V$ is self-contragredient, then we can rewrite the last term and obtain
\begin{equation}\label{eq:weights}
L_0^{M^{(v)}}=L_0^M+v_0+\frac{1}{2}\langle v,v\rangle\id_M
\end{equation}
using the normalised, symmetric, invariant bilinear form $\langle\cdot,\cdot\rangle$ on $V$.


\subsection{Kac's Theory}\label{sec:kac}
In a beautiful theory Kac classifies the finite-order automorphisms of finite-dimensional, simple Lie algebras and their fixed-point Lie subalgebras (see
\cite{Kac90}, Chapter~8). We briefly describe the results relevant for this work. Let $\g$ be a finite-dimensional, simple Lie algebra of type $X_n$. Then the conjugacy class of a finite-order automorphism of $\g$ is uniquely determined by the following data:
\begin{enumerate}
\item an affine Dynkin diagram of type $X_n^{(k)}$ where $k\in\{1,2,3\}$,
\item a sequence of non-negative, relatively prime integers $s_0,\ldots,s_l$ associated with the nodes of the affine Dynkin diagram where we consider two sequences identical if they can be transformed into each other by a diagram automorphism.
\end{enumerate}
In the case $k=1$ (untwisted affine Dynkin diagram) the automorphism of $\g$ is inner, and if $k=2,3$ (twisted affine Dynkin diagram), then the automorphism is outer, i.e. it projects to a non-trivial diagram automorphism of $X_n$ of order $k$. The order of the automorphism of $\g$ is given by $k\sum_{i=0}^la_is_i$ with the Kac labels\footnote{See \cite{Kac90}, Tables Aff~1, Aff~2 and Aff~3. For the untwisted affine Dynkin diagrams, for example, these are given by the ``marks'' described in Section~\ref{sec:la} complemented with $a_0=1$.} $a_i$.

Moreover, the fixed-point Lie subalgebra of $\g$ is isomorphic to the semisimple Lie algebra corresponding to the subdiagram of $X_n^{(k)}$ of those nodes with vanishing $s_i$ plus an abelian part of dimension equal to the number of nodes with non-vanishing $s_i$ minus 1.

It is even possible to state when two inner automorphisms are not just conjugate but conjugate under an inner automorphism. This is the case if and only if the associated integers $s_i$ can be transformed into one another by an element in a certain subgroup of the automorphism group of the untwisted affine Dynkin diagram.

Two conjugate outer automorphisms are always conjugate under an inner automorphism, except for the simple type $D_4$ (see Lemma~\ref{lem:innerouter}).

It is straightforward to extend Kac's theory to semisimple or reductive Lie algebras.

\section{Orbifold Construction}\label{sec:orbifold}
In \cite{EMS15} (see also \cite{Moe16}) we develop a cyclic orbifold theory for holomorphic \voa{}s. Here we give a short summary. Let $V$ be a \strathol{} \voa{} (of central charge $c\in8\Ns$) and $G=\langle\sigma\rangle$ a finite, cyclic group of automorphisms of $V$ of order $n\in\Ns$.

By \cite{DLM00} there is an up to isomorphism unique irreducible $\sigma^i$-twisted $V$-module, which we call $V(\sigma^i)$, for each $i\in\Z_n$. Moreover, there is a representation $\phi_i\colon G\to\Aut_\C(V(\sigma^i))$ of $G$ on the vector space $V(\sigma^i)$ such that
\begin{equation*}
\phi_i(\sigma)Y_{V(\sigma^i)}(v,x)\phi_i(\sigma)^{-1}=Y_{V(\sigma^i)}(\sigma v,x)
\end{equation*}
for all $i\in\Z_n$, $v\in V$. This representation is unique up to an $n$-th root of unity. Let $\xi_n$ be the $n$-th root of unity $\xi_n:=\e^{2\pi\i/n}$. We denote the eigenspace of $\phi_i(\sigma)$ in $V(\sigma^i)$ corresponding to the eigenvalue $\xi_n^j$ by $W^{(i,j)}$. On $V(\sigma^0)=V$ a possible choice for $\phi_0$ is given by $\phi_0(\sigma)=\sigma$.

The \fpvosa{} $V^\sigma=W^{(0,0)}$ is simple and \strat{} by \cite{DM97,Miy15,CM16} and has exactly $n^2$ irreducible modules, namely the $W^{(i,j)}$, $i,j\in\Z_n$ \cite{MT04}. We further show that the conformal weight of $V(\sigma)$ is in $(1/n^2)\Z$, and we define the type $n\{t\}$ of $\sigma$ where $t\in\Z_n$ such that $t=n^2\rho(V(\sigma))\pmod{n}$.

Let us assume in the following that $\sigma$ has type $n\{0\}$, i.e.\ that $\rho(V(\sigma))\in(1/n)\Z$. In this case it is possible to choose the representations $\phi_i$ such that the conformal weights obey
\begin{equation*}
\rho(W^{(i,j)})\in\frac{ij}{n}+\Z
\end{equation*}
and $V^\sigma$ has fusion rules
\begin{equation*}
W^{(i,j)}\boxtimes W^{(l,k)}\cong W^{(i+l,j+k)}
\end{equation*}
for all $i,j,k,l\in\Z_n$ (see \cite{EMS15}, Section~5), i.e.\ the fusion ring of $V^\sigma$ is the group ring $\C[\Z_n\times\Z_n]$. In particular, all $V^\sigma$-modules are simple currents.

We also show that the characters of the irreducible $V^\sigma$-modules
\begin{equation*}
\ch_{W^{(i,j)}}(\tau)=\tr_{W^{(i,j)}}q^{L_0-c/24},
\end{equation*}
$i,j\in\Z_n$, $q=\e^{2\pi\i\tau}$, $\tau$ a complex variable in the upper half-plane $\H$, are modular forms of weight~0 for $\Gamma(n)$ if $24\mid c$ and for $\Gamma(\lcm(3,n))$ otherwise. They form a vector-valued modular form for the Weil representation of $\SLZ$ (times a character, which is trivial for $24\mid c$).

In general, a simple \voa{} $V$ is said to satisfy the \emph{positivity assumption} if the conformal weight $\rho(W)>0$ for any irreducible $V$-module $W\not\cong V$ and $\rho(V)=0$.

Now, if $V^\sigma$ satisfies the positivity assumption, then the direct sum of $V^\sigma$-modules
\begin{equation}\label{eq:orbifold}
V^{\orb(\sigma)}:=\bigoplus_{i\in\Z_n}W^{(i,0)}
\end{equation}
admits the structure of a \strathol{} \voa{} of the same central charge as $V$ and is called \emph{orbifold} of $V$ with respect to $\sigma$ \cite{EMS15}. Note that $\bigoplus_{j\in\Z_n}W^{(0,j)}$ is just the old \voa{} $V$.

\subsection{Inverse Orbifold}\label{sec:invorb}
The inverse orbifold was first described in \cite{EMS15}. Suppose that the \strathol{} \voa{} $V^{\orb(\sigma)}$ is obtained as an orbifold as described above. Then via $\amgis v:=\xi_n^iv$ for $v\in W^{(i,0)}$ we define an automorphism $\amgis$ of $V^{\orb(\sigma)}$ of order~$n$, and the unique irreducible $\amgis^j$-twisted $V^{\orb(\sigma)}$-module is given by $V^{\orb(\sigma)}(\amgis^j)=\bigoplus_{i\in\Z_n}W^{(i,j)}$, $j\in\Z_n$ (see \cite{Moe16}, Theorem~4.9.6, for the proof). Then
\begin{equation*}
(V^{\orb(\sigma)})^{\orb(\amgis)}=\bigoplus_{j\in\Z_n}W^{(0,j)}=V,
\end{equation*}
i.e.\ orbifolding with $\amgis$ is inverse to orbifolding with $\sigma$.

\section{Dimension Formulae}\label{sec:dimform}

The modular properties of the characters of the irreducible $V^\sigma$-modules can be used to derive a formula for the dimension of the weight-one space $V^{\orb(\sigma)}_1$ if $c=24$ and if $n$ is such that the modular curve $X_0(n)=\Gamma_0(n)\backslash\H^*$ has genus zero, in which case $\Gamma_0(n)$ is called a genus zero group. (Here $\Gamma:=\SLZ$, $\Gamma_0(n)=\left\{\left(\begin{smallmatrix}a&b\\c&d\end{smallmatrix}\right)\in\Gamma\,|\,c=0\pmod{n}\right\}$ and $\H^*=\H\cup P$ with the complex upper half-plane $\H$ and the cusps $P=\Q\cup\{\infty\}$.) The group $\Gamma_0(n)$ has genus zero if and only if $n=1,2,3,4,5,6,7,8,9,10,12,13,16,18,25$. Then, in particular, the field of modular functions, i.e.\ the meromorphic modular forms of weight~0, for the congruence subgroup $\Gamma_0(n)$ is generated by a single meromorphic function $t_n$ called a \emph{Hauptmodul}. This means that every modular function for $\Gamma_0(n)$ is a rational function of this Hauptmodul, i.e.\ lies in $\C(t_n)$.

The main result of this section is the following theorem:
\begin{thm}[Dimension Formula]\label{thm:dimform}
Let $V$ be a \strathol{} \voa{} of central charge 24 and $\sigma$ an automorphism of $V$ of type $n\{0\}$, and assume that $V^\sigma$ satisfies the positivity assumption. Let $V^{\orb(\sigma)}=\bigoplus_{i\in\Z_n}W^{(i,0)}$ be the orbifold \voa{}~\eqref{eq:orbifold}. Suppose $\Gamma_0(n)$ has genus zero. Then
\begin{equation*}
\sum_{d\mid n}\frac{\phi((d,n/d))}{(d,n/d)}\left(24+\frac{n}{d}\dim(V_1^\sigma)-\dim(V_1^{\orb(\sigma^d)})\right)=24+R
\end{equation*}
holds where $\phi$ is Euler's totient function and
\begin{equation*}
R=\frac{24}{\phi(n)}\sum_{k=1}^{n-1}\sum_{\substack{i,j\in\Z_n\\ij=k\!\!\!\!\pmod{n}}}d_{i,j,k}\dim(W^{(i,j)}_{k/n})
\end{equation*}
where the coefficients $d_{i,j,k}$ are certain non-negative integers (given in Remark~\ref{rem:R}).
\end{thm}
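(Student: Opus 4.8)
The plan is to read the left-hand side as the statement that a \emph{single} weight-zero modular function for $\Gamma_0(n)$, assembled from the fixed-point characters, is reconstructed from its polar parts via the genus-zero hypothesis. The natural candidate is $F(\tau):=\ch_{V^\sigma}(\tau)=\ch_{W^{(0,0)}}(\tau)$. First I would show that $F$ is a modular function of weight zero for $\Gamma_0(n)$, holomorphic on $\H$ with poles only at the cusps. The input from \cite{EMS15} is that the $\ch_{W^{(i,j)}}$ span a vector-valued modular form for the Weil representation of $\SLZ$ attached to the finite quadratic space $\Z_n\times\Z_n$ with quadratic form $(i,j)\mapsto ij/n\bmod 1$, and that the accompanying character is trivial since $24\mid c$. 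The subgroup $\Z_n\times\{0\}$ indexing the modules $W^{(i,0)}$ of $V^{\orb(\sigma)}$ is a maximal isotropic (Lagrangian) subgroup, and $\Gamma_0(n)$ is (a conjugate of) its stabiliser; hence the component $e_{(0,0)}$ is fixed up to the now-trivial multiplier, giving $F(\gamma\tau)=F(\tau)$ for all $\gamma\in\Gamma_0(n)$. The positivity assumption guarantees that $\rho(W^{(i,j)})>0$ for $(i,j)\neq(0,0)$, so the only genuine pole on $\H^*$ is the order-one pole of $\ch_{V^\sigma}$ at $\infty$; at $\infty$ the constant ($q^0$) coefficient of $F$ is exactly $\dim(V_1^\sigma)$.

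Second, I would compute the expansion of $F$ at each cusp. The cusps of $\Gamma_0(n)$ are indexed by the divisors $d\mid n$, there being $\phi((d,n/d))$ of them with denominator $d$, each of width $(n/d)/(d,n/d)$; this is the source both of the weight $\phi((d,n/d))/(d,n/d)$ and of the factor $n/d$ multiplying $\dim(V_1^\sigma)$. Choosing $\gamma_d\in\SLZ$ sending $\infty$ to a cusp of denominator $d$, the Weil-representation image of $e_{(0,0)}$ under $\gamma_d$ is supported on the Lagrangian subgroup $H_d=\{(da,(n/d)b)\mid a\in\Z_{n/d},\,b\in\Z_d\}$, and the corresponding partial sum $\sum_{(i,j)\in H_d}\ch_{W^{(i,j)}}$ is precisely $\ch_{V^{\orb(\sigma^d)}}$, because $V^{\orb(\sigma^d)}=\bigoplus_{(i,j)\in H_d}W^{(i,j)}$. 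Since $V^{\orb(\sigma^d)}$ is \strathol{} of central charge $24$, this partial sum equals $J+\dim(V_1^{\orb(\sigma^d)})$, where $J=q^{-1}+\dotsb$ is the normalised modular invariant of $\SLZ$ with vanishing constant term. Its leading term, reflecting $c=24$, contributes the constant $24$ after rescaling by the width; its $q^0$ term contributes $\dim(V_1^{\orb(\sigma^d)})$; and its fractional (polar-in-the-local-parameter) terms contribute exactly the twisted dimensions $\dim(W^{(i,j)}_{k/n})$ with $ij\equiv k\pmod n$ that are collected into $R$.

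Third, I would invoke the genus-zero property: a weight-zero modular function for $\Gamma_0(n)$ that is holomorphic on $\H$ lies in $\C(t_n)$ and is determined, up to an additive constant, by its principal parts at the finitely many cusps. Matching the principal parts of $F$ against the Hauptmodul $t_n$ and its translates, and then comparing the constant ($q^0$) coefficients computed at $\infty$ with those forced at the remaining cusps, yields a \emph{single} scalar identity. Separating this identity into its central-charge part (the constant $24$), its weight-one part (assembled, using the cusp widths $(n/d)/(d,n/d)$ and multiplicities $\phi((d,n/d))$, into the combination $24+\tfrac{n}{d}\dim(V_1^\sigma)-\dim(V_1^{\orb(\sigma^d)})$), and its sub-leading twisted part (collected into $R$) produces the stated formula; the non-negative integers $d_{i,j,k}$ are the resulting combinatorial factors recorded in Remark~\ref{rem:R}.

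The main obstacle is not the genus-zero step, which is structurally clean, but the first two steps: proving that $F$ is genuinely modular for the full group $\Gamma_0(n)$ with trivial multiplier (rather than merely for $\Gamma(n)$), and pinning down the exact cusp widths together with the Weil-representation scalars at every cusp so that the bookkeeping reproduces the precise rational coefficients $\phi((d,n/d))/(d,n/d)$, the constant $24$, and the integers $d_{i,j,k}$. This requires the explicit evaluation of the Weil representation on $e_{(0,0)}$ at each cusp, and it is there that the real care lies.
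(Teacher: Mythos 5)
Your overall architecture matches the paper's: show that $\ch_{V^\sigma}$ is a weight-zero modular function for $\Gamma_0(n)$, expand it at the cusps, relate those expansions to the characters $\ch_{V^{\orb(\sigma^d)}}$, and use the genus-zero hypothesis to compare principal parts and constant terms. But two of your central computational claims are false, and the second is exactly where the content of the theorem lies. First, $\ch_{V^\sigma}$ does \emph{not} have its only pole at $\infty$: its expansion at any cusp $a/c$ contains the term $\frac{c}{n}\ch_{W^{(0,0)}}$, so it has a pole of order $1$ in $q$ (order $t_s$ in the local parameter) at \emph{every} cusp. This is why the paper must introduce, for each cusp $s$, a function $f_s$ with pole only at $s$ and a polynomial $g_s$ of degree $t_s$, rather than a single polynomial in the Hauptmodul; your first paragraph contradicts your own second and third paragraphs on this point.

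Second, the image of $e_{(0,0)}$ under the Weil representation at a cusp of denominator $c$ is \emph{not} supported on the Lagrangian $H_c$: one has $\ch_{W^{(0,0)}}(M.\tau)=\frac{c}{n}\sum_{i,j\in\Z_{n/c}}\e^{-(2\pi\i)dcij/n}\ch_{W^{(ci,cj)}}(\tau)$, supported on all of $c\Z_n\times c\Z_n$ with nontrivial phases. Averaging over the $t_s$ cosets above the cusp kills only the components with $ij\neq 0\pmod{t_s}$, which still leaves components outside $H_c$ (for instance $W^{(2,2)}$ survives at the cusp $0$ when $n=4$, although it does not occur in $V^{\orb(\sigma)}$), and the identity $\sum_{s}F_s=\sum_{d\mid n}\frac{\phi((d,n/d))}{(d,n/d)}\ch_{V^{\orb(\sigma^d)}}$ (the paper's Proposition~\ref{prop:stt}) only emerges after summing over \emph{all} cusps; no individual cusp expansion is proportional to a single $\ch_{V^{\orb(\sigma^d)}}$. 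Your shortcut also destroys the source of $R$: if the local expansion at a cusp really were $\ch_{V^{\orb(\sigma^d)}}=J+\dim(V_1^{\orb(\sigma^d)})$, it would have an integral $q$-expansion and no ``fractional terms'' at all, so the dimensions $\dim(W^{(i,j)}_{k/n})$ could never enter. In the actual proof $R$ arises from the components $W^{(i,j)}$ with $ij\not\equiv 0\pmod{n}$ of conformal weight less than $1$, which contribute to the principal parts at the cusps, hence to the coefficients of the polynomials $g_s$, hence to the constant terms. As written, the proposal would not reproduce the coefficients $\phi((d,n/d))/(d,n/d)$ nor the correction term $R$.
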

Note that if $\sigma$ is of type $n\{0\}$, then for any $d\mid n$ the automorphism $\sigma^d$ has type $(n/d)\{0\}$ so that the orbifold \voa{} $V^{\orb(\sigma^d)}$ exists.
\begin{rem}\label{rem:sumcuspwidths}
The sum of the coefficients of $\dim(V^\sigma_1)$ on the left-hand side is the the sum of all cusp widths for $\Gamma_0(n)$, which is equal to the Dedekind psi function
\begin{equation*}
\psi(n)=\sum_{d\mid n}\frac{n}{d}\frac{\phi((d,n/d))}{(d,n/d)}=n\prod_{\substack{p\mid n\\p\text{ prime}}}(1+1/p)
\end{equation*}
and, since $\Gamma_0(n)$ contains the negative of the identity matrix, also equals $[\Gamma:\Gamma_0(n)]$.
\end{rem}

The dimension formulae for $n=2,3$ are given in \cite{Mon94} 
(see also \cite{LS16b}, Theorem~4.8). For $n=2,3,5,7,13$ a proof is given in \cite{Moe16}, Proposition~4.10.4. A complete proof of the theorem will be given in Section~\ref{sec:dimproof}.

The theorem simplifies if the conformal weights satisfy $\rho(V(\sigma^i))\geq 1$ for all $i\in\Z_n\setminus\{0\}$ since then $R=0$.
\begin{cor}\label{cor:dimform}
In the situation of the above theorem let us assume that the conformal weights of the twisted modules obey $\rho(V(\sigma^i))\geq 1$ for all $i\in\Z_n\setminus\{0\}$. Then
\begin{equation*}
\dim(V^{\orb(\sigma)}_1) = 24 + \sum_{d\mid n}c_d \dim(V_1^{\sigma^d})
\end{equation*}
with
\begin{equation*}
c_d=\frac{\lambda(d)}{d}\,\frac{\phi((d,n/d))}{(d,n/d)}\,\psi(n/d)
\end{equation*}
where
\begin{equation*}
\lambda(d)=\prod_{\substack{p\text{ prime}\\p\mid d}}(-p).
\end{equation*}
Explicitly, the coefficients are given by:
\begin{equation*}
\renewcommand{\arraystretch}{1.2}
\begin{tabular}{r|l}
 $n$& $c_d$\\\hline
 $2$& $c_1=3$, $c_2=-1$\\
 $3$& $c_1=4$, $c_3=-1$\\
 $4$& $c_1=6$, $c_2=-\frac{3}{2}$, $c_4=-\frac{1}{2}$\\
 $5$& $c_1=6$, $c_5=-1$\\
 $6$& $c_1=12$, $c_2=-4$, $c_3=-3$, $c_6=1$\\
 $7$& $c_1=8$, $c_7=-1$\\
 $8$& $c_1=12$, $c_2=-3$, $c_4=-\frac{3}{4}$, $c_8=-\frac{1}{4}$\\
 $9$& $c_1=12$, $c_3=-\frac{8}{3}$, $c_9=-\frac{1}{3}$\\
$10$& $c_1=18$, $c_2=-6$, $c_5=-3$, $c_{10}=1$\\
$12$& $c_1=24$, $c_2=-6$, $c_3=-6$, $c_4=-2$, $c_6=\frac{3}{2}$, $c_{12}=\frac{1}{2}$\\
$13$& $c_1=14$, $c_{13}=-1$\\
$16$& $c_1=24$, $c_2=-6$, $c_4=-\frac{3}{2}$, $c_8=-\frac{3}{8}$, $c_{16}=-\frac{1}{8}$\\
$18$& $c_1=36$, $c_2=-12$, $c_3=-8$, $c_6=\frac{8}{3}$, $c_9=-1$, $c_{18}=\frac{1}{3}$\\
$25$& $c_1=30$, $c_5=-\frac{24}{5}$, $c_{25}=-\frac{1}{5}$
\end{tabular}
\renewcommand{\arraystretch}{1}
\end{equation*}
\end{cor}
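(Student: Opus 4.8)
The plan is to obtain the corollary as a formal consequence of Theorem~\ref{thm:dimform}, applied to $\sigma$ and to all of its powers, in two steps: the vanishing of the correction term $R$, followed by an inversion of the resulting family of dimension identities.

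\emph{Vanishing of $R$.} In the formula for $R$ the outer index $k$ runs over $1,\ldots,n-1$, and each summand carries a factor $\dim(W^{(i,j)}_{k/n})$ with $ij\equiv k\pmod n$; as $k\neq 0$ this forces $i\neq 0$. Since $W^{(i,j)}$ is a graded subspace of the irreducible $\sigma^i$-twisted module $V(\sigma^i)$, we have $W^{(i,j)}_{k/n}\subseteq V(\sigma^i)_{k/n}$, and the latter is zero whenever $k/n<\rho(V(\sigma^i))$. Because $k\leq n-1$ gives $k/n<1$ while the hypothesis gives $\rho(V(\sigma^i))\geq 1$ for every $i\in\{1,\ldots,n-1\}$, all these weight spaces vanish, so $R=0$ and Theorem~\ref{thm:dimform} reduces to
\begin{equation*}
\sum_{d\mid n}\frac{\phi((d,n/d))}{(d,n/d)}\Bigl(24+\tfrac{n}{d}\dim(V_1^\sigma)-\dim(V_1^{\orb(\sigma^d)})\Bigr)=24.
\end{equation*}

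\emph{Reduction to a triangular system.} I would apply this reduced identity not only to $\sigma$ but to each power $\sigma^e$, $e\mid n$. This is legitimate: $\sigma^e$ has type $(n/e)\{0\}$ as noted after the theorem, $\Gamma_0(n/e)$ again has genus zero (as is checked directly from the explicit list of genus-zero levels), the positivity assumption passes from $V^\sigma$ to $V^{\sigma^e}\supseteq V^\sigma$, and $R=0$ persists because $\rho(V(\sigma^{ei}))\geq 1$ for the relevant indices $i$. Writing $m=n/e$ for the order of $\sigma^e$ and noting that the fixed-point dimensions of its powers, $\dim(V_1^{(\sigma^e)^j})=\dim(V_1^{\sigma^{ej}})$, are again of the form $\dim(V_1^{\sigma^d})$, the instance for $\sigma^e$ expresses $\dim(V_1^{\orb(\sigma^e)})$ through $\dim(V_1^{\sigma^e})$ and the strictly lower-order orbifold dimensions $\dim(V_1^{\orb(\sigma^{ed'})})$ with $d'\mid m$, $d'>1$. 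Ordered by the order of the automorphism this is a triangular linear system whose diagonal coefficient is $\phi((1,m))/(1,m)=1$, with base case $\dim(V_1^{\orb(\sigma^n)})=\dim(V_1)=\dim(V_1^{\sigma^n})$ coming from $\sigma^n=\id$.

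\emph{Inversion.} I would then prove by strong induction on the order $m$ the self-similar statement that, for any order-$m$ automorphism $\tau$ meeting the hypotheses ($\Gamma_0(m)$ of genus zero, $\rho(V(\tau^i))\geq 1$), one has
\begin{equation*}
\dim(V_1^{\orb(\tau)})=24+\sum_{d\mid m}c^{[m]}_d\,\dim(V_1^{\tau^d}),\qquad c^{[m]}_d=\frac{\lambda(d)}{d}\,\frac{\phi((d,m/d))}{(d,m/d)}\,\psi(m/d),
\end{equation*}
for $m\geq 2$, with base case $\dim(V_1^{\orb(\id)})=\dim(V_1)$ for $m=1$; the corollary is the instance $m=n$. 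In the induction step one substitutes the formulae for the proper divisors of $m$ (using $\dim(V_1^{(\tau^d)^j})=\dim(V_1^{\tau^{dj}})$ to keep everything of the form $\dim(V_1^{\tau^d})$) into the reduced identity and checks that the coefficient of each $\dim(V_1^{\tau^d})$ collapses to $c^{[m]}_d$. This amounts to a purely arithmetic identity among the multiplicative functions $\phi$, $\psi$, $\lambda$ and the cusp-width weights $\phi((d,m/d))/(d,m/d)$. Since each of these factors under coprime decompositions of the pair $(d,m)$, I expect the identity to factor over the prime-power divisors of $m$, reducing to a short finite check on each $m=p^a$. The main obstacle is exactly this inversion: the weights depend on the order $m$ through a greatest common divisor, so the system is not a plain Dirichlet convolution and the bookkeeping has to be organised through this multiplicative reduction. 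Since only the fifteen genus-zero values of $n$ arise, one may alternatively solve each of the fifteen small triangular systems directly, which establishes the result in the required cases and simultaneously confirms the tabulated coefficients against the closed formula.
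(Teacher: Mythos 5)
Your proposal is correct and follows essentially the same route as the paper: the observation that $R=0$ because each $W^{(i,j)}_{k/n}$ with $k/n<1$ sits inside $V(\sigma^i)_{k/n}$, which vanishes under the hypothesis $\rho(V(\sigma^i))\geq 1$, followed by recursive application of Theorem~\ref{thm:dimform} to the powers $\sigma^e$, $e\mid n$, and inversion of the resulting triangular system. The paper compresses all of this into ``recursion of the dimension formula gives the coefficients $c_d$; then it is a simple check that they satisfy the stated formula,'' which is exactly your alternative of solving the finitely many small systems and comparing with the closed expression.
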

\begin{proof}
Recursion of the dimension formula in Theorem~\ref{thm:dimform} gives the coefficients $c_d$. Then it is a simple check that they satisfy the stated formula.  
\end{proof}
Observe that $c_1=\psi(n)$ and $\sum_{d\mid n}c_d=n$.

\begin{rem}\label{rem:R}
We describe the coefficients $d_{i,j,k}$ in the term $R$ in Theorem~\ref{thm:dimform}. They appear for $i,j,k\in\{1,\ldots,n-1\}$ with $k=ij\pmod{n}$. If $n$ is prime, then
\begin{equation*}
d_{i,j,k}=\sigma(n-k)
\end{equation*}
where $\sigma$ is the sum-of-divisors function $\sigma(m)=\sum_{d|m}d$.

For any $n$, if $(i,j,n)=1$, then
\begin{equation*}
d_{i,j,k}=\sum_{\substack{d\mid(n-k)\\(d,n)=1}}\frac{n-k}{d},
\end{equation*}
describing in particular all coefficients in the cases where $n$ is the square of a prime. This only leaves a few further coefficients, which we list in the following:
\begin{equation*}
\renewcommand{\arraystretch}{1.1}
\begin{minipage}[t]{0.5\textwidth}
\centering
\begin{tabular}[t]{c|c|c|c}
$n$ & $(i,j,n)$ & $ij$ & $d_{i,j,k}$\\\hline
\multirow{ 3}{*}{$\s6$}&\multirow{2}{*}{$2$}&$\s2\pmod{\s6}$&$\s5$\\
                       &                    &$\s4\pmod{\s6}$&$\s1$\\\cline{2-4}
                       &                $3$ &$\s3\pmod{\s6}$&$\s2$\\\hline
\multirow{ 2}{*}{$\s8$}&\multirow{2}{*}{$2$}& $\s4\pmod{16}$&$\s2$\\
                       &                    &  $12\pmod{16}$&$\s6$\\\hline
\multirow{ 5}{*}{ $10$}&\multirow{4}{*}{$2$}& $\s2\pmod{10}$& $13$\\
                       &                    & $\s4\pmod{10}$&$\s4$\\
                       &                    & $\s6\pmod{10}$&$\s5$\\
                       &                    & $\s8\pmod{10}$&$\s1$\\\cline{2-4}
                       &                $5$ & $\s5\pmod{10}$&$\s4$\\\hline
\multirow{ 9}{*}{ $12$}&\multirow{4}{*}{$2$}& $\s4\pmod{24}$&$\s4$\\
                       &                    & $\s8\pmod{24}$&$\s2$\\
                       &                    &  $16\pmod{24}$& $12$\\
                       &                    &  $20\pmod{24}$&$\s6$\\\cline{2-4}
                       &\multirow{3}{*}{$3$}& $\s3\pmod{12}$& $14$\\
                       &                    & $\s6\pmod{12}$&$\s4$\\
                       &                    & $\s9\pmod{12}$&$\s2$\\\cline{2-4}
                       &\multirow{2}{*}{$4$}& $\s4\pmod{12}$&$\s8$\\
                       &                    & $\s8\pmod{12}$&$\s2$
\end{tabular}
\end{minipage}
\begin{minipage}[t]{0.5\textwidth}
\centering
\begin{tabular}[t]{c|c|c|c}
$n$ & $(i,j,n)$ & $ij$ & $d_{i,j,k}$\\\hline
\multirow{ 6}{*}{ $16$}&\multirow{6}{*}{$2$}&$\s4\pmod{32}$&$\s8$\\
                       &                    &$\s8\pmod{32}$&$\s4$\\
                       &                    & $12\pmod{32}$&$\s2$\\
                       &                    & $20\pmod{32}$& $24$\\
                       &                    & $24\pmod{32}$& $12$\\
                       &                    & $28\pmod{32}$&$\s6$\\\hline
\multirow{12}{*}{ $18$}&\multirow{8}{*}{$2$}&$\s2\pmod{18}$& $29$\\
                       &                    &$\s4\pmod{18}$&$\s8$\\
                       &                    &$\s6\pmod{18}$& $15$\\
                       &                    &$\s8\pmod{18}$&$\s6$\\
                       &                    & $10\pmod{18}$& $13$\\
                       &                    & $12\pmod{18}$&$\s3$\\
                       &                    & $14\pmod{18}$&$\s5$\\
                       &                    & $16\pmod{18}$&$\s1$\\\cline{2-4}
                       &\multirow{3}{*}{$3$}&$\s9\pmod{54}$&$\s6$\\
                       &                    & $27\pmod{54}$&$\s6$\\
                       &                    & $45\pmod{54}$& $15$\\\cline{2-4}
                       &                $9$ &$\s9\pmod{18}$&$\s6$
\end{tabular}
\end{minipage}
\renewcommand{\arraystretch}{1}
\end{equation*}
Note that the coefficients obey the symmetries $d_{i,j,k}=d_{j,i,k}=d_{n-i,n-j,k}$.
\end{rem}

\subsection{Proof of Dimension Formula}\label{sec:dimproof}
In this section we prove Theorem~\ref{thm:dimform}. The argument is based on the theory of (classical) modular forms and vector-valued modular forms for the Weil representation. For the necessary background we refer the reader to \cite{Miy06,Sch09,EMS15}. In this text, unless otherwise stated, modular forms are assumed to be holomorphic on the upper half-plane but may have poles at the cusps.

Let $V$ be a \strathol{} \voa{} of central charge 24 and $\sigma$ an automorphism of $V$ of type $n\{0\}$, and assume that $V^\sigma$ satisfies the positivity assumption. Let $V^{\orb(\sigma)}=\bigoplus_{i\in\Z_n}W^{(i,0)}$ be the orbifold \voa{}~\eqref{eq:orbifold}.

Then, as shown in \cite{EMS15}, the characters $\ch_{W^{(i,j)}}(\tau)$ form a vector-valued modular form of weight~0 for the Weil representation of the group $D=\Z_n\times\Z_n$ with quadratic form $q((i,j))=ij/n+\Z$, i.e.\ they transform as
\begin{align*} 
\ch_{W^{(i,j)}}(T.\tau)&=\e^{(2\pi\i)ij/n}\ch_{W^{(i,j)}}(\tau),\\
\ch_{W^{(i,j)}}(S.\tau)&=\frac{1}{n}\sum_{k,l\in\Z_n}\e^{-(2\pi\i)(il+jk)/n}\ch_{W^{(k,l)}}(\tau)
\end{align*}
where $S=\left(\begin{smallmatrix}0&-1\\1&0\end{smallmatrix}\right)$ and $T=\left(\begin{smallmatrix}1&1\\0&1\end{smallmatrix}\right)$ are the standard generators of $\Gamma=\SLZ$. This implies that
\begin{equation*}
\ch_{W^{(0,0)}}(\tau)=\ch_{V^\sigma}(\tau)
\end{equation*}
is a modular form of weight 0 for $\Gamma_0(n)$ by Proposition~4.5 in \cite{Sch09}.

The cusps $P=\Q\cup\{\infty\}$ fall into $\sum_{c\mid n}\phi((c,n/c))$ classes modulo $\Gamma_0(n)$ (see \cite{Miy06}, Theorem~4.2.7). The cusp classes can be represented by the rational numbers $a/c$ with $(a,c)=1$ where $c$ ranges over the positive divisors of $n$ and $a$ over a complete set of representatives of $\Z_{(c,n/c)}^\times$. We denote by $t_s:=(n/c)/(c,n/c)$ the width of the cusp $s=a/c$.

For a cusp $s\in P=\Q\cup\{\infty\}$ we define
\begin{equation*}
F_s(\tau):=\sum_{\substack{M\in\Gamma_0(n)\backslash\Gamma\\M.\infty=s}}\ch_{W^{(0,0)}}(M.\tau).
\end{equation*}
\begin{lem}
Let $s=a/c\in\Q$ be a cusp of $\Gamma_0(n)$ with $(a,c)=1$ and $c\mid n$. Choose $d$ such that $ad=1\pmod{c}$. Then
\begin{equation*} 
F_s(\tau)=\frac{1}{(c,n/c)}\sum_{\substack{i,j\in\Z_{n/c}\\ij=0\!\!\!\!\pmod{t_s}}}\e^{-(2\pi\i)(dij/t_s)/(c,n/c)}\ch_{W^{(ci,cj)}}(\tau).
\end{equation*}
\end{lem}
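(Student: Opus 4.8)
The plan is to reduce $F_s$ to a finite sum of $\Gamma$-translates of $\ch_{W^{(0,0)}}$, to compute the action of a cusp-representing matrix on the character vector via the Weil representation, and then to collapse the resulting double sum with a geometric-series argument. Set $g:=(c,n/c)$, so $t_s=(n/c)/g$, and fix $\gamma_s=\left(\begin{smallmatrix}a&b\\c&d\end{smallmatrix}\right)\in\Gamma$ with $ad-bc=1$; then $ad\equiv1\pmod c$, so $(d,c)=1$ and this $d$ agrees modulo $c$ with the one in the statement. The matrices $M\in\Gamma$ with $M.\infty=s$ are exactly those in $\gamma_s\cdot\{\pm T^m\mid m\in\Z\}$, the coset of the stabiliser of $\infty$. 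A direct computation gives $\gamma_sT^h\gamma_s^{-1}=\left(\begin{smallmatrix}1-ahc&a^2h\\-c^2h&1+ahc\end{smallmatrix}\right)$, which lies in $\Gamma_0(n)$ precisely when $n\mid c^2h$, i.e.\ when $t_s\mid h$. Since $-I\in\Gamma_0(n)$, the distinct left $\Gamma_0(n)$-cosets are represented by $\gamma_sT^m$, $m=0,\dots,t_s-1$, whence
\begin{equation*}
F_s(\tau)=\sum_{m=0}^{t_s-1}\ch_{W^{(0,0)}}(\gamma_sT^m.\tau).
\end{equation*}

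Next I would compute the action of $\gamma_s$. As the $\ch_{W^{(k,l)}}$ form a vector-valued modular form for the Weil representation of $D=\Z_n\times\Z_n$ with $q((i,j))=ij/n$, I write $\ch_{W^{(0,0)}}(\gamma_s.\tau)=\sum_{k,l\in\Z_n}w_{k,l}\,\ch_{W^{(k,l)}}(\tau)$ and evaluate the zero-row coefficients $w_{k,l}$ using the explicit Gauss-sum formula for the Weil representation matrix coefficients of $\gamma_s$ (cf.\ \cite{Sch09}; these also follow from the $S$- and $T$-laws above). That formula is supported on $(k,l)\in cD$, i.e.\ on $c\mid k$ and $c\mid l$; writing $k=ci$, $l=cj$ with $i,j\in\Z_{n/c}$, the remaining quadratic Gauss factor collapses (here $(d,c)=1$ is used) to
\begin{equation*}
w_{ci,cj}=\frac{1}{n/c}\,\e^{-(2\pi\i)\,d\,ij/(n/c)},\qquad w_{k,l}=0\ \text{ for }(k,l)\notin cD.
\end{equation*}

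Finally I would reinsert the $T^m$-twist, using $\ch_{W^{(ci,cj)}}(T^m.\tau)=\e^{(2\pi\i)mc^2ij/n}\ch_{W^{(ci,cj)}}(\tau)$, and sum the geometric series:
\begin{equation*}
F_s(\tau)=\sum_{i,j\in\Z_{n/c}}w_{ci,cj}\Bigl(\sum_{m=0}^{t_s-1}\e^{(2\pi\i)mc^2ij/n}\Bigr)\ch_{W^{(ci,cj)}}(\tau).
\end{equation*}
Writing $c=gA$ and $n/c=gt_s$ with $(A,t_s)=1$, one has $c^2ij/n=A\,ij/t_s$, so the inner sum equals $t_s$ when $t_s\mid ij$ and $0$ otherwise. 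Since $t_s\,w_{ci,cj}=\tfrac{1}{g}\,\e^{-(2\pi\i)(d\,ij/t_s)/g}$ on the surviving terms and $g=(c,n/c)$, this is exactly the asserted identity.

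The main obstacle is the middle step: pinning down both the exact support $(k,l)\in cD$ and the phase $\e^{-(2\pi\i)\,d\,ij/(n/c)}$ together with the normalisation $1/(n/c)$ in the Weil-representation matrix coefficient, where the Gauss-sum evaluation and its sign and normalisation conventions require care (the dependence on $d$ enters only modulo $g=(c,n/c)$, consistently with the freedom in $\gamma_s$). The coset bookkeeping and the geometric summation are routine by comparison. As a consistency check, the cusp $s=0$ has $c=1$, $\gamma_s=S$ and $t_s=n$, where the formula reduces to $F_0(\tau)=\sum_{k,l\in\Z_n,\,n\mid kl}\ch_{W^{(k,l)}}(\tau)$; this also follows at once by applying the $S$-law to each $\ch_{W^{(0,0)}}(ST^m.\tau)$ and summing over $m$.
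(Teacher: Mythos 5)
Your proposal is correct and follows essentially the same route as the paper: identify the cosets of $\Gamma_0(n)\backslash\Gamma$ mapping $\infty$ to $s$ as $\gamma_sT^m$ with $m$ running modulo $t_s$, expand $\ch_{W^{(0,0)}}(\gamma_s.\tau)$ in the Weil representation (your coefficients $w_{ci,cj}=\frac{c}{n}\e^{-(2\pi\i)dcij/n}$ are exactly the formula the paper quotes from \cite{Sch09}, Theorem~4.7), and collapse the sum over $m$ as a geometric series. The only difference is one of detail: the paper simply cites this expansion at the cusp, whereas you sketch its derivation via Gauss sums and carry out the coset and summation bookkeeping explicitly.
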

Note that $ij/t_s$ is well-defined modulo $(c,n/c)$.
\begin{proof}
Choose a matrix $M=\left(\begin{smallmatrix}a&b\\c&d\end{smallmatrix}\right)\in\Gamma$. Then the cosets in $\Gamma_0(n)\backslash\Gamma$ sending $\infty$ to $s$ are given by $MT^j$ where $j$ ranges over a complete set of residues modulo $t_s$. The statement now follows from 
\begin{equation}\label{eq:chcusp}
\ch_{W^{(0,0)}}(M.\tau)=\frac{c}{n}\sum_{i,j\in\Z_{n/c}}\e^{-(2\pi\i)dcij/n}\ch_{W^{(ci,cj)}}(\tau)
\end{equation}
(see \cite{Sch09}, Theorem~4.7 and \cite{Sch15}, Section~2).
\end{proof}

The orbifold \voa{} for $\sigma^d$ where $d\mid n$ is given by
\begin{equation*}
V^{\orb(\sigma^d)}=\bigoplus_{\substack{j\in\Z_{n/d}\\k\in \Z_d}}W^{(dj,kn/d)}.
\end{equation*}
Together with the above lemma this implies the following relation between the characters of the orbifolds $V^{\orb(\sigma^d)}$ and the functions $F_s$. 
\begin{prop}\label{prop:stt}
The following formula holds:
\begin{equation*}
\sum_{s\in\Gamma_0(n)\backslash P} F_s(\tau)=\sum_{d\mid n}\frac{\phi((d,n/d))}{(d,n/d)}\ch_{V^{\orb(\sigma^d)}}(\tau).
\end{equation*}
\end{prop}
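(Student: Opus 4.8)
The plan is to evaluate $\sum_s F_s$ by substituting the explicit expression for $F_s$ from the preceding lemma, carrying out the sum over cusp classes, and then comparing the result with the right-hand side after expanding $\ch_{V^{\orb(\sigma^d)}}=\sum_{j\in\Z_{n/d},\,k\in\Z_d}\ch_{W^{(dj,kn/d)}}$. Both sides are $\C$-linear combinations of the characters $\ch_{W^{(\alpha,\beta)}}$, $\alpha,\beta\in\Z_n$, so it suffices to show that the coefficient of each $\ch_{W^{(\alpha,\beta)}}$ agrees; this yields the functional identity irrespective of any linear (in)dependence among the characters.

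First I would fix the indexing of cusp classes recalled in the excerpt: a complete set is $s=a/c$ with $c\mid n$ and $a$ running over representatives of $\Z_{(c,n/c)}^\times$, with width $t_s=(n/c)/(c,n/c)$. Writing $e:=(c,n/c)$ and inserting the lemma, the characters $\ch_{W^{(ci,cj)}}$ are independent of $a$, so for fixed $c$ the only $a$-dependence sits in the phase $\e^{-(2\pi\i)(d\,ij/t_s)/e}$ with $d\equiv a^{-1}\pmod c$. Summing over $a$ thus reduces to $\sum_a\e^{-(2\pi\i)(a^{-1}w)/e}$ with $w=ij/t_s\in\Z$; since $a\mapsto a^{-1}\bmod e$ permutes $\Z_e^\times$, this is exactly Ramanujan's sum $\mathfrak{c}_e(w)=\sum_{\delta\mid(e,w)}\mu(e/\delta)\,\delta$. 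Hence for each divisor $c\mid n$,
\begin{equation*}
\sum_{a\in\Z_{(c,n/c)}^\times}F_{a/c}(\tau)=\frac{1}{e}\sum_{\substack{i,j\in\Z_{n/c}\\ t_s\mid ij}}\mathfrak{c}_e\!\left(\frac{ij}{t_s}\right)\ch_{W^{(ci,cj)}}(\tau),
\end{equation*}
and the left-hand side of the proposition is the sum of these over all $c\mid n$.

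Next I would read off the coefficient of $\ch_{W^{(\alpha,\beta)}}$ on each side, taking $\alpha,\beta\in\{0,\dots,n-1\}$. On the right, $dj\equiv\alpha$ and $kn/d\equiv\beta$ force $d\mid\alpha$ and $(n/d)\mid\beta$ and then determine $(j,k)$ uniquely, so the coefficient is $\sum_{d\mid n,\,d\mid\alpha,\,(n/d)\mid\beta}\phi((d,n/d))/(d,n/d)$. On the left only divisors with $c\mid\alpha$ and $c\mid\beta$ contribute, with $i=\alpha/c$, $j=\beta/c$, giving $\sum_{c\mid(\alpha,\beta,n)}\frac1e\,\mathfrak{c}_e(\alpha\beta/(c^2t_s))$, the term being present only when $t_s\mid\alpha\beta/c^2$. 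The proposition is therefore equivalent to the purely arithmetic identity asserting the equality of these two divisor sums for every pair $(\alpha,\beta)$.

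The main obstacle will be exactly this arithmetic identity, whose two sides carry \emph{different} support conditions ($c$ dividing both $\alpha$ and $\beta$ versus $d\mid\alpha$ and $(n/d)\mid\beta$). I would establish it by multiplicativity: the maps $c\mapsto(c,n/c)$, $e\mapsto\phi(e)$ and $e\mapsto\mathfrak{c}_e(w)$ are multiplicative and the divisibility constraints factor over the primes dividing $n$, so it suffices to verify the identity for $n=p^r$ a prime power, where $\alpha,\beta$ enter only through their $p$-adic valuations (capped at $r$) and the claim becomes a short finite check. The delicate point in that check is matching the supports — confirming that the constraint $t_s\mid ij$ together with the Möbius factors in $\mathfrak{c}_e$ makes the left-hand coefficient vanish precisely when no $d$ with $d\mid\alpha$, $(n/d)\mid\beta$ exists. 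As a sanity check, for $n$ prime the whole computation collapses to $\sum_sF_s=\ch_{V^{\orb(\sigma)}}+\ch_V$, which matches the contributions $d\in\{1,n\}$ on the right.
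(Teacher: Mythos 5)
Your proposal is correct and follows essentially the same route as the paper, which simply asserts that the identity follows from combining the preceding lemma with the decomposition $V^{\orb(\sigma^d)}=\bigoplus_{j\in\Z_{n/d},\,k\in\Z_d}W^{(dj,kn/d)}$ and leaves the verification implicit. Your execution of that verification — summing the phases over $a$ to produce Ramanujan sums, matching coefficients of each $\ch_{W^{(\alpha,\beta)}}$, and settling the resulting divisor-sum identity by multiplicativity and a prime-power check — is sound (and checks out in small cases such as $n=p$ and $n=4$), so it supplies exactly the details the paper omits.
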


So far the results do not depend on the genus of $\Gamma_0(n)$.
\begin{lem}\label{lem:fs}
Suppose $\Gamma_0(n)$ has genus zero, and let $s\in P$ be a cusp. If $s$ is not equivalent to $\infty$ under $\Gamma_0(n)$, then there is a modular function $f_s$ for $\Gamma_0(n)$ which is holomorphic and non-zero on $\H^*$ except for $\ord_s(f_s)=-1/t_s$ and $\ord_\infty(f_s)=1$.

If $s$ is equivalent to $\infty$, there is a modular function $f_s$ for $\Gamma_0(n)$ which is holomorphic and non-zero on $\H^*$ except for $\ord_s(f_s)=-1=-1/t_s$ and $\ord_0(f_s)=1/n$.
\end{lem}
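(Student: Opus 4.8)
The plan is to exploit the genus-zero hypothesis in essentially the only way it can be used. It guarantees that $X_0(n)=\Gamma_0(n)\backslash\H^*$ is biholomorphic to the Riemann sphere $\mathbb{P}^1(\C)$, with the Hauptmodul furnishing an explicit isomorphism $t_n\colon X_0(n)\xrightarrow{\sim}\mathbb{P}^1(\C)$. In particular $t_n$ is injective and assumes each value of $\mathbb{P}^1(\C)$, including $\infty$, exactly once; consequently distinct cusps of $\Gamma_0(n)$ are sent to distinct points $[c]:=t_n(c)$, and $t_n$ takes no value on $\H$ that it already assumes at a cusp. I would then produce $f_s$ as a Möbius transformation of $t_n$, i.e. an element of $\C(t_n)$ of degree one whose single simple zero and single simple pole sit at the prescribed cusps.

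First I would fix, for each cusp $c$, the integer order $\nu_c(f)\in\Z$ of a modular function $f$ in the local uniformiser of $X_0(n)$ at $c$, and record the elementary relation $\ord_c(f)=\nu_c(f)/t_c$ between it and the $q$-normalised order ($q=\e^{2\pi\i\tau}$) appearing in the statement. Since $t_\infty=1$ and $t_0=n$, the prescriptions $\ord_\infty(f_s)=1$, $\ord_s(f_s)=-1/t_s$ and $\ord_0(f_s)=1/n$ all translate into the single demand that $f_s$, viewed as a rational function on $\mathbb{P}^1(\C)$, have one simple zero and one simple pole at two specified cusp-points and be holomorphic and non-vanishing elsewhere.

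In Case 1, where $s\not\sim\infty$, the images $[s]$ and $[\infty]$ are distinct points of $\mathbb{P}^1(\C)$. I would choose $\mu\in\mathrm{PGL}_2(\C)$ with $\mu([s])=\infty$ and $\mu([\infty])=0$, which exists because the Möbius group acts transitively on ordered pairs of distinct points, and set $f_s:=\mu\circ t_n\in\C(t_n)$. Since $t_n$ is a biholomorphism, $f_s$ has a simple pole exactly at $[s]$ and a simple zero exactly at $[\infty]$, and is holomorphic and non-zero on the rest of $\H^*$, in particular throughout $\H$; dividing the integer orders by the cusp widths yields $\ord_s(f_s)=-1/t_s$ and $\ord_\infty(f_s)=1$. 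In Case 2, where $s\sim\infty$ and hence $t_s=1$, the point $[s]=[\infty]$ cannot carry both zero and pole, so I would place the compensating zero at the cusp $0$, which is inequivalent to $\infty$ for $n>1$ as $t_0=n\neq1=t_\infty$ and equivalent cusps have equal width, and take $\mu$ with $\mu([\infty])=\infty$ and $\mu([0])=0$, giving $\nu_\infty(f_s)=-1$ and $\nu_0(f_s)=1$, i.e. $\ord_s(f_s)=-1$ and $\ord_0(f_s)=1/n$.

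The substantive point, and the only place the hypothesis is indispensable, is the existence of a modular function with a single simple pole: this is equivalent to a degree-one map $X_0(n)\to\mathbb{P}^1(\C)$ and hence forces, and is forced by, genus zero. Everything else is bookkeeping: confirming $t_\infty=1$, $t_0=n$ and $t_0\neq t_\infty$, and converting correctly between the integer divisor on $X_0(n)$ and the width-normalised orders $\ord_c$. The one case deserving caution is the normalisation of the Hauptmodul, since different authors place its pole at $\infty$ or at $0$; routing the construction through an arbitrary $\mu$ rather than through an explicit ratio such as $(t_n-t_n([\infty]))/(t_n-t_n([s]))$ sidesteps this entirely, as it makes no assumption on where $t_n$ attains the value $\infty$.
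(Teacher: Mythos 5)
Your proof is correct and rests on the same essential input as the paper's: on a genus-zero curve every degree-zero divisor supported at the cusps is principal, which the paper extracts from Abel's theorem and you extract from the biholomorphism $t_n\colon X_0(n)\to\mathbb{P}^1(\C)$ followed by a Möbius transformation placing the simple pole and simple zero at the prescribed cusp points. Your construction is the constructive counterpart of the paper's one-line argument — it coincides with the explicit choices $f_s\in\C(t_n)$ (such as $1/(t_4+16)$) that the paper writes down later in the section — and your bookkeeping with the width-normalised orders $\ord_s(f)=\nu_s(f)/t_s$, including the caveat that the second case needs $0\not\sim\infty$, i.e.\ $n>1$, is handled correctly.
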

The functions $f_s$ are unique up to a non-zero factor.
\begin{proof}
By Abel's Theorem (see, e.g.\ \cite{Mir95}, Chapter~VIII) the modular curve $X_0(n)=\Gamma_0(n)\backslash\H^*$ has genus zero if and only if every divisor of degree zero is a principal divisor. Hence, any divisor $D$ supported only at the cusps of $\Gamma_0(n)$ and satisfying
\begin{equation*}
\sum_{s\in\Gamma_0(n)\backslash P}\ord_s(D)t_s=0
\end{equation*}
is the divisor of a meromorphic function on $X_0(n)$ that is holomorphic and non-zero away from the cusps.
\end{proof}

\begin{lem}
Suppose $\Gamma_0(n)$ has genus zero. Then the character $\ch_{V^\sigma}(\tau)$ can be written as 
\begin{equation*}
\ch_{V^\sigma}(\tau)=\sum_{s\in\Gamma_0(n)\backslash P}g_s(f_s(\tau))+c\end{equation*}
with complex polynomials $g_s$ and a complex constant $c$. The polynomials $g_s$ are unique up to constants (once we fix a choice of the $f_s$).
\end{lem}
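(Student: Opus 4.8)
The plan is to express $\ch_{V^\sigma}$ as a modular function for $\Gamma_0(n)$ whose only possible poles lie at the cusps, and then to subtract off a suitable combination of the functions $f_s$ supplied by the previous lemma in order to kill the principal (pole) parts at every cusp simultaneously. Recall that $\ch_{V^\sigma}(\tau)=\ch_{W^{(0,0)}}(\tau)$ was already shown to be a modular form of weight $0$ for $\Gamma_0(n)$; as a character it is holomorphic on $\H$, and by the genus-zero assumption the field of such modular functions is $\C(t_n)$ for a Hauptmodul $t_n$. Thus $\ch_{V^\sigma}$ is a rational function of $t_n$, regular on $\H^*$ away from the cusps.

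First I would analyse the behaviour of $\ch_{V^\sigma}$ at each cusp $s\in\Gamma_0(n)\backslash P$. Expanding $\ch_{W^{(0,0)}}(M.\tau)$ via \eqref{eq:chcusp} shows that the expansion at $s$ is a $\C$-linear combination of the characters $\ch_{W^{(ci,cj)}}$, each of which is a $q$-series in a fractional power of $q$ with a leading term governed by the conformal weight $\rho(W^{(ci,cj)})$. Since a character has a pole at a cusp precisely when some contributing module has negative conformal weight, and since the positivity assumption forces $\rho(W)>0$ for $W\not\cong V^\sigma$, the pole at $s$ arises solely from the identity module contribution and is therefore a genuine pole of some bounded finite order in the local uniformiser at $s$. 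I would record, for each cusp $s$, the principal part $P_s(\tau)$ of $\ch_{V^\sigma}$ with respect to the local parameter, a polynomial (without constant term) in $1/q_s^{1/t_s}$ whose degree is controlled by $\rho(V^\sigma)=0$ together with the $L_0$-grading shift.

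Next I would invoke Lemma~\ref{lem:fs}: for each cusp $s$ there is a modular function $f_s$, holomorphic and non-vanishing on $\H^*$ except for a single simple pole (in the local uniformiser) at $s$, normalised so that $\ord_s(f_s)=-1/t_s$. Because $f_s$ has a pole only at $s$ and nowhere else, any desired principal part at $s$ can be matched by a unique polynomial $g_s$ in $f_s$ with zero constant term: the leading term of $f_s$ fixes the top coefficient, and one descends recursively through lower-order terms, using that $f_s$ is non-zero at the other cusps so that the $g_s(f_s)$ contribute only holomorphically there. Forming $\ch_{V^\sigma}(\tau)-\sum_{s}g_s(f_s(\tau))$ then produces a modular function for $\Gamma_0(n)$ that is holomorphic at every cusp and on all of $\H$, hence a holomorphic modular form of weight $0$ on the compact Riemann surface $X_0(n)$, which must be a constant $c$. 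Rearranging gives the stated identity. Uniqueness of the $g_s$ up to constants follows because the $f_s$ have disjoint pole supports, so the map sending $(g_s)_s$ to the collection of principal parts is injective modulo adding constants to the $g_s$.

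The main obstacle I anticipate is the cusp-by-cusp pole analysis: one must argue carefully that the leading exponents in \eqref{eq:chcusp}, after the substitution to the local uniformiser at $s$, never produce an accidental non-polynomial singularity and that the positivity assumption genuinely confines all poles to the finitely many terms coming from the vacuum. This is where the interplay between the twisted-module conformal weights $\rho(W^{(ci,cj)})$, the cusp widths $t_s$, and the fractional $q$-expansions has to be handled precisely; the rest of the argument is the essentially formal genus-zero matching of principal parts.
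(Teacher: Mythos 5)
Your proposal follows essentially the same route as the paper: view $\ch_{V^\sigma}$ as a weight-$0$ modular form for $\Gamma_0(n)$, holomorphic on $\H$ with possible poles only at the cusps, subtract polynomials $g_s(f_s)$ to cancel the principal part at each cusp (using that $f_s$ has its sole pole at $s$), and conclude that the remainder is holomorphic on all of $X_0(n)$ and hence constant. The extra detail you supply on the cusp expansions via \eqref{eq:chcusp} and the positivity assumption is consistent with what the paper uses immediately after the lemma to pin down the degrees $t_s$ of the $g_s$, so the argument is correct.
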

\begin{proof}
The character $\ch_{V^\sigma}(\tau)$ is a modular form of weight $0$, which is holomorphic on the upper half-plane and has possible poles at the cusps. Since $f_s$ has its only pole at the cusp $s$, there are complex polynomials $g_s$ such that 
\begin{equation*}
\ch_{V^\sigma}(\tau)-\sum_{s\in\Gamma_0(n)\backslash P}g_s(f_s(\tau))
\end{equation*}
is holomorphic on the upper half-plane and at all cusps and hence constant.
\end{proof}
For definiteness we choose the polynomials $g_s$ to have zero constant term. We can make the statement of the above theorem even more precise. Noting that \eqref{eq:chcusp} implies that $\ch_{V^\sigma}(\tau)$ has a pole of order $-1$ at every cusp $s$ of $\Gamma_0(n)$ and comparing this with the pole of order $-1/t_s$ of the function $f_s$ at the cusp $s$ we see that
\begin{equation*}
\ch_{V^\sigma}(\tau)=c+\sum_{s\in\Gamma_0(n)\backslash P}\sum_{i=1}^{t_s}c_{s,i}f_s(\tau)^i
\end{equation*}
for certain constants $c_{s,i}\in\C$, $s\in\Gamma_0(n)\backslash P$, $i\in\{1,\ldots,t_s\}$, i.e.\ the polynomial
\begin{equation*}
g_s(x)=\sum_{i=1}^{t_s}c_{s,i}x^i
\end{equation*}
is of degree $t_s$ with zero constant term. The constant $c$ is given by $\dim(V_1^\sigma)-[f_\infty](0)$ where $[f_\infty](0)$ denotes the constant coefficient in the $q$-expansion of $f_\infty$ since all $f_s$ vanish at $\infty$ except for $f_\infty$.

Suppose we have determined the polynomials $g_s$ or equivalently the constants $c_{s,i}$. Then we can calculate the expansions of $\ch_{V^\sigma}(\tau)$ at the different cusps, and using Proposition~\ref{prop:stt} we can compute
\begin{equation*}
\sum_{d\mid n}\frac{\phi((d,n/d))}{(d,n/d)}\dim(V_1^{\orb(\sigma^d)})=\sum_{s\in\Gamma_0(n)\backslash P}[F_s](0).
\end{equation*}

There are several ways to determine the coefficients of the polynomials $g_s$. One possibility is as follows. Note that
\begin{equation*}
\ch_{V^\sigma}(\tau)=\ch_{W^{(0,0)}}(\tau)=\frac{1}{n}\sum_{j\in\Z_n}T(\vac,0,j,\tau)
\end{equation*}
in terms of twisted characters (see, e.g.\ \cite{EMS15} for their definition and properties in the orbifold context) so that 
\begin{align*}
\ch_{V^\sigma}(M.\tau)&=\frac{1}{n}\sum_{j\in\Z_n}T(\vac,(0,j)M,\tau)=\frac{1}{n}\sum_{j\in\Z_n}T(\vac,cj,dj,\tau)\\
&=\frac{1}{n}\sum_{j\in\Z_n}\tr_{V(\sigma^{cj})}\phi_{cj}(\sigma^{dj})q^{L_0-1}=\frac{1}{n}\sum_{j,k\in\Z_n}\xi_n^{djk}\tr_{W^{(cj,k)}}q^{L_0-1}
\end{align*}
for $M=\left(\begin{smallmatrix}a&b\\c&d\end{smallmatrix}\right)\in\Gamma$. Hence we can determine the coefficients of the polynomials $g_s$ in the decomposition
\begin{equation*}
\ch_{V^\sigma}(\tau)=\sum_{s\in\Gamma_0(n)\backslash P}g_s(f_s(\tau))+c
\end{equation*}
by expanding both sides at the cusp $s$ using the above formula for $\ch_{V^\sigma}(M.\tau)$ with $M.\infty=s$ and comparing the $t_s$ many singular $q$-coefficients.

We then obtain
\begin{equation*}
\sum_{d\mid n}\frac{\phi((d,n/d))}{(d,n/d)}\dim(V_1^{\orb(\sigma^d)})=\sum_{s\in\Gamma_0(n)\backslash P}[F_s](0)
\end{equation*}
explicitly as some constant plus a multiple of $\dim(V_1^\sigma)$ plus some linear combination of the dimensions $\dim(W^{(i,j)}_{k/n})$ with $k/n<1$.

By definition, $F_s$ is a sum of $|\{M\in\Gamma_0(n)\backslash\Gamma\,|\,M.\infty=s\}|=t_s$ many terms, each containing $\dim(V_1^\sigma)$ once, so that every $F_s$ contributes $t_s\dim(V_1^\sigma)$, and hence this dimension appears with a total multiplicity of $\sum_{s\in\Gamma_0(n)\backslash P}t_s=[\Gamma:\Gamma_0(n)]$ (see Remark~\ref{rem:sumcuspwidths}).

Finally, we describe our choice of the functions $f_s$ for some of the cases considered in Theorem~\ref{thm:dimform}.

First, let $n=2,3,5,7,13$. Then a Hauptmodul for $\Gamma_0(n)$ is given by the eta product
\begin{equation*}
t_n(\tau)=\left(\frac{\eta(\tau)}{\eta(n\tau)}\right)^{24/(n-1)}=q^{-1}-\frac{24}{n-1}+\ldots
\end{equation*}
where $\eta(\tau)$ is the Dedekind eta function. The Hauptmodul is holomorphic and non-zero on $\H$. There are two classes of cusps of $\Gamma_0(n)$, represented by $1/n$ ($=\infty$) and $1/1$ ($=0$). We define the functions
\begin{align*}
f_{1/n}(\tau)&:=t_n(\tau)=\left(\frac{\eta(\tau)}{\eta(n\tau)}\right)^{24/(n-1)},\\
f_{1/1}(\tau)&:=\frac{1}{t_n(\tau)}=\left(\frac{\eta(n\tau)}{\eta(\tau)}\right)^{24/(n-1)}
\end{align*}
for the cusps $1/n$ and $1/1$ of $\Gamma_0(n)$, respectively. These functions are holomorphic and non-zero on the upper half-plane $\H$ and have exactly the poles and zeroes at the cusps demanded in Lemma~\ref{lem:fs}.

Now suppose $n=4$. A Hauptmodul for $\Gamma_0(4)$ is given by
\begin{equation*}
t_4(\tau)=\frac{\eta(\tau)^8}{\eta(4\tau)^8}=q^{-1}-8+\ldots
\end{equation*}
There are three classes of cusps of $\Gamma_0(4)$, represented by $1/4$ ($=\infty$), $1/2$ and $1/1$ ($=0$). We define the functions
\begin{align*}
f_{1/4}(\tau)&:=t_4(\tau)=\frac{\eta(\tau)^8}{\eta(4\tau)^8},\\
f_{1/2}(\tau)&:=\frac{1}{t_4(\tau)+16}=\frac{\eta(\tau)^8\eta(4\tau)^{16}}{\eta(2\tau)^{24}},\\
f_{1/1}(\tau)&:=\frac{1}{t_4(\tau)}=\frac{\eta(4\tau)^8}{\eta(\tau)^8}
\end{align*}
for the cusps $1/4$, $1/2$ and $1/1$ of $\Gamma_0(4)$, respectively. These functions are holomorphic and non-zero on the upper half-plane $\H$ and have exactly the poles and zeroes at the cusps demanded in Lemma~\ref{lem:fs}.

Let $n=6$. A Hauptmodul for $\Gamma_0(6)$ is given by
\begin{equation*}
t_6(\tau)=\frac{\eta(\tau)^5\eta(3\tau)}{\eta(2\tau)\eta(6\tau)^5}=q^{-1}-5+\ldots
\end{equation*}
There are four classes of cusps of $\Gamma_0(6)$, represented by $1/6$ ($=\infty$), $1/3$, $1/2$ and $1/1$ ($=0$). We define the functions
\begin{align*}
f_{1/6}(\tau)&:=t_6(\tau)=\frac{\eta(\tau)^5\eta(3\tau)}{\eta(2\tau)\eta(6\tau)^5},\\
f_{1/3}(\tau)&:=\frac{1}{t_6(\tau)+8}=\frac{\eta(\tau)^3\eta(6\tau)^9}{\eta(2\tau)^3\eta(3\tau)^9},\\
f_{1/2}(\tau)&:=\frac{1}{t_6(\tau)+9}=\frac{\eta(\tau)^4\eta(6\tau)^8}{\eta(2\tau)^8\eta(3\tau)^4},\\
f_{1/1}(\tau)&:=\frac{1}{t_6(\tau)}=\frac{\eta(2\tau)\eta(6\tau)^5}{\eta(\tau)^5\eta(3\tau)}
\end{align*}
for the cusps $1/6$, $1/3$, $1/2$ and $1/1$ of $\Gamma_0(6)$, respectively. These functions are holomorphic and non-zero on the upper half-plane $\H$ and have exactly the poles and zeroes at the cusps demanded in Lemma~\ref{lem:fs}.

Let $n=8$. A Hauptmodul for $\Gamma_0(8)$ is given by
\begin{equation*}
t_8(\tau)=\frac{\eta(\tau)^4\eta(4\tau)^2}{\eta(2\tau)^2\eta(8\tau)^4}=q^{-1}-4+\ldots
\end{equation*}
There are four classes of cusps of $\Gamma_0(8)$, represented by $1/8$ ($=\infty$), $1/4$, $1/2$ and $1/1$ ($=0$). We define the functions
\begin{align*}
f_{1/8}(\tau)&:=t_8(\tau)=\frac{\eta(\tau)^4\eta(4\tau)^2}{\eta(2\tau)^2\eta(8\tau)^4},\\
f_{1/4}(\tau)&:=\frac{1}{t_8(\tau)+4}=\frac{\eta(2\tau)^4\eta(8\tau)^8}{\eta(4\tau)^{12}},\\
f_{1/2}(\tau)&:=\frac{1}{t_8(\tau)+8}=\frac{\eta(\tau)^4\eta(4\tau)^2\eta(8\tau)^4}{\eta(2\tau)^{10}},\\
f_{1/1}(\tau)&:=\frac{1}{t_8(\tau)}=\frac{\eta(2\tau)^2\eta(8\tau)^4}{\eta(\tau)^4\eta(4\tau)^2}
\end{align*}
for the cusps $1/8$, $1/4$, $1/2$ and $1/1$ of $\Gamma_0(8)$, respectively. These functions are holomorphic on the upper half-plane $\H$ and have exactly the poles and zeroes at the cusps demanded in Lemma~\ref{lem:fs}.

In the remaining cases one can easily construct the functions $f_s$ using the expressions of the Hauptmoduln for $\Gamma_0(n)$ as eta products given in \cite{CN79}, Section~11, Table~3.

\section{Uniqueness Procedure}\label{sec:uniqueness}
The following idea is due to Lam and Shimakura (see \cite{LS16b}, Theorem~5.2) and makes use of the inverse orbifold construction developed in \cite{EMS15}.

\subsection{General Idea}
Let $U$ be a \strathol{} \voa{} of central charge $c$ and $g$ an automorphism of $U$ of type $n\{0\}$ for some $n\in\Ns$. Assume that $U^g$ satisfies the positivity assumption. Then by \cite{EMS15} we can apply the $\Z_n$-orbifold construction to $U$ and $g$ and obtain the orbifolded \voa{} $U^{\orb(g)}$, which is also \strathol{} and of central charge $c$.

On the other hand, let $V$ be a \strathol{} \voa{} of central charge $c$ with weight-one Lie algebra $V_1\cong U^{\orb(g)}_1$.
Suppose that there exists an element $v\in V_1$ such that $v_0$ acts semisimply on $V$, the inner automorphism $\sigma_v:=\e^{-(2\pi\i)v_0}$ has type $n\{0\}$ on $V$ and $V^{\sigma_v}$ satisfies the positivity assumption. Then we can apply the $\Z_n$-orbifold construction to $V$ and $\sigma_v$ and obtain the \strathol{} \voa{} $V^{\orb(\sigma_v)}$ of central charge $c$. Assume that $V^{\orb(\sigma_v)}\cong U$.

By the inverse orbifold construction \cite{EMS15} there exists an automorphism $\amgis\in\Aut(V^{\orb(\sigma_v)})$ of type $n\{0\}$ with $(V^{\orb(\sigma_v)})^{\amgis}=V^{\sigma_v}$ such that the orbifold construction with $V^{\orb(\sigma_v)}$ and $\amgis$ gives back $V$, i.e.\ $(V^{\orb(\sigma_v)})^{\orb(\amgis)}=V$. Finally, assume that under the isomorphism $V^{\orb(\sigma_v)}\cong U$ the inverse orbifold automorphism $\amgis\in\Aut(V^{\orb(\sigma_v)})$ of $\sigma_v$ is conjugate to $g\in\Aut(U)$ (and in particular $V_1^{\sigma_v}\cong U^g_1$). Then the orbifold of $V^{\orb(\sigma_v)}$ by $\amgis$, which is $V$, is isomorphic to the orbifold $U^{\orb(g)}$ of $U$ by $g$, i.e.\ $V\cong U^{\orb(g)}$.

\subsection{Concrete Approach}\label{sec:approach}
In order to prove the uniqueness conjecture (Conjecture~\ref{conj:unique}) for some cases on Schellekens' list we specialise to the following situation. 

\subsubsection*{Part 1: Construction}
Let $U:=V_L$ be the lattice \voa{} associated with some Niemeier lattice $L$. Then $V_L$ is \strathol{} and of central charge 24. Let $\nu\in\Aut(L)$ be an automorphism of the lattice $L$ and $\hat\nu\in\Aut(V_L)$ a standard lift of $\nu$. We denote by $n\in\Ns$ the order of $\hat\nu$, which is $\ord(\nu)$ or $2\ord(\nu)$ (possible order doubling). Suppose that the conformal weight $\rho(V_L(\hat\nu))=0\pmod{1/n}$, i.e.\ $\hat\nu$ is of type $n\{0\}$. Then the \fpvosa{} $V_L^{\hat\nu}$ satisfies the positivity assumption. We apply the orbifold construction \cite{EMS15} and obtain the orbifolded \voa{} $V_L^{\orb(\hat\nu)}$, which is again \strathol{} and of central charge 24. Then we compute the dimension of the Lie algebra $(V_L^{\orb(\hat\nu)})_1$, which has to be one of the 71 Lie algebras on Schellekens' list. Suppose we are able to determine the Lie algebra structure of $(V_L^{\orb(\hat\nu)})_1$ up to isomorphism.

\subsubsection*{Part 2: Inner Automorphism}
We want to show that $V_L^{\orb(\hat\nu)}$ is up to isomorphism the only \strathol{} \voa{} of central charge 24 with weight-one Lie algebra isomorphic to $(V_L^{\orb(\hat\nu)})_1$. To this end, consider another \strathol{} \voa{} $V$ of central charge 24 with $V_1\cong (V_L^{\orb(\hat\nu)})_1$.

The Lie algebra $V_1\cong (V_L^{\orb(\hat\nu)})_1$ is reductive or more precisely semisimple, abelian or zero. Assume that $V_1$ is semisimple. We fix a Cartan subalgebra $\hh$ of $V_1$. Let $h\in\hh$ such that $h_0$ acts semisimply on $V$.\footnote{It is shown in \cite{DM04b} and \cite{Mas14}, Section~3.3, that every $V$-module is a completely reducible $V_1$-module.
Then if $v\in V_1$ is a semisimple element, the zero mode $v_0$ acts semisimply on every $V$-module (and in particular on $V$ itself).
}
The action of $h_0$ on $V_1$ is given by $\ad_h$. In particular, $h_0x=[h,x]=\alpha(h)x$ for $x$ in the root space of $V_1$ associated with the root $\alpha\in\Phi\subseteq\hh^*$.

Let us consider the inner automorphism $\sigma_h:=\e^{-(2\pi\i)h_0}$ on $V$ and assume that it has type $n\{0\}$, that $V^{\sigma_h}$ satisfies the positivity assumption and that $V^{\sigma_h}_1\cong (V_L^{\hat\nu})_1$. Then we can apply the orbifold construction and obtain the \strathol{} \voa{} $V^{\orb(\sigma_h)}$ of central charge 24. Assume that we can compute the dimension $d:=\dim(V^{\orb(\sigma_h)}_1)$. More specifically, we assume that $\rho(V(\sigma_h^i))\geq 1$ for all $i\in\Z_n\setminus\{0\}$ so that we can apply the dimension formula in Corollary~\ref{cor:dimform}. Then suppose that $d=\dim((V_L)_1)$.

We know that $(V_L^{\hat\nu})_1$ is a fixed-point Lie subalgebra of $(V_L)_1$ under an automorphism of order dividing $n$, namely under $\hat\nu|_{(V_L)_1}$. Then, so is $V^{\sigma_h}_1\cong (V_L^{\hat\nu})_1$. Suppose that $(V_L)_1$ is the only Lie algebra on Schellekens' list in dimension $d$ with $(V_L^{\hat\nu})_1$ as a possible fixed-point Lie subalgebra under an automorphism of order dividing $n$ (the possible fixed-point Lie subalgebras can be determined using Kac's theory as described in Section~\ref{sec:kac}). Then we know that $V^{\orb(\sigma_h)}_1\cong (V_L)_1$ since $V^{\orb(\sigma_h)}_1$ has dimension $d$ by definition and has $V^{\sigma_h}_1\cong (V_L^{\hat\nu})_1$ as fixed-point Lie subalgebra (under the inverse orbifold automorphism $\amgis$ restricted to $V^{\orb(\sigma_h)}_1$, see below). Then, finally $V^{\orb(\sigma_h)}\cong V_L$ since it is shown in \cite{DM04b} that a \strathol{} \voa{} of central charge 24 with a Lie algebra of rank 24 is already isomorphic to a lattice \voa{}.

\subsubsection*{Part 3: Unique Conjugacy Class}
Let $\amgis\in\Aut(V^{\orb(\sigma_h)})$ be the inverse orbifold automorphism such that $(V^{\orb(\sigma_h)})^{\orb(\amgis)}\cong V$. Assume that we can show that $\amgis$ is conjugate to $\hat\nu\in\Aut(V_L)$ under the isomorphism $V^{\orb(\sigma_h)}\cong V_L$. More specifically we will show that there is a unique conjugacy class of automorphisms $g$ of $V_L$ of order~$n$ with fixed-point Lie subalgebra $(V_L^g)_1$ isomorphic to $V^{\sigma_h}_1\cong (V_L^{\hat\nu})_1$ and possibly the condition that the conformal weight $\rho(V_L(g))=1$.

Knowing that $\amgis$ and $\hat\nu$ are conjugate we conclude, as described above, that $V\cong V_L^{\orb(\hat\nu)}$. This then proves that the \voa{} $V_L^{\orb(\hat\nu)}$ is the up to isomorphism unique \strathol{} \voa{} of central charge 24 with weight-one space isomorphic as Lie algebra to $(V_L^{\orb(\hat\nu)})_1$.

\section{Part 1: Construction}\label{sec:part1}
The construction of orbifolds $V_L^{\orb(\hat\nu)}$ associated with a holomorphic lattice \voa{} $V_L$ and an automorphism $\hat\nu\in\Aut(V_L)$ of type $n\{0\}$ obtained as a standard lift of a lattice automorphism $\nu\in\Aut(L)$ is described in \cite{EMS15}, Section~8 and \cite{Moe16}, Sections 5.6, 6.2 and 6.3. Applying these methods we obtain orbifold constructions of the following \strathol{} \voa{}s of central charge 24:
\begin{prop}[Part 1]\label{prop:part1}
For each entry of the following table there exists an automorphism $\nu$ of the Niemeier lattice $L$ of order~$n$ and cycle shape $s$ such that the orbifold \voa{} $V_L^{\orb(\hat\nu)}$ of $V_L$ by a standard lift $\hat{\nu}$ (also of order~$n$) has the indicated weight-one Lie algebra $(V_L^{\orb(\hat{\nu})})_1$.
\begin{equation*}
\renewcommand{\arraystretch}{1.2}
\begin{tabular}{rlrlllc}
No.\ &$L$&$n$&$s$&$(V_L^{\hat\nu})_1$&$(V_L^{\orb(\hat\nu)})_1$& \cite{Sch93} \\\hline
 $(1)$ & $N(A_9^2D_6)$     & $2$ & $1^{-8}2^{16}$       & $A_5C_5D_5\C$         & $A_5C_5E_{6,2}$     & $44$\\
 $(2)$ & $N(A_7^2D_5^2)$   & $2$ & $1^{-8}2^{16}$       & $A_1^2A_3A_7B_2^2$    & $A_3A_{7,2}C_3^2$   & $33$\\
 $(3)$ & $N(A_8^3)$        & $2$ & $1^{-8}2^{16}$       & $A_8B_4$              & $A_{8,2}F_{4,2}$    & $36$\\
 $(4)$ & $N(E_8^3)$        & $2$ & $1^{-8}2^{16}$       & $D_8E_8$              & $B_8E_{8,2}$        & $62$\\
 $(5)$ & $N(A_5^4D_4)$     & $2$ & $1^{-8}2^{16}$       & $A_1A_2^2A_3A_5B_2\C$ & $A_2^2A_{5,2}^2B_2$ & $26$\\
 $(6)$ & $N(E_6^4)$        & $2$ & $1^{-8}2^{16}$       & $C_4^2F_4^2$          & $C_8F_4^2$          & $52$\\
 $(7)$ & $N(A_4^6)$        & $2$ & $1^{-8}2^{16}$       & $A_4^2B_2^2$          & $A_{4,2}^2C_{4,2}$  & $22$\\
 $(8)$ & $N(A_2^{12})$     & $2$ & $1^{-8}2^{16}$       & $A_1^4A_2^4$          & $A_{2,2}^4D_{4,4}$  & $13$\\
 $(9)$ & $N(A_{11}D_7E_6)$ & $2$ & $1^{-8}2^{16}$       & $A_1B_5D_6F_4$        & $B_5E_{7,2}F_4$     & $53$\\
$(10)$ & $N(A_{11}D_7E_6)$ & $2$ & $1^{-8}2^{16}$       & $B_2B_4C_4C_6$        & $B_4C_6^2$          & $48$\\
$(11)$ & $N(A_4^6)$        & $5$ & $1^{-1}5^5$          & $A_4\C^4$             & $A_{4,5}^2$         &$\s9$\\
$(12\mathrm{a})$& $N(D_{10}E_7^2)$&$4$&$1^22^{-9}4^{10}$& $A_1A_4A_7B_3\C$      & $A_4A_{9,2}B_3$     & $40$\\
$(12\mathrm{b})$& $N(D_{10}E_7^2)$&$4$&$2^{-4}4^8$      & $A_1A_4A_7B_3\C$      & $A_4A_{9,2}B_3$     & $40$\\
$(13)$ & $N(D_{10}E_7^2)$  & $4$ & $2^{-4}4^8$          & $A_7B_2B_6\C$         & $B_6C_{10}$         & $56$\\
$(14)$ & $N(E_6^4)$        & $6$ & $1^32^{-3}3^{-9}6^9$ & $A_1A_2C_4\C$         & $A_1C_{5,3}G_{2,2}$ & $21$\\
$(15)$ & $N(D_4^6)$        & $8$ & $4^{-2}8^4$          & $A_3\C^7$             & $A_{1,2}A_{3,4}^3$  &$\s7$
\end{tabular}
\renewcommand{\arraystretch}{1}
\end{equation*}
\end{prop}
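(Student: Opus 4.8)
The plan is to treat the fifteen rows uniformly as applications of the lattice-orbifold machinery of \cite{EMS15}, Section~8 and \cite{Moe16}, Sections~5.6, 6.2 and~6.3, extracting the weight-one Lie algebra in each case. First, for a given row, I would exhibit an isometry $\nu\in\Aut(L)=O(L)$ of the stated order $n$ whose frame (cycle) shape on $L\otimes_\Z\C$ is $s$. Since $\sum_k k\,b_k=24$ for $s=\prod_k k^{b_k}$, the shape prescribes the characteristic polynomial $\prod_k(t^k-1)^{b_k}$ of $\nu$ and hence the rank and glue data of the fixed sublattice; using the known structure of the automorphism groups of the Niemeier lattices (the Weyl group extended by diagram and glue symmetries) one locates a conjugacy class realising $s$, with \texttt{Magma} assisting in the verification and in the bookkeeping of conjugacy classes.

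Next I would pass to a standard lift $\hat\nu\in\Aut(V_L)$ and check that it again has order $n$ via the order-doubling criterion (for even $n$, that $\langle\alpha,\nu^{n/2}\alpha\rangle$ is even for all $\alpha\in L$). I would then compute the conformal weights $\rho(V_L(\hat\nu^i))$ from the twisted vacuum-energy formula --- the sum $\tfrac14\sum_{j}\tfrac jn\bigl(1-\tfrac jn\bigr)\,m_j$ over the eigenvalue multiplicities $m_j$ of $\nu^i$ on $L\otimes_\Z\C$, plus the minimal value of $\tfrac12\langle\lambda,\lambda\rangle$ over the relevant coset $\lambda$ of the $\nu^i$-invariant lattice --- and verify that $\hat\nu$ is of type $n\{0\}$, so that $V_L^{\hat\nu}$ satisfies the positivity assumption and the orbifold $V_L^{\orb(\hat\nu)}$ exists and is \strathol{} by \cite{EMS15}. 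Where $\rho(V_L(\hat\nu^i))\ge1$ for all $i\in\Z_n\setminus\{0\}$ the term $R$ vanishes and I apply Corollary~\ref{cor:dimform}; otherwise I retain the general Theorem~\ref{thm:dimform}. The input dimensions $\dim((V_L)_1^{\nu^d})$, $d\mid n$, come from the frame shape of $\nu^d$ (the toral part $\dim\hh^{\nu^d}$) together with the count of $\hat\nu^d$-fixed root vectors, i.e.\ from Kac's theory (Section~\ref{sec:kac}) applied to the automorphism that $\nu$ induces on the reductive Lie algebra $(V_L)_1$; this simultaneously produces the fixed-point subalgebra $(V_L^{\hat\nu})_1$ recorded in the fifth column.

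With $\dim((V_L^{\orb(\hat\nu)})_1)$ in hand I identify the isomorphism type as follows. By Theorem~\ref{thm:schellekens} the answer is one of the $71$ entries, and the level constraint $h_i^\vee/k_i=(\dim(V_1)-24)/24$ from \eqref{eq:dm} fixes the level of every simple summand once its type and the total dimension are known; this cuts the candidates down to the entries of the correct dimension. To single out the right one I use that $(V_L^{\hat\nu})_1$ must embed into $(V_L^{\orb(\hat\nu)})_1$ as the fixed-point subalgebra of an order-$n$ automorphism (the inverse-orbifold automorphism of Section~\ref{sec:invorb}), which via Kac's theory severely restricts the admissible ambient algebra, together with the rank, read off from the $\hat\nu$-invariant Cartan plus any toral weight-one states from the twisted sectors. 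In the residual ambiguous cases I would compute the weight-one spaces $(W^{(i,0)})_1$ of the twisted sectors explicitly --- their lowest-weight vectors being governed by the minimal-norm coset vectors and the twisted Fock space --- and read off the root system and brackets directly, as in \cite{Moe16}, Section~5.6 (using \texttt{Sage}).

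The main obstacle is this last identification step: the dimension formula and the embedded fixed-point subalgebra pin down the dimension and constrain the type, but distinguishing between different entries of equal dimension on Schellekens' list --- and certifying the precise affine structure, including all levels $k_i$ --- generally forces an explicit description of the twisted weight-one states and their Lie brackets. Finding and verifying, for each row, a lattice isometry of exactly the prescribed frame shape inside $O(L)$ is a second, more combinatorial, difficulty, and is where the computer-algebra computations enter.
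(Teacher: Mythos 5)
Your proposal follows essentially the same route as the paper's proof sketch: locate the isometry classes of the prescribed cycle shapes with computer algebra, take standard lifts, check absence of order doubling and type $n\{0\}$, compute $\dim((V_L^{\orb(\hat\nu)})_1)$ via Corollary~\ref{cor:dimform} (or the explicit twisted-sector method of \cite{EMS15}, Section~8), and then pin down the affine structure by combining Schellekens' list with Kac's theory applied to the inverse orbifold automorphism fixing $(V_L^{\hat\nu})_1$. The paper does exactly this, including the caveat that one case (their case~(11)) needs extra arguments beyond the dimension-plus-fixed-point-subalgebra criterion, which your final paragraph correctly anticipates.
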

Note that the automorphisms $\hat\nu$ in cases (12a) and (12b) are in fact conjugate in $\Aut(V_L)$ as is shown in Proposition~\ref{prop:part3} while the lattice automorphisms $\nu$ they are lifted from cannot be conjugate in $\Aut(L)$ because they do not have the same cycle shape.
\begin{proof}[Sketch of Proof]
For cases (1), (2) and (15) we let $\nu$ be the automorphism, unique up to conjugacy, specified by the following conditions:
\begin{enumerate}
\item[(1)] There are exactly two conjugacy classes in $\Aut(N(A_9^2D_6))$ of cycle shape $1^{-8}2^{16}$ and length $113\,400$.
For exactly one of them no order doubling occurs. This is the one we choose as $\nu$.
\item[(2)] There are exactly two conjugacy classes in $\Aut(N(A_7^2D_5^2))$ of cycle shape $1^{-8}2^{16}$ and length $403\,200$.
For both of them no order doubling occurs. We choose as $\nu$ the only one with $(V_L^{\hat\nu})_1\cong A_1^2A_3A_7B_2^2$.
\item[(15)] There are exactly four conjugacy classes in $\Aut(N(D_4^6))$ of cycle shape $4^{-2}8^4$ and length $26\,418\,075\,402\,240$.
Two of them exhibit no order doubling. We choose as $\nu$ the only one of those two with $(V_L^{\hat\nu})_1\cong A_3\C^7$.
\end{enumerate}
For the remaining cases we let $\nu$ be a representative of the unique conjugacy class
\begin{enumerate}
\item[(3)] in $\Aut(N(A_8^3))$ of cycle shape $1^{-8}2^{16}$ and length $1\,088\,640$,
\item[(4)] in $\Aut(N(E_8^3))$ of cycle shape $1^{-8}2^{16}$ and length $2\,090\,188\,800$,
\item[(5)] in $\Aut(N(A_5^4D_4))$ of cycle shape $1^{-8}2^{16}$ and length $518\,400$,
\item[(6)] in $\Aut(N(E_6^4))$ of cycle shape $1^{-8}2^{16}$ and length $12\,150$,
\item[(7)] in $\Aut(N(A_4^6))$ of cycle shape $1^{-8}2^{16}$ and length $216\,000$,
\item[(8)] in $\Aut(N(A_2^{12}))$ of cycle shape $1^{-8}2^{16}$ and length $641\,520$,
\item[(9)] in $\Aut(N(A_{11}D_7E_6))$ of cycle shape $1^{-8}2^{16}$ and length $1\,575$,
\item[(10)] in $\Aut(N(A_{11}D_7E_6))$ of cycle shape $1^{-8}2^{16}$ and length $218\,295$,
\item[(11)] in $\Aut(N(A_4^6))$ of cycle shape $1^{-1}5^5$ and length $119\,439\,360\,000$,
\item[(12a)] in $\Aut(N(D_{10}E_7^2))$ of cycle shape $1^22^{-9}4^{10}$ and length $877\,879\,296\,000$,
\item[(12b)] in $\Aut(N(D_{10}E_7^2))$ of cycle shape $2^{-4}4^8$ and length $1\,316\,818\,944\,000$,
\item[(13)] in $\Aut(N(D_{10}E_7^2))$ of cycle shape $2^{-4}4^8$ and length $18\,289\,152\,000$,
\item[(14)] in $\Aut(N(E_6^4))$ of cycle shape $1^32^{-3}3^{-9}6^9$ and length $1\,719\,926\,784\,000$,
\end{enumerate}
respectively. For each case (1) to (15) we let $\hat\nu$ be a standard lift of $\nu$. No order doubling occurs, and the automorphism $\hat\nu$ of $V_L$ is of type $n\{0\}$.

We note that the conformal weights of the twisted modules satisfy $\rho(V(\hat{\nu}^i))\geq 1$ for all $i\in\Z_n\setminus\{0\}$. Hence the dimension of the weight-one Lie algebra $(V_L^{\orb(\hat\nu)})_1$ can be computed using Corollary~\ref{cor:dimform}. (Alternatively, this dimension can be obtained by the method described in Section~8 of \cite{EMS15}.)

For each case (1) to (15) we know $\dim((V_L^{\orb(\hat\nu)})_1)$, and we know that $(V_L^{\orb(\hat\nu)})_1$ contains $(V_L^{\hat\nu})_1$ as the fixed-point Lie subalgebra under an automorphism of order dividing $n$, namely under the inverse orbifold automorphism with respect to $\hat\nu$. This is enough to uniquely determine the Lie algebra structure of $(V_L^{\orb(\hat\nu)})_1$ using Kac's theory (see Section~\ref{sec:kac}) and Schellekens' list. Actually, for the case (11) we need some additional arguments, but this case is described in detail in \cite{EMS15}.
\end{proof}


\section{Part 2: Inner Automorphism}\label{sec:part2}

In the following let $V$ be a \strathol{} \voa{} of central charge 24 with semisimple weight-one Lie algebra $V_1$. Let $\hh$ be a Cartan subalgebra of $V_1$. For each of the fifteen affine structures appearing on the right-hand side of the table in Proposition~\ref{prop:part1} we fix a semisimple element $h\in\hh$ (so that $h_0$ acts semisimply on $V$) as in the following table such that $\sigma_h=\e^{-(2\pi\i)h_0}$ has order~$n$:
\begin{equation*}\label{page:table}
\renewcommand{\arraystretch}{1.4}
\setlength{\tabcolsep}{.9\tabcolsep}
\begin{tabular}{rlllrcl}
No.\ &$V_1$& $h$ & ord.\ on $V_1$ & $n$ & $\langle h,h\rangle$ & $V^{\sigma_h}_1$ \\\hline
 $(1)$ & $A_5C_5E_{6,2}$     & $(0,0,\frac{\Lambda_1^\vee+\Lambda_6^\vee}{2})$                                   & $(1,1,2)$     &$2$&$2$& $A_5C_5D_5\C$\\
 $(2)$ & $A_3A_{7,2}C_3^2$   & $(0,0,\frac{\Lambda_2^\vee}{2},\frac{\Lambda_2^\vee}{2})$                         & $(1,1,2,2)$   &$2$&$2$& $A_1^2A_3A_7B_2^2$\\
 $(3)$ & $A_{8,2}F_{4,2}$    & $(0,\frac{\Lambda_4^\vee}{2})$                                                    & $(1,2)$       &$2$&$2$& $A_8B_4$\\
 $(4)$ & $B_8E_{8,2}$        & $(\frac{\Lambda_8^\vee}{2},0)$                                                    & $(2,1)$       &$2$&$2$& $D_8E_8$\\
 $(5)$ & $A_2^2A_{5,2}^2B_2$ & $(0,0,\frac{\Lambda_2^\vee+\Lambda_4^\vee}{2},0,0)$                               & $(1,1,2,1,1)$ &$2$&$2$& $A_1A_2^2A_3A_5B_2\C$\\
 $(6)$ & $C_8F_4^2$          & $(\frac{\Lambda_4^\vee}{2},0,0)$                                                  & $(2,1,1)$     &$2$&$2$& $C_4^2F_4^2$\\
 $(7)$ & $A_{4,2}^2C_{4,2}$  & $(0,0,\frac{\Lambda_2^\vee}{2})$                                                  & $(1,1,2)$     &$2$&$2$& $A_4^2B_2^2$\\
 $(8)$ & $A_{2,2}^4D_{4,4}$  & $(0,0,0,0,\frac{\Lambda_2^\vee}{2})$                                              & $(1,1,1,1,2)$ &$2$&$2$& $A_1^4A_2^4$\\
 $(9)$ & $B_5E_{7,2}F_4$     & $(0,\frac{\Lambda_6^\vee}{2},0)$                                                  & $(1,2,1)$     &$2$&$2$& $A_1B_5D_6F_4$\\
$(10)$ & $B_4C_6^2$          & $(0,0,\frac{\Lambda_4^\vee}{2})$                                                  & $(1,1,2)$     &$2$&$2$& $B_2B_4C_4C_6$\\
$(11)$ & $A_{4,5}^2$         & $(0,\frac{\Lambda_1^\vee+\Lambda_2^\vee+\Lambda_3^\vee+\Lambda_4^\vee}{5})$       & $(1,5)$       &$5$&$2$& $A_4\C^4$\\
$(12)$ & $A_4A_{9,2}B_3$     & $(0,\frac{\Lambda_1^\vee+3\Lambda_3^\vee}{4},0)$                                  & $(1,4,1)$     &$4$&$3$& $A_1A_4A_7B_3\C$\\
$(13)$ & $B_6C_{10}$         & $(0,\frac{\Lambda_2^\vee}{4}+\frac{\Lambda_{10}^\vee}{2})$                        & $(1,4)$       &$4$&$2$& $A_7B_2B_6\C$\\
$(14)$ & $A_1C_{5,3}G_{2,2}$ & $(0,\frac{\Lambda_4^\vee}{6}+\frac{2\Lambda^\vee_5}{3},\frac{\Lambda_1^\vee}{3})$ & $(1,6,3)$     &$6$&$8$& $A_1A_2C_4\C$\\
$(15)$ & $A_{1,2}A_{3,4}^3$  & $(\frac{\Lambda_1^\vee}{2},x,x,0)$                                                & $(2,8,8,1)$   &$8$&$2$& $A_3\C^7$
\end{tabular}
\renewcommand{\arraystretch}{1}
\end{equation*}
with $x=\frac{\Lambda_1^\vee+\Lambda_2^\vee+3\Lambda_3^\vee}{8}$. We give $h$ in terms of the fundamental coweights (as labelled in \cite{Hum73}, for example).
The fixed-point Lie subalgebra $V^{\sigma_h}_1$ can be easily read off from the definitions of $h$, using Kac's theory (see Section~\ref{sec:kac}), and we observe that $V^{\sigma_h}_1\cong (V_L^{\hat\nu})_1$ with $V_L^{\hat\nu}$ from the previous section. Of course, $h$ is chosen precisely such that this is the case.

The order $n$ of $\sigma_h$ is determined as follows: The order of $\sigma_h$ restricted to the semisimple Lie algebra $V_1$ is clearly a divisor of $n$. On the other hand, if $h\in(1/k)Q^\vee$ for some $k\in\Ns$, then $\langle h,\lambda\rangle\in(1/k)\Z$ for any weight $\lambda$ in the weight lattice $P$ and hence $\sigma_h$ has order dividing $k$ on the $V_1$-module $V$. In the above fifteen cases, it turns out that the upper and lower bounds coincide, i.e. $\ord(\sigma_h|_{V_1})=n$ and $h\in(1/n)Q^\vee$, so that $\ord(\sigma_h)=n$.

Since $\sigma_h$ has order~$n$, the eigenvalues of $h_0$ lie in $(1/n)\Z$ as does the value of $\langle h,h\rangle/2$. The eigenvalues of $L_0$ on $V$ lie in $\Z$. From formula~\eqref{eq:weights} for the weight grading of $V(\sigma_h)$ we conclude that the weight grading and hence the conformal weight of $V(\sigma_h)$ is in $(1/n)\Z$. Hence $\sigma_h$ is of type $n\{0\}$. One can check that $\rho(V(\sigma_h^i))>0$ for all $i\in\Z_n\setminus\{0\}$ so that $V^{\sigma_h}$ satisfies the positivity assumption. Indeed, we shall show in Lemma~\ref{lem:confgeqone} below that $\rho(V(\sigma_h^i))\geq1$ holds for each of the above cases. So we can apply the orbifold construction (see Section~\ref{sec:orbifold}) and obtain the \strathol{} \voa{} $V^{\orb(\sigma_h)}$ of central charge $c=24$.

The following section is aimed at the computation of a lower bound for the conformal weight of the twisted modules $V(\sigma_h^i)$, $i\in\Z_n\setminus\{0\}$. Since we do not know the complete \voa{} structure of $V$ at this point, the approach will be rather indirect. The idea for the combinatorial argument in the next section is from \cite{LS16b}.

\subsection{Lower Bound for Conformal Weight}
We continue in the above setting, considering the decomposition $V_1=\g_1\oplus\ldots\oplus\g_r$ of $V_1$ into simple ideals. As explained in Section~\ref{sec:affine}, $V$ decomposes into a direct sum~\eqref{eq:Vdecomp} of irreducible $\langle V_1\rangle$-modules of non-negative integer conformal weight. While the conformal weight of $\langle V_1\rangle$ is $0$, the conformal weights (given by formula~\eqref{eq:confsum}) of all other irreducible $\langle V_1\rangle$-modules appearing in the decomposition are at least $2$.

Now we study the conformal weight of the unique irreducible $\sigma_h$-twisted $V$-module $V(\sigma_h)=V^{(h)}$. It is clear that $V(\sigma_h)$ decomposes into the direct sum of irreducible $\sigma_h$-twisted $\langle V_1\rangle$-modules $M^{(h)}$ for those modules $M$ appearing in the decomposition of $V$ into irreducible $\langle V_1\rangle$-modules.

\begin{lem}[\cite{LS16b}, Lemma~2.7]\label{lem:conftwisted}
Let $V$ be a \strathol{} \voa{} of central charge 24 with $V_1=\g_1\oplus\ldots\oplus\g_r$ semisimple and $h=(h_1,\ldots,h_r)$ a semisimple element in the Cartan subalgebra $\hh$ of $V_1$ such that $\sigma_h$ has finite order. In addition we assume that
\begin{equation}\label{eq:rootcond}
\alpha(h_i)\geq-1
\end{equation}
for all roots $\alpha\in\Phi(\g_i)$ and $i=1,\ldots,r$. Let $M\cong L_{\hat\g_1}(k_1,\lambda_1)\otimes\ldots\otimes L_{\hat\g_r}(k_r,\lambda_r)$ be an irreducible $\langle V_1\rangle$-submodule of $V$. Then the conformal weight of the irreducible $\sigma_h$-twisted $\langle V_1\rangle$-module $M^{(h)}$ is given by
\begin{equation*}
\rho(M^{(h)})=\rho(M)+\sum_{i=1}^r\min\{\mu(h_i)\,|\,\mu\in\Pi(\lambda_i,\g_i)\}+\frac{\langle h,h\rangle}{2}
\end{equation*}
where $\Pi(\lambda_i,\g_i)$ is the set of all weights of the irreducible $\g_i$-module $L_{\g_i}(\lambda_i)$ with highest weight $\lambda_i$.
\end{lem}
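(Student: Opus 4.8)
The starting point is the weight-grading formula~\eqref{eq:weights}. Applying it within the \vosa{} $\langle V_1\rangle$ (which is self-contragredient of CFT-type, so that $L_1h=0$ holds) to the element $v=h\in V_1$, the $\sigma_h$-twisted module $M^{(h)}$ satisfies
\begin{equation*}
L_0^{M^{(h)}}=L_0^M+h_0+\frac{1}{2}\langle h,h\rangle\,\id_M .
\end{equation*}
Since $h$ has conformal weight~$1$, its zero mode $h_0$ acts weight-preservingly and hence commutes with $L_0^M$; both operators act semisimply on $M$ and can be simultaneously diagonalised, so $\rho(M^{(h)})$ equals the smallest eigenvalue of $L_0^M+h_0$ plus the constant $\langle h,h\rangle/2$. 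As $M\cong L_{\hat\g_1}(k_1,\lambda_1)\otimes\ldots\otimes L_{\hat\g_r}(k_r,\lambda_r)$ and both $L_0^M=\sum_iL_0^{(i)}$ and $h_0=\sum_i(h_i)_0$ split over the tensor factors, this smallest eigenvalue factorises as the sum over $i$ of the smallest eigenvalues of $L_0^{(i)}+(h_i)_0$, reducing the problem to a single simple factor $L_{\hat\g}(k,\lambda)$ with Cartan element $h$ (suppressing the index $i$).

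For a single factor I would study the joint $(L_0,\hh)$-weights. The lowest $L_0$-graded piece is the finite-dimensional module $L_\g(\lambda)$, sitting at $L_0=\rho(L_{\hat\g}(k,\lambda))$, on which $h_0$ acts with eigenvalues $\mu(h)$, $\mu\in\Pi(\lambda,\g)$; this yields the candidate minimum $\rho(L_{\hat\g}(k,\lambda))+\min\{\mu(h)\mid\mu\in\Pi(\lambda,\g)\}$. The key step is to rule out any smaller value higher up. By the Poincar\'e--Birkhoff--Witt theorem, every vector at $L_0=\rho(L_{\hat\g}(k,\lambda))+m$ with $m\geq1$ is a combination of monomials $(a^{(1)}\otimes t^{-n_1})\cdots(a^{(s)}\otimes t^{-n_s})w$ with $w\in L_\g(\lambda)$ of weight $\mu$, each $n_t\geq1$, $\sum_tn_t=m$, and each $a^{(t)}$ of $\hh$-weight a root $\beta_t\in\Phi(\g)$ (or zero). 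Such a vector has $\hh$-weight $\nu=\mu+\sum_t\beta_t$, so that
\begin{equation*}
L_0+h_0=\rho(L_{\hat\g}(k,\lambda))+m+\nu(h)=\rho(L_{\hat\g}(k,\lambda))+\mu(h)+\sum_{t=1}^s\bigl(n_t+\beta_t(h)\bigr).
\end{equation*}
The hypothesis $\alpha(h)\geq-1$ for all $\alpha\in\Phi(\g)$ gives $\beta_t(h)\geq-1$ (with value $0$ when $\beta_t=0$), while $n_t\geq1$; hence each summand $n_t+\beta_t(h)\geq0$ and $L_0+h_0\geq\rho(L_{\hat\g}(k,\lambda))+\mu(h)$. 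Thus the minimum over the factor is attained on the bottom piece and equals $\rho(L_{\hat\g}(k,\lambda))+\min\{\mu(h)\mid\mu\in\Pi(\lambda,\g)\}$.

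Reassembling the factors and using $\sum_i\rho(L_{\hat\g_i}(k_i,\lambda_i))=\rho(M)$ from~\eqref{eq:confsum} gives
\begin{equation*}
\rho(M^{(h)})=\rho(M)+\sum_{i=1}^r\min\{\mu(h_i)\mid\mu\in\Pi(\lambda_i,\g_i)\}+\frac{\langle h,h\rangle}{2},
\end{equation*}
as claimed. I expect the main obstacle to be precisely the inequality in the second paragraph: one must see that $\alpha(h)\geq-1$ is exactly the condition making $n_t+\beta_t(h)\geq0$, so that applying negative-degree current modes (each of which raises $L_0$ by $n_t\geq1$) can never lower $L_0+h_0$, and the infimum therefore sits at the bottom of each affine module rather than at some deeper graded piece. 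The same inequality also confirms that $L_0^M+h_0$ is bounded below with its infimum attained, so that $M^{(h)}$ indeed has a well-defined conformal weight.
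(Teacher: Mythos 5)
Your proof is correct. The paper itself gives no argument for this lemma --- it is quoted verbatim from \cite{LS16b}, Lemma~2.7 --- and your derivation is essentially the standard one found there: apply Li's $\Delta$-operator shift \eqref{eq:weights} inside $\langle V_1\rangle$, reduce to one affine factor, and use the PBW spanning set together with $n_t+\beta_t(h)\geq 0$ (which is exactly where the hypothesis $\alpha(h_i)\geq-1$ enters) to locate the minimum of $L_0+h_0$ on the bottom piece $L_{\g}(\lambda)$, where it is attained.
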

Note that condition~\eqref{eq:rootcond} is essential for this formula. Also, since $\sigma_h$ has finite order, $h$ is a $\Q$-linear combination of the fundamental coweights.
\begin{rem}
Let $\g$ be a simple Lie algebra, $\lambda\in P_+$ a dominant integral weight and $h$ some element in the Cartan subalgebra of $\g$. If $h$ is an $\R$-linear combination of the fundamental coweights, the minimum $\min\{\mu(h)\,|\,\mu\in\Pi(\lambda,\g)\}$ can be computed as follows. Clearly, the minimum does not change if we modify $h$ by an element in the Weyl group $W$ of $\g$. Note that there is an element $w$ in the Weyl group such that $h_-:=wh$ is a $\R_{\leq0}$-linear combination of the fundamental coweights, and hence $\min\{\mu(h)\,|\,\mu\in\Pi(\lambda,\g)\}=\lambda(h_-)$.
\end{rem}

It is easy to check:
\begin{lem}\label{lem:conftwisted2}
For each case (1) to (15) in the table on p.~\pageref{page:table} the vector $h$ fulfils the assumptions of Lemma~\ref{lem:conftwisted}. In particular, for every $i$, $\alpha(h_i)\geq-1$ for all roots $\alpha\in\Phi(\g_i)$ of the simple component $\g_i$ of $V_1$.
\end{lem}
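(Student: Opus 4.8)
The plan is to reduce condition~\eqref{eq:rootcond} to one elementary inequality per simple component, checked at the highest root. First I would record that in every case in the table on p.~\pageref{page:table} each component $h_i$ is a \emph{non-negative} rational combination of the fundamental coweights of $\g_i$: writing $h_i=\sum_j c_{i,j}\Lambda_j^\vee$, all $c_{i,j}\geq0$. Since the fundamental coweights are dual to the simple roots, $\alpha_j(\Lambda_k^\vee)=\delta_{j,k}$, so a root $\alpha=\sum_j m_j\alpha_j\in\Phi(\g_i)$ pairs with $h_i$ as $\alpha(h_i)=\sum_j m_j c_{i,j}$. The key observation is that, because $h_i$ is dominant, the minimum of $\alpha(h_i)$ over $\alpha\in\Phi(\g_i)$ is attained at the lowest root $-\theta$: for any root $\alpha$ the difference $\theta-\alpha$ is a non-negative integer combination of simple roots, whence $\alpha(h_i)\leq\theta(h_i)$ since all $c_{i,j}\geq0$, and then $\alpha(h_i)\geq-\theta(h_i)$ follows by replacing $\alpha$ with $-\alpha$ and using $\Phi(\g_i)=-\Phi(\g_i)$.

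Consequently condition~\eqref{eq:rootcond} for the $i$-th component is equivalent to the single inequality
\begin{equation*}
\theta(h_i)=\sum_j a_j\,c_{i,j}\leq1,
\end{equation*}
where the $a_j$ are the marks of $\g_i$ defined by $\theta=\sum_j a_j\alpha_j$. It then remains to verify this finite list of numerical inequalities, reading the marks $a_j$ off the tables referenced in Section~\ref{sec:la} and the coefficients $c_{i,j}$ off the table on p.~\pageref{page:table}. I would run through the fifteen cases component by component: for instance the $E_6$-component of case~(1) has $h_i=(\Lambda_1^\vee+\Lambda_6^\vee)/2$ with $a_1=a_6=1$, giving $\theta(h_i)=1$; the $C_5$-component of case~(14) has $h_i=\Lambda_4^\vee/6+2\Lambda_5^\vee/3$ with marks $a_4=2$, $a_5=1$, giving $\theta(h_i)=1/3+2/3=1$; and the $A_4$-component of case~(11) has coefficient $1/5$ at four nodes of mark~$1$, giving $\theta(h_i)=4/5$. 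Every remaining component is dispatched in the same way, and one finds $\theta(h_i)\leq1$ throughout.

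The remaining hypotheses of Lemma~\ref{lem:conftwisted} are already available from the preceding discussion: $V$ is \strathol{} of central charge $24$ with $V_1=\g_1\oplus\ldots\oplus\g_r$ semisimple, $h\in\hh$ is semisimple so that $h_0$ acts semisimply, and $\sigma_h$ has finite order $n$. Hence only~\eqref{eq:rootcond} is at issue, and by the reduction above this is a purely finite computation. I do not expect any conceptual obstacle here; the only thing requiring care is the bookkeeping of conventions---using the labelling of simple roots, fundamental coweights and marks from \cite{Hum73} consistently---together with the observation that several components saturate the bound with $\theta(h_i)=1$, which is precisely why~\eqref{eq:rootcond} is stated with a non-strict inequality and why the lowest root contributes the eigenvalue $-1$ exactly.
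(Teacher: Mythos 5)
Your proof is correct. The paper gives no argument for this lemma beyond the preceding remark ``It is easy to check,'' and your reduction is exactly the right way to make the check easy: since every $h_i$ in the table is a non-negative combination of fundamental coweights, the minimum of $\alpha(h_i)$ over $\alpha\in\Phi(\g_i)$ is $-\theta(h_i)$, so condition~\eqref{eq:rootcond} collapses to the single inequality $\sum_j a_jc_{i,j}\leq1$ per component, which indeed holds (with equality in most cases) for all fifteen entries.
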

The two lemmas allow us to compute the conformal weights of the possible irreducible $\sigma_h$-twisted $\langle V_1\rangle$-modules $M^{(h)}$ appearing in the decomposition of $V(\sigma_h)=V^{(h)}$, which gives a lower bound for $\rho(V(\sigma_h))$.
\begin{lem}\label{lem:confgeqone0}
For each case (1) to (15) in the table on p.~\pageref{page:table} the bound
\begin{equation*}
\rho(V(\sigma_h))\geq 1
\end{equation*}
holds.
\end{lem}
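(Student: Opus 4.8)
The plan is to bound $\rho(V(\sigma_h))$ from below by decomposing the twisted module into twisted modules for the affine subalgebra $\langle V_1\rangle$. Since $V(\sigma_h)=V^{(h)}$ splits into the direct sum of the irreducible $\sigma_h$-twisted $\langle V_1\rangle$-modules $M^{(h)}$, where $M\cong L_{\hat\g_1}(k_1,\lambda_1)\otimes\dots\otimes L_{\hat\g_r}(k_r,\lambda_r)$ runs over the irreducible $\langle V_1\rangle$-submodules occurring in the decomposition~\eqref{eq:Vdecomp} of $V$, we have $\rho(V(\sigma_h))=\min_M\rho(M^{(h)})$. By Lemma~\ref{lem:conftwisted2} the hypothesis~\eqref{eq:rootcond} of Lemma~\ref{lem:conftwisted} is met, so
\begin{equation*}
\rho(M^{(h)})=\rho(M)+\sum_{i=1}^r\min\{\mu(h_i)\mid\mu\in\Pi(\lambda_i,\g_i)\}+\frac{\langle h,h\rangle}{2}.
\end{equation*}
The decisive extra input is the observation following~\eqref{eq:confsum}: every $M$ occurring in $V$ satisfies $\rho(M)=0$ (namely the vacuum module, with all $\lambda_i=0$) or $\rho(M)\geq 2$.

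Writing $f_i(\lambda_i):=\rho_i(\lambda_i)+\min\{\mu(h_i)\mid\mu\in\Pi(\lambda_i,\g_i)\}$, where $\rho_i(\lambda_i)$ denotes the $i$-th summand of~\eqref{eq:confsum}, the formula above reads $\rho(M^{(h)})=\sum_{i=1}^r f_i(\lambda_i)+\langle h,h\rangle/2$. From the column $\langle h,h\rangle$ of the table on p.~\pageref{page:table} we read off $\langle h,h\rangle\geq 2$ in all fifteen cases, so $\langle h,h\rangle/2\geq 1$; in particular the vacuum module alone already gives $\rho(\langle V_1\rangle^{(h)})=\langle h,h\rangle/2\geq 1$. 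Hence the claim reduces to the single inequality $\sum_{i=1}^r f_i(\lambda_i)\geq 0$ for every admissible tuple $(\lambda_1,\dots,\lambda_r)$ with $\rho(M)\geq 2$.

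To make this a finite problem I would compute each $\min\{\mu(h_i)\}$ as in the remark following Lemma~\ref{lem:conftwisted}: conjugating $h_i$ by a Weyl group element to a non-positive combination $h_{i,-}$ of the fundamental coweights, the minimum equals $\lambda_i(h_{i,-})$, which is read off directly from the explicit $h_i$ in the table. As each $\lambda_i$ ranges only over the finitely many dominant weights of level at most $k_i$, there are finitely many tuples, and because $\rho_i(\lambda_i)$ grows quadratically while $\lambda_i(h_{i,-})$ grows only linearly in $\lambda_i$, one has $f_i(\lambda_i)\geq 0$ outside a finite set of small weights. Thus minimising $\sum_i f_i(\lambda_i)$ under the constraint $\sum_i\rho_i(\lambda_i)\geq 2$ is a finite computation in each of the fifteen cases.

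The step I expect to be the main obstacle is that the summands $f_i(\lambda_i)$ can be strictly negative, so the estimate cannot be carried out factor by factor: for instance, for the $A_{1,2}$-factor of case~$(15)$ at $\lambda=\Lambda_1$ one finds $f_i=\frac{3}{16}-\frac14=-\frac{1}{16}<0$. The global hypothesis $\rho(M)\geq 2$ must therefore be used in an essential way. The point is that a factor can contribute negatively only when its weight $\lambda_i$, and hence its partial conformal weight $\rho_i(\lambda_i)$, is small; for $\rho(M)=\sum_i\rho_i(\lambda_i)$ to reach $2$ some factor must then carry a large and strictly positive $f_i$ offsetting the accumulated deficit. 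I would make this precise by bounding, for each simple type and level appearing in the table, the maximal deficit $-f_i(\lambda_i)$ against $\rho_i(\lambda_i)$, and then verifying case by case that these bounds are incompatible with $\sum_i f_i(\lambda_i)<0$ once $\rho(M)\geq 2$; this combinatorial trade-off is exactly the argument borrowed from~\cite{LS16b}.
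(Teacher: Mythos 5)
Your setup is the same as the paper's (decompose $V(\sigma_h)$ into the twisted modules $M^{(h)}$, apply Lemma~\ref{lem:conftwisted} via Lemma~\ref{lem:conftwisted2}, dispose of the vacuum module by $\langle h,h\rangle/2\geq 1$), and your finite combinatorial check does settle thirteen of the fifteen cases. But the final step — "verifying case by case that these bounds are incompatible with $\sum_i f_i(\lambda_i)<0$ once $\rho(M)\geq 2$" — is not just the main obstacle, it is false as stated in cases (11) and (15). In case (11), with $V_1=A_{4,5}^2$, there are eleven tuples of dominant weights of level at most $5$ with $\rho(M)=2$ but $\rho(M^{(h)})=3/5$ or $4/5$ (e.g.\ $([0,0,0,0],[0,2,2,0])$); in case (15), with $V_1=A_{1,2}A_{3,4}^3$, there are seventeen tuples with $\rho(M)\in\{2,3\}$ but $\rho(M^{(h)})=3/4$ or $7/8$. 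So no amount of trading the quadratic growth of $\rho_i(\lambda_i)$ against the linear deficit $\lambda_i(h_{i,-})$ under the sole constraint $\rho(M)\geq 2$ can close these cases: the purely representation-theoretic bound genuinely fails for admissible weights.

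What is missing is the extra input that these problematic modules do not actually occur in the decomposition~\eqref{eq:Vdecomp} of the specific \voa{} $V$. The paper establishes this using the constraints that modular invariance places on the multiplicities $m_{(\lambda_1,\ldots,\lambda_r)}$ (the linear system coming from Theorem~6.1 of \cite{EMS15}): in case (11) an application of the simplex algorithm to this system in non-negative integers shows all problematic multiplicities vanish, and in case (15) the system must additionally be supplemented by simple-current arguments (certain order-$2$ simple currents are forced to occur with multiplicity $1$, and fusion by them forces equalities among multiplicities) before it pins down the decomposition and excludes the seventeen problematic modules. Your proposal, which only uses the level bound and $\rho(M)\geq 2$, has no access to this information, so the argument as written cannot be completed for cases (11) and (15).
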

\begin{proof}
It suffices to show that $\rho(M^{(h)})\geq 1$ for each irreducible component $M$ of the decomposition~\eqref{eq:Vdecomp} of $V$. Consider the irreducible $\langle V_1\rangle$-module
\begin{equation*}
M=L_{\hat\g_1}(k_1,\lambda_1)\otimes\ldots\otimes L_{\hat\g_r}(k_r,\lambda_r).
\end{equation*}
If $(\lambda_1,\ldots,\lambda_r)=(0,\ldots,0)$, then by Lemmas \ref{lem:conftwisted} and \ref{lem:conftwisted2} the conformal weight of $M^{(h)}$ is equal to $\langle h,h\rangle/2\geq1$.

Now we consider $(\lambda_1,\ldots,\lambda_r)\neq(0,\ldots,0)$, in which case $\rho(M)\in\Z_{\geq2}$. Using Lemmas \ref{lem:conftwisted} and \ref{lem:conftwisted2} it is straightforward to compute the conformal weight $\rho(M^{(h)})$ for any irreducible $\langle V_1\rangle$-module $M$. In all the cases except (11) and (15) we find that $\rho(M^{(h)})\geq 1$ for all irreducible $\langle V_1\rangle$-modules $M$ satisfying $\rho(M)\in\Z_{\geq2}$.

Cases (11) and (15) require more involved arguments. We begin with case~(11), where $V_1=A_{4,5}^2$ and $h=(0,(\Lambda_1^\vee+\Lambda_2^\vee+\Lambda_3^\vee+\Lambda_4^\vee)/5)$. Here, there are eleven pairs $(\lambda_1,\lambda_2)$ that cannot be excluded by the simple argument above. More precisely, if the irreducible $\langle V_1\rangle$-module $M$ has highest weight among
\begin{equation*}
([0,0,0,0],[0,2,2,0]),~([0,0,0,0],[1,0,2,2]),~([0,0,0,0],[2,2,0,1]),
\end{equation*}
then $\rho(M^{(h)})=3/5$, and if $M$ has highest weight among
\begin{align*}
&([0,0,0,1],[0,1,2,1]),~([0,0,0,1],[1,2,1,0]),\\
&([0,0,0,1],[2,0,1,2]),~([0,0,0,1],[2,1,0,2]),\\
&([1,0,0,0],[0,1,2,1]),~([1,0,0,0],[1,2,1,0]),\\
&([1,0,0,0],[2,0,1,2]),~([1,0,0,0],[2,1,0,2]),
\end{align*}
then $\rho(M^{(h)})=4/5$. Here we have written the weight $\sum_{i=1}^lm_i\Lambda_i$ as $[m_1,\ldots,m_l]$ in terms of the fundamental weights. In all these cases $\rho(M)=2$.

The \voa{} $\langle V_1\rangle\cong(L_{A_4}(5,0))^{\otimes2}$ has $146$ irreducible modules $M$ of conformal weight $\rho(M)=2$. The identities listed in \cite{EMS15}, Theorem~6.1, are equivalent to a linear system of equations on the subset of the $146$ corresponding multiplicities $m_{(\lambda_1,\lambda_2)}$ in the decomposition~\eqref{eq:Vdecomp}. We seek solutions in non-negative integers to this system. An application of the simplex algorithm shows that all multiplicities vanish except possibly those of the weights
\begin{align*}
&([0,0,0,0],[0,0,0,5]),~([0,1,1,0],[1,2,0,0]),\\
&([2,0,1,0],[2,0,1,0]),~([0,1,1,0],[2,0,0,2])
\end{align*}
and of their images under the action of the group $\Z_2^2$ generated by exchange of the two copies of $A_4$ and the involution $\Lambda_i\mapsto\Lambda_{4-i}$, $i=1,\ldots,4$. Hence, none of the above eleven problematic modules can occur in the decomposition~\eqref{eq:Vdecomp}.

In case~(15) with $V_1=A_{1,2}A_{3,4}^3$ and $h=(\frac{\Lambda_1^\vee}{2},\frac{\Lambda_1^\vee+\Lambda_2^\vee+3\Lambda_3^\vee}{8},\frac{\Lambda_1^\vee+\Lambda_2^\vee+3\Lambda_3^\vee}{8},0)$ we proceed similarly. Here, there are seventeen tuples $(\lambda_1,\ldots,\lambda_4)$ that cannot be excluded by the simple combinatorial argument, namely
\begin{align*}
&([0],[2,1,0],[2,1,0],[0,0,0]),\\
&([1],[1,0,1],[3,0,1],[0,0,0]),
([1],[2,0,0],[2,0,2],[0,0,0]),\\
&([1],[2,0,2],[2,0,0],[0,0,0]),
([1],[3,0,1],[1,0,1],[0,0,0]),\\
&([2],[1,0,1],[2,1,0],[0,0,0]),
([2],[1,1,1],[2,0,0],[0,0,0]),\\
&([2],[2,0,0],[1,1,1],[0,0,0]),
([2],[2,1,0],[1,0,1],[0,0,0])
\end{align*}
with $\rho(M^{(h)})=3/4$ and
\begin{align*}
&([0],[1,0,1],[4,0,0],[0,0,0]),
([0],[4,0,0],[1,0,1],[0,0,0]),\\
&([1],[0,1,0],[4,0,0],[0,0,0]),
([1],[3,0,1],[4,0,0],[0,0,0]),\\
&([1],[4,0,0],[0,1,0],[0,0,0]),
([1],[4,0,0],[3,0,1],[0,0,0]),\\
&([2],[2,1,0],[4,0,0],[0,0,0]),
([2],[4,0,0],[2,1,0],[0,0,0])
\end{align*}
with $\rho(M^{(h)})=7/8$. In all these cases $\rho(M)\in\{2,3\}$.

Again we show that none of these modules occur in the decomposition of $V$ into irreducible modules for $\langle V_1\rangle\cong L_{A_1}(2,0)\otimes(L_{A_3}(4,0))^{\otimes 3}$. Indeed, as described in the proof sketch of Theorem~\ref{thm:schellekens}, the modular invariance implies a large system of linear equations on the multiplicities $m_{(\lambda_1,\ldots,\lambda_4)}$ occurring in the decomposition~\eqref{eq:Vdecomp}. In the present case the system is underdetermined, but somewhat surprisingly the sum $m_{\lambda^{(1)}}+m_{\lambda^{(2)}}+m_{\lambda^{(3)}}$ of the multiplicities of the modules with highest weights
\begin{align*}
\lambda^{(1)}&=([0],[0,0,0],[0,0,0],[0,4,0]),\\
\lambda^{(2)}&=([0],[0,0,0],[0,4,0],[0,0,0]),\\
\lambda^{(3)}&=([0],[0,4,0],[0,0,0],[0,0,0])
\end{align*}
is forced to be $3$.

We observe that each of these $\langle V_1\rangle$-modules is a simple current of order~$2$. Recall that a \emph{simple current} is an irreducible module $M$ with the property that fusion with $M$ acts by permutation on the set of all irreducible modules.

In general, if a simple current $M$ occurs in the decomposition~\eqref{eq:Vdecomp}, then the multiplicities are constant along orbits under fusion by $M$. In particular, the multiplicity of $M$ itself equals that of $\langle V_1\rangle\cong L_{\hat\g_1}(k_1,0)\otimes\ldots\otimes L_{\hat\g_r}(k_r,0)$, which is $1$. It follows in the present case that $m_{\lambda^{(1)}}=m_{\lambda^{(2)}}=m_{\lambda^{(3)}}=1$. There is therefore an action of $\Z_2^3$ on the set of irreducible $\langle V_1\rangle$-modules by permutations, under which the multiplicities of~\eqref{eq:Vdecomp} are constant.

We treat these equalities as extra equations to be added to the system. In the reduced system we observe that the sum $m_{\lambda^{(4)}}+m_{\lambda^{(5)}}+m_{\lambda^{(6)}}$ of the multiplicities of the modules with highest weights 
\begin{align*}
\lambda^{(4)}&=([2],[0,0,0],[0,0,0],[0,0,4]),\\
\lambda^{(5)}&=([2],[0,0,0],[0,0,4],[0,0,0]),\\
\lambda^{(6)}&=([2],[0,0,4],[0,0,0],[0,0,0])
\end{align*}
is forced to be $3$. Again, these modules are simple currents, so we conclude that $m_{\lambda^{(4)}}=m_{\lambda^{(5)}}=m_{\lambda^{(6)}}=1$, and we supplement the system with equations coming from the action by fusion of these modules.

The system, thus supplemented, has a unique solution in positive integers. The corresponding decomposition~\eqref{eq:Vdecomp} is exactly the one given in \cite{Sch93}.

We can easily verify that every irreducible $\langle V_1\rangle$-module $M$ occurring in \eqref{eq:Vdecomp} with positive multiplicity satisfies the property $\rho(M^{(h)})\geq1$, i.e.\ none of the above seventeen problematic modules occurs.
\end{proof}

We would like to extend the result of Lemma~\ref{lem:confgeqone0} to the twisted $V$-modules $V(\sigma_h^i)$ for all $i\in\Z_n\setminus\{0\}$. For $n=2$ we are done, and for $n=3$ it suffices to recall that the contragredient module $(V(\sigma_h))'\cong V(\sigma_h^{-1})=V(\sigma_h^2)$ has the same grading as $V(\sigma_h)$.

For $n>3$ we have to consider modules twisted by $\sigma_h^i=\sigma_{ih}$ for $i>1$. In general, condition~\eqref{eq:rootcond} is not satisfied by $ih=(ih_1,\ldots,ih_r)$, so Lemma~\ref{lem:conftwisted} cannot be applied directly. To circumvent this difficulty we observe that $V$ is graded by the weight lattice relative to the action of $\hh$. Hence for any element $k$ of the coroot lattice $Q^\vee$ the automorphisms $\sigma_h$ and $\sigma_{h+k}$ of $V$ coincide. Furthermore it is always possible to find a representative $i[h]$ of $ih+Q^\vee$ satisfying the condition~\eqref{eq:rootcond}, i.e.\
\begin{equation*}
\alpha(i[h]_j)\geq-1
\end{equation*}
for all roots $\alpha\in\Phi(\g_j)$ and all $j=1,\ldots,r$. (The element $i[h]$ is in general not unique.)

\begin{lem}\label{lem:confgeqone}
For the cases (1) to (15) in the table on p.~\pageref{page:table}
\begin{equation*}
\rho(V(\sigma_h^i))\geq 1
\end{equation*}
for all $i\in\Z_n\setminus\{0\}$.
\end{lem}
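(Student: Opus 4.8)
The plan is to reduce the statement to the already-proved case $i=1$ (Lemma~\ref{lem:confgeqone0}) together with a finite, case-by-case computation for the remaining powers. First I would exploit contragredient duality: the contragredient of the unique irreducible $\sigma_h^i$-twisted module is the unique irreducible $\sigma_h^{-i}$-twisted module, and taking contragredients preserves the $L_0$-grading, so $\rho(V(\sigma_h^i))=\rho(V(\sigma_h^{n-i}))$ for all $i$. Hence it suffices to treat $1\le i\le\lfloor n/2\rfloor$, with $i=1$ supplied by Lemma~\ref{lem:confgeqone0}. For cases (1)--(10), where $n=2$, nothing remains. Only cases (11)--(15) (with $n=5,4,4,6,8$) require additional work, namely the powers $i=2$ for cases (11)--(13), $i=2,3$ for case (14) and $i=2,3,4$ for case (15).

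For each such $i$ I would apply the $i[h]$ device introduced just before the statement. Since $V$ is graded by the weight lattice, $ih$ and the representative $i[h]$ differ by an element of $Q^\vee$ and therefore define the same automorphism, $\sigma_h^i=\sigma_{ih}=\sigma_{i[h]}$; by uniqueness of the irreducible twisted module this gives $V(\sigma_h^i)\cong V^{(i[h])}$. As $i[h]$ satisfies condition~\eqref{eq:rootcond} by construction, Lemma~\ref{lem:conftwisted} applies with $i[h]$ in place of $h$ and yields, for each irreducible $\langle V_1\rangle$-module $M=L_{\hat\g_1}(k_1,\lambda_1)\otimes\dots\otimes L_{\hat\g_r}(k_r,\lambda_r)$ occurring in the decomposition~\eqref{eq:Vdecomp},
\begin{equation*}
\rho(M^{(i[h])})=\rho(M)+\sum_{j=1}^r\min\{\mu(i[h]_j)\mid\mu\in\Pi(\lambda_j,\g_j)\}+\frac{\langle i[h],i[h]\rangle}{2}.
\end{equation*}
Since $\rho(V(\sigma_h^i))$ is the minimum of these values over the modules in~\eqref{eq:Vdecomp}, the task reduces to verifying $\rho(M^{(i[h])})\ge 1$ for each such $M$.

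This check I would carry out exactly as in the proof of Lemma~\ref{lem:confgeqone0}. For the vacuum module the value is $\langle i[h],i[h]\rangle/2$, which I verify to be $\ge 1$. For $(\lambda_1,\dots,\lambda_r)\ne 0$ one has $\rho(M)\ge 2$, and for the large majority of modules the displayed formula already gives $\rho(M^{(i[h])})\ge 1$. The finitely many \emph{problematic} modules with $\rho(M)\ge 2$ but $\rho(M^{(i[h])})<1$ must again be excluded by showing that they cannot occur in~\eqref{eq:Vdecomp}: here I would reuse the constraints on the multiplicities $m_{(\lambda_1,\dots,\lambda_r)}$ coming from modular invariance, and (for case (15)) from the simple-current structure, that were already derived in the proof of Lemma~\ref{lem:confgeqone0}, as these pin down the relevant part of the decomposition independently of $i$.

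I expect the main obstacle to be the higher powers in cases (11) and (15). The set of problematic modules depends on $i$, because a different Weyl-chamber representative $i[h]$ produces different minima $\min\{\mu(i[h]_j)\}$, so the combinatorial verification must be redone for each $i$; in case (15) with $V_1=A_{1,2}A_{3,4}^3$ it rests on the full decomposition~\eqref{eq:Vdecomp} obtained earlier from the simplex-algorithm and simple-current analysis. Choosing convenient representatives $i[h]$ satisfying~\eqref{eq:rootcond} and recomputing the weight minima component by component over the simple factors $\g_j$ is routine but somewhat tedious, and it is this bookkeeping, rather than any conceptual difficulty, that constitutes the bulk of the argument.
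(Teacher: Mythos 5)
Your proposal is correct and follows essentially the same route as the paper: reduce to $1\le i\le\lfloor n/2\rfloor$ via contragredients (the paper phrases this as taking $i[h]=-((n-i)[h])$), choose representatives $i[h]$ of $ih+Q^\vee$ satisfying condition~\eqref{eq:rootcond}, apply Lemma~\ref{lem:conftwisted}, and rerun the combinatorial check of Lemma~\ref{lem:confgeqone0}, reusing the modular-invariance and simple-current constraints on the decomposition~\eqref{eq:Vdecomp} for cases (11) and (15). The paper's proof differs only in that it records the explicit representatives $i[h]$ for each case.
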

\begin{proof}
We proceed as described above, choosing the following representatives $i[h]$ where for $n/2<i<n$ we can of course take $i[h]=-((n-i)[h])$:
\begin{enumerate}
\item[(11)] For $V_1\cong A_{4,5}^2$
\begin{align*}
h&=(0,\frac{\Lambda_1^\vee+\Lambda_2^\vee+\Lambda_3^\vee+\Lambda_4^\vee}{5}),\\
2[h]&=(0,\frac{-3\Lambda_1^\vee+2\Lambda_2^\vee+2\Lambda_3^\vee-3\Lambda_4^\vee}{5}).
\end{align*}
\item[(12)] For $V_1\cong A_4A_{9,2}B_3$
\begin{align*}
h&=(0,\frac{\Lambda_1^\vee+3\Lambda_3^\vee}{4},0),\\
2[h]&=(0,\frac{-\Lambda_1^\vee-\Lambda_3^\vee}{2}+\Lambda_4^\vee-\Lambda_5^\vee+\Lambda_6^\vee-\Lambda_7^\vee+\Lambda_8^\vee-\Lambda_9^\vee,0).
\end{align*}
\item[(13)] For $V_1\cong B_6C_{10}$
\begin{align*}
h&=(0,\frac{\Lambda_2^\vee}{4}+\frac{\Lambda_{10}^\vee}{2}),\\
2[h]&=(0,\frac{\Lambda_2^\vee}{2}-\Lambda_3^\vee+\Lambda_4^\vee-\Lambda_5^\vee+\Lambda_6^\vee-\Lambda_7^\vee+\Lambda_8^\vee-\Lambda_9^\vee+\Lambda_{10}^\vee).
\end{align*}
\item[(14)] For $V_1\cong A_1C_{5,3}G_{2,2}$
\begin{align*}
h&=(0,\frac{\Lambda_4^\vee}{6}+\frac{2\Lambda^\vee_5}{3},\frac{\Lambda^\vee_1}{3}),\\
2[h]&=(0,\frac{\Lambda_4^\vee}{3}-\frac{2\Lambda^\vee_5}{3},\frac{2\Lambda^\vee_1}{3}-\Lambda_2^\vee),\\
3[h]&=(0,-\Lambda_1^\vee+\Lambda_2^\vee-\Lambda_3^\vee+\frac{\Lambda_4^\vee}{2},0).
\end{align*}
\item[(15)] For $V_1\cong A_{1,2}A_{3,4}^3$
\begin{align*}
h&=(\frac{\Lambda_1^\vee}{2},\frac{\Lambda_1^\vee+\Lambda_2^\vee+3\Lambda_3^\vee}{8},\frac{\Lambda_1^\vee+\Lambda_2^\vee+3\Lambda_3^\vee}{8},0),\\
2[h]&=(\Lambda_1^\vee,\frac{-3\Lambda_1^\vee+\Lambda_2^\vee-\Lambda_3^\vee}{4},\frac{-3\Lambda_1^\vee+\Lambda_2^\vee-\Lambda_3^\vee}{4},0),\\
3[h]&=(-\frac{\Lambda_1^\vee}{2},\frac{-5\Lambda_1^\vee+3\Lambda_2^\vee+\Lambda_3^\vee}{8},\frac{-5\Lambda_1^\vee+3\Lambda_2^\vee+\Lambda_3^\vee}{8},0),\\
4[h]&=(0,\frac{\Lambda_1^\vee-\Lambda_2^\vee-\Lambda_3^\vee}{2},\frac{\Lambda_1^\vee-\Lambda_2^\vee-\Lambda_3^\vee}{2},0).
\end{align*}
\end{enumerate}
Then we repeat the proof of Lemma~\ref{lem:confgeqone0} to show that $\rho(V(\sigma_{i[h]}))\geq 1$ for all $i\in\Z_n\setminus\{0\}$. For all cases except for (11) and (15) this works without complication. In cases (11) and (15) we make use of the same information on the decomposition of $V$ as a $\langle V_1\rangle$-module that we used in the proof of Lemma~\ref{lem:confgeqone0}.
\end{proof}

\subsection{Dimension Formula}
With Lemma~\ref{lem:confgeqone} in hand we may apply the dimension formula in Corollary~\ref{cor:dimform} to compute $d=\dim(V_1^{\orb(\sigma_h)})$ in our fifteen cases. This information, along with Kac's theory (see Section~\ref{sec:kac}), will permit us to determine the Lie algebra structure of $V_1^{\orb(\sigma_h)}$.
\begin{prop}[Part 2]\label{prop:part2}
Let $V$ and $\sigma_h$ be as in the table on p.~\pageref{page:table}. Then the Lie algebra structure of $V_1^{\orb(\sigma_h)}$ is as follows:
\begin{equation*}
\renewcommand{\arraystretch}{1.2}
\begin{tabular}{rlrlcl}
No.\ & $V_1$ & $n$ & $V^{\sigma_h}_1$ & $d$ & $V^{\orb(\sigma_h)}_1$\\\hline
 $(1)$& $A_5C_5E_{6,2}$     &$2$& $A_5C_5D_5\C$         & $264$& $A_9^2D_6$     \\
 $(2)$& $A_3A_{7,2}C_3^2$   &$2$& $A_1^2A_3A_7B_2^2$    & $216$& $A_7^2D_5^2$   \\
 $(3)$& $A_{8,2}F_{4,2}$    &$2$& $A_8B_4$              & $240$& $A_8^3$        \\
 $(4)$& $B_8E_{8,2}$        &$2$& $D_8E_8$              & $744$& $E_8^3$        \\
 $(5)$& $A_2^2A_{5,2}^2B_2$ &$2$& $A_1A_2^2A_3A_5B_2\C$ & $168$& $A_5^4D_4$     \\
 $(6)$& $C_8F_4^2$          &$2$& $C_4^2F_4^2$          & $312$& $E_6^4$        \\
 $(7)$& $A_{4,2}^2C_{4,2}$  &$2$& $A_4^2B_2^2$          & $144$& $A_4^6$        \\
 $(8)$& $A_{2,2}^4D_{4,4}$  &$2$& $A_1^4A_2^4$          &$\s96$& $A_2^{12}$     \\
 $(9)$& $B_5E_{7,2}F_4$     &$2$& $A_1B_5D_6F_4$        & $312$& $A_{11}D_7E_6$ \\
$(10)$& $B_4C_6^2$          &$2$& $B_2B_4C_4C_6$        & $312$& $A_{11}D_7E_6$ \\
$(11)$& $A_{4,5}^2$         &$5$& $A_4\C^4$             & $144$& $A_4^6$        \\
$(12)$& $A_4A_{9,2}B_3$     &$4$& $A_1A_4A_7B_3\C$      & $456$& $D_{10}E_7^2$  \\
$(13)$& $B_6C_{10}$         &$4$& $A_7B_2B_6\C$         & $456$& $D_{10}E_7^2$  \\
$(14)$& $A_1C_{5,3}G_{2,2}$ &$6$& $A_1A_2C_4\C$         & $312$& $E_6^4$        \\
$(15)$& $A_{1,2}A_{3,4}^3$  &$8$& $A_3\C^7$             & $168$& $D_4^6$
\end{tabular}
\renewcommand{\arraystretch}{1}
\end{equation*}
Moreover, $V^{\orb(\sigma_h)}$ is isomorphic as \voa{} to $V_L$ where $L$ is exactly the lattice in Proposition~\ref{prop:part1}.
\end{prop}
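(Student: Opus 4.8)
The plan is to establish the two assertions of the proposition in turn: first the Lie algebra structure of $V_1^{\orb(\sigma_h)}$, and then the \voa{} isomorphism $V^{\orb(\sigma_h)}\cong V_L$. By Lemma~\ref{lem:confgeqone} the conformal weights obey $\rho(V(\sigma_h^i))\geq1$ for all $i\in\Z_n\setminus\{0\}$, so the dimension formula of Corollary~\ref{cor:dimform} applies and gives
\begin{equation*}
d:=\dim(V_1^{\orb(\sigma_h)})=24+\sum_{e\mid n}c_e\dim(V_1^{\sigma_h^e}).
\end{equation*}
The required inputs are the dimensions of the fixed-point subalgebras $V_1^{\sigma_h^e}$ for each divisor $e\mid n$. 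For $e=n$ one has $\sigma_h^n=\id$, so $V_1^{\sigma_h^n}=V_1$ with dimension read off from the affine structure in the table; for $e=1$ one uses $V_1^{\sigma_h}\cong(V_L^{\hat\nu})_1$, also given. For the intermediate divisors (which occur only in the cases of order $4$, $6$ and $8$) I would compute $V_1^{\sigma_h^e}$ by applying Kac's theory (Section~\ref{sec:kac}) to the representative $e[h]$ of $eh+Q^\vee$ already exhibited in the proof of Lemma~\ref{lem:confgeqone}, reading off the fixed-point subalgebra and hence its dimension. Substituting these into the formula produces the value $d$ listed in the table; for instance in case~(1) one gets $d=24+3\cdot136-168=264$.

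Next I would pin down the isomorphism type following the strategy of Section~\ref{sec:approach}. Since the values of $d$ exclude the trivial and the $24$-dimensional abelian cases, $V_1^{\orb(\sigma_h)}$ is a semisimple Lie algebra of dimension $d$ appearing on Schellekens' list (Theorem~\ref{thm:schellekens}), and it contains $V_1^{\sigma_h}\cong(V_L^{\hat\nu})_1$ as the fixed-point subalgebra of the inverse orbifold automorphism $\amgis$ (of order dividing $n$) restricted to $V_1^{\orb(\sigma_h)}$. I would therefore enumerate all entries of Schellekens' list of dimension $d$ and, for each, use Kac's theory to test whether $(V_L^{\hat\nu})_1$ can arise as the fixed-point subalgebra of an automorphism of order dividing $n$. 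The content of the claim is that in every one of the fifteen cases precisely one entry survives, namely $(V_L)_1$ for the Niemeier lattice $L$ of Proposition~\ref{prop:part1}. This is the main obstacle: the dimension alone does not single out the entry (for example, $d=312$ occurs in cases (6), (9), (10) and (14) with the two distinct targets $E_6^4$ and $A_{11}D_7E_6$), so the fixed-point constraint must be checked case by case, and this is a finite but list-heavy verification.

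Finally, once $V_1^{\orb(\sigma_h)}\cong(V_L)_1$ is established, its rank is $24$. By the result of \cite{DM04b} that a \strathol{} \voa{} of central charge $24$ whose weight-one Lie algebra has rank $24$ is isomorphic to a lattice \voa{}, we obtain $V^{\orb(\sigma_h)}\cong V_{L'}$ for some Niemeier lattice $L'$. The semisimple part of $(V_{L'})_1$ then has root system and level-one affine structure equal to those of $(V_L)_1$, and since a Niemeier lattice is determined up to isomorphism by its root system, $L'=L$. Hence $V^{\orb(\sigma_h)}\cong V_L$, which is exactly the lattice in Proposition~\ref{prop:part1}, completing the proof.
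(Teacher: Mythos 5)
Your proposal is correct and follows essentially the same route as the paper: apply Corollary~\ref{cor:dimform} (justified by Lemma~\ref{lem:confgeqone}) to compute $d$, then use the fact that $V_1^{\sigma_h}\cong(V_L^{\hat\nu})_1$ is a fixed-point subalgebra of $V_1^{\orb(\sigma_h)}$ under the inverse orbifold automorphism together with Kac's theory to single out the unique entry of dimension $d$ on Schellekens' list, and finally invoke \cite{DM04b} to identify $V^{\orb(\sigma_h)}$ with a lattice \voa{}. Your added remarks — computing the intermediate fixed-point dimensions via the representatives $e[h]$, and pinning down the Niemeier lattice from its root system — only make explicit what the paper leaves as "easily computable" and "indeed we see".
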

\begin{proof}
The dimensions of the fixed-point Lie subalgebras $V_1^{\sigma_h^k}$ are easily computable, and so we may use the dimension formulae (Corollary~\ref{cor:dimform}) to compute $d=\dim(V_1^{\orb(\sigma_h)})$. This yields the values for $d$ listed in the table above. We observe that in each case $\dim(V^{\orb(\sigma_h)}_1)=\dim((V_L)_1)$.

The Lie algebra $V^{\sigma_h}_1$ has to be a fixed-point Lie subalgebra of $V^{\orb(\sigma_h)}_1$ under an automorphism of order dividing $n$, namely under the inverse orbifold automorphism with respect to $\sigma_h$ restricted to $V^{\orb(\sigma_h)}_1$. Then, using Kac's theory we determine that the Lie algebra listed in the proposition for $V^{\orb(\sigma_h)}_1$ is the only Lie algebra of dimension $d$ in Schellekens' list of which $V^{\sigma_h}_1$ is a possible fixed-point Lie subalgebra under an automorphism of order dividing $n$. This determines $V^{\orb(\sigma_h)}_1$.

In all of the cases, the Lie algebra $V^{\orb(\sigma_h)}_1$ has rank 24. By Corollary~1.4 in \cite{DM04b} we conclude that $V^{\orb(\sigma_h)}$ is isomorphic to a lattice \voa{}. Indeed, we see that $V^{\orb(\sigma_h)}\cong V_L$ where $L$ is exactly the Niemeier lattice in the corresponding entry of the table in Proposition~\ref{prop:part1}.
\end{proof}

\begin{rem}
For the cases of order~$n$ a prime, i.e.\ $n=2,5$, there is a nice alternative argument by symmetry. Recall that $V_1=(V_L^{\orb(\hat\nu)})_1$ by definition of $V$ and $V^{\sigma_h}_1\cong(V_L^{\hat\nu})_1$ by the choice of $\sigma_h$. In particular the dimensions coincide. Corollary~\ref{cor:dimform} applied to the orbifold construction with $V$ and $\sigma_h\in\Aut(V)$ and to the one with $V_L$ and $\hat\nu$ yields
\begin{align*}
\dim(V_1)+\dim(V^{\orb(\sigma_h)}_1)&=24+(n+1)\dim(V^{\sigma_h}_1),\\
\dim((V_L)_1)+\dim((V_L^{\orb(\hat\nu)})_1)&=24+(n+1)\dim((V_L^{\hat\nu})_1)
\end{align*}
so that with the above observation we obtain
\begin{equation*}
\dim((V_L)_1)=\dim(V^{\orb(\sigma_h)}_1).
\end{equation*}
\end{rem}

\section{Part 3: Unique Conjugacy Class}\label{sec:part3}
In this section we establish the uniqueness up to conjugacy of automorphisms of lattice \voa{}s satisfying certain conditions.

\subsection{Lemmas}
We begin with some general lemmas. The following is a generalisation of Lemma~3.6 in \cite{LS16b}:
\begin{lem}\label{lem:prop3.6_2}
Let $\g$ be a Lie algebra with a direct-sum decomposition
\begin{equation*}
\g=\g_1\oplus\ldots\oplus\g_n
\end{equation*}
for some $n\in\Ns$, and assume that $g$ is an automorphism of $\g$ of order~$n$ such that
\begin{equation*}
g(\g_i)=\g_{i+1}\text{ for }i=1,\ldots,n-1\text{ and }g(\g_n)=\g_1
\end{equation*}
(so that in particular all the ideals $\g_i$, $i=1,\ldots,n$, are isomorphic). Let $f\in\Inn(\g)$ be an inner automorphism of $\g$, and assume that $(fg)^n=\id$. Then $fg$ is conjugate to $g$ under an inner automorphism of $\g$.
\end{lem}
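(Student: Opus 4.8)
The plan is to reduce this twisted-conjugacy problem to an elementary recursion inside the single summand $\g_1$. First I would record that an inner automorphism of $\g$ necessarily preserves each ideal $\g_i$: for $x=\sum_j x_j$ with $x_j\in\g_j$ one has $\exp(\ad_x)=\prod_j\exp(\ad_{x_j})$, and $\exp(\ad_{x_j})$ acts as the identity on $\g_i$ for $i\neq j$ because $[\g_i,\g_j]=0$. Consequently $f$ decomposes as $f=\bigoplus_i f_i$ with $f_i:=f|_{\g_i}\in\Inn(\g_i)$, and I will seek the conjugating automorphism in the same form $h=\bigoplus_i h_i$ with $h_i\in\Inn(\g_i)$ (any such tuple is automatically inner in $\g$). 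The desired relation $h(fg)h^{-1}=g$ is equivalent to the twisted equation $f=h^{-1}\cdot{}^{g}h$, where ${}^{g}h:=ghg^{-1}$.

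Next I would transport everything to $\g_1$. Since $g$ maps $\g_i$ isomorphically onto $\g_{i+1}$ and $g^n=\id$, the maps $\phi_i:=g^{i-1}|_{\g_1}\colon\g_1\to\g_i$ are isomorphisms with $\phi_1=\id$ and $g|_{\g_i}=\phi_{i+1}\phi_i^{-1}$ (indices mod $n$, so $\phi_{n+1}=\phi_1$ and $g|_{\g_n}=\phi_n^{-1}$). Setting $F_i:=\phi_i^{-1}f_i\phi_i$ and $H_i:=\phi_i^{-1}h_i\phi_i$, which are inner automorphisms of $\g_1$, a direct substitution shows that the components of the twisted equation $f=h^{-1}\cdot{}^{g}h$ become
\begin{equation*}
H_{i-1}=H_iF_i\quad(2\le i\le n),\qquad H_n=H_1F_1.
\end{equation*}
The same bookkeeping gives $(fg)^n|_{\g_1}=F_1F_nF_{n-1}\cdots F_2$, so that the hypothesis $(fg)^n=\id$ translates into $F_1F_nF_{n-1}\cdots F_2=\id_{\g_1}$.

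Finally I would solve this recursion. Starting from $H_n:=F_1$ and defining the remaining automorphisms by $H_{i-1}:=H_iF_i$ yields $H_1=F_1F_nF_{n-1}\cdots F_2=(fg)^n|_{\g_1}=\id$, so the wrap-around equation $H_n=H_1F_1$ holds as well and all $n$ equations are satisfied. Each $H_i$ is a product of inner automorphisms, hence lies in $\Inn(\g_1)$, and transporting back via $h_i:=\phi_iH_i\phi_i^{-1}$ produces an inner automorphism $h=\bigoplus_i h_i$ of $\g$ with $h(fg)h^{-1}=g$, as required.

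I expect the main obstacle to be the index bookkeeping: carefully verifying that the componentwise twisted equation collapses to the clean recursion above and, in particular, that the obstruction product $F_1F_n\cdots F_2$ coincides with the restriction of $(fg)^n$ to $\g_1$ (a harmless cyclic rotation being available from $ab=\id\Leftrightarrow ba=\id$). Everything else—closure of $\Inn$ under the operations used, and the observation that the recursion closes up precisely when the obstruction vanishes—is then formal.
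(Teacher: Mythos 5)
Your argument is correct and is essentially the paper's proof in different clothing: both decompose $f$ into its components $f_i\in\Inn(\g_i)$, observe that $(fg)^n=\id$ collapses to a single componentwise relation among the $f_i$ (your $F_1F_n\cdots F_2=\id$ is the paper's reordered product of the $f_i^{g^j}$), and then build the conjugating inner automorphism as the telescoping partial products of that relation. The only difference is bookkeeping — you transport everything to $\g_1$ via $\phi_i=g^{i-1}|_{\g_1}$, while the paper keeps each component in its own $\g_i$ using the notation $f_i^{g^j}$ — so the two proofs coincide in substance.
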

\begin{proof}
We can write $f$ uniquely as $f=f_1f_2\ldots f_n$ where $f_i\in\Inn(\g_i)$ is an inner automorphism of $\g_i$ and the $f_i$ commute pairwise. Let us define $a^b:=b^{-1}ab$. Then $f_i^{g^j}=g^{-j}f_ig^j\in\Inn(\g_{i-j})$ where we view the subtraction $i-j$ modulo $n$.

Using $g^n=\id$ we rewrite the relation $(fg)^n=\id$ as
\begin{align*}
\id&=(fg)^n=(f_1f_2\ldots f_ng)^n\\
&=(f_1f_2\ldots f_n)(f_1^{g^{n-1}}f_2^{g^{n-1}}\ldots f_n^{g^{n-1}})\ldots(f_1^gf_2^g\ldots f_n^g),
\end{align*}
which we reorder using the commutativity to obtain
\begin{equation*}
\id=\underbrace{(f_1f_n^{g^{n-1}}f_{n-1}^{g^{n-2}}\ldots f_2^g)}_{\in\Inn(\g_1)}\underbrace{(f_2f_1^{g^{n-1}}f_n^{g^{n-2}}\ldots f_3^g)}_{\in\Inn(\g_2)}\ldots\underbrace{(f_nf_{n-1}^{g^{n-1}}\ldots f_1^g)}_{\in\Inn(\g_n)}.
\end{equation*}
This implies that
\begin{align*}
f_1f_n^{g^{n-1}}f_{n-1}^{g^{n-2}}\ldots f_2^g&=\id,\\
f_2f_1^{g^{n-1}}f_n^{g^{n-2}}\ldots f_3^g&=\id,\\
&\;\;\vdots\\
f_nf_{n-1}^{g^{n-1}}\ldots f_1^g&=\id.
\end{align*}
We set
\begin{equation*}
x:=(f_1)(f_2f_1^{g^{n-1}})(f_3f_2^{g^{n-1}}f_1^{g^{n-2}})\ldots(f_{n-1}f_{n-2}^{g^{n-1}}\ldots f_1^{g^2})
\end{equation*}
where the $i$-th bracket is in $\Inn(\g_i)$ and hence all brackets commute. Using the above relations the inverse of $x$ is given by
\begin{equation*}
x^{-1}=(f_n^{g^{n-1}}f_{n-1}^{g^{n-2}}\ldots f_2^g)\ldots(f_n^{g^2}f_{n-1}^g)(f_n^g).
\end{equation*}
A simple calculation now shows that $xgx^{-1}=fg$.
\end{proof}

Recall that $\langle\cdot,\cdot\rangle$ also denotes the symmetric bilinear form on a lattice $L$ or on its complexification $\h=L\otimes_\Z\C$ and that we identify $\h$ with $\{h(-1)\otimes\ee_0\,|\,h\in\h\}\subseteq(V_L)_1$.
\begin{lem}[cf.\ Lemma~3.9 in \cite{LS16b}]\label{lem:trivialonla}
Let $L$ be a positive-definite, even lattice and $R$ the sublattice of $L$ generated by the vectors $\alpha\in L$ of norm $\langle\alpha,\alpha\rangle/2=1$, which we suppose to be of full rank. Let $g$ be an automorphism of the lattice \voa{} $V_L$ whose restriction to the reductive Lie algebra $(V_L)_1$ is trivial. Then $g=\e^{(2\pi\i)h_0}$ for some $h\in R'$, the dual lattice of $R$.
\end{lem}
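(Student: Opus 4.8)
The plan is to determine $g$ completely from its action on the $L$-graded pieces of $V_L=\bigoplus_{\alpha\in L}V_L(\alpha)$, where each $V_L(\alpha)\cong M(1)\otimes\C\ee_\alpha$ is an irreducible module for the Heisenberg subalgebra $M(1)=\langle\hh\rangle$. First I would note that since $g$ acts trivially on $(V_L)_1$, it fixes in particular every $h(-1)\otimes\ee_0\in\hh$; as $M(1)$ is generated as a \voa{} by $\hh$ and $g$ is a \voa{} automorphism, $g$ restricts to the identity on $M(1)$. Consequently $g$ commutes with every mode $h(m)$, $h\in\hh$, $m\in\Z$, and in particular with the zero modes $h(0)$. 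Since the $L$-grading is exactly the joint eigenspace decomposition of the commuting operators $h(0)$ (which act as $\langle h,\alpha\rangle$ on $V_L(\alpha)$), $g$ preserves each $V_L(\alpha)$. As $g$ fixes the conformal vector, it commutes with $L_0$ and preserves the weight grading, so it scales the lowest-weight line $\C\ee_\alpha$ by some $\chi(\alpha)\in\C^\times$; because $V_L(\alpha)$ is generated from $\ee_\alpha$ by the negative modes $h(-m)$, which $g$ fixes, $g$ acts as the single scalar $\chi(\alpha)$ on all of $V_L(\alpha)$.

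Next I would show that $\chi\colon L\to\C^\times$ is a group homomorphism trivial on $R$. Multiplicativity comes from the lattice \voa{} operator product: for a suitable mode the leading term of $(\ee_\alpha)_n\ee_\beta$ is a nonzero multiple of $\ee_{\alpha+\beta}$, and applying $g\bigl((\ee_\alpha)_n\ee_\beta\bigr)=(g\ee_\alpha)_n(g\ee_\beta)=\chi(\alpha)\chi(\beta)(\ee_\alpha)_n\ee_\beta$ and comparing components in $\C\ee_{\alpha+\beta}$ forces $\chi(\alpha+\beta)=\chi(\alpha)\chi(\beta)$. Triviality on $R$ is then immediate: each generator $\alpha$ of $R$ has $\langle\alpha,\alpha\rangle/2=1$, so $\ee_\alpha\in(V_L)_1$ is fixed by $g$, giving $\chi(\alpha)=1$; as these $\alpha$ generate $R$ and $\chi$ is a homomorphism, $\chi|_R=1$.

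It remains to realise $\chi$ as $\e^{(2\pi\i)h_0}$ for some $h\in R'$. For $h\in\hh$ the inner automorphism $\e^{(2\pi\i)h_0}$ commutes with the negative Heisenberg modes and sends $\ee_\alpha\mapsto\e^{2\pi\i\langle h,\alpha\rangle}\ee_\alpha$, hence acts on all of $V_L(\alpha)$ by that same scalar; so $g=\e^{(2\pi\i)h_0}$ as soon as $\e^{2\pi\i\langle h,\alpha\rangle}=\chi(\alpha)$ for all $\alpha\in L$, and the requirement $h\in R'$ is precisely the condition $\langle h,R\rangle\subseteq\Z$, i.e.\ that the automorphism be trivial on the root vectors. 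I expect this existence step to be the main obstacle, and the plan is to handle it by a lattice-index argument. Consider the homomorphism $\Psi\colon R'\to\Hom(L,\C^\times)$, $\Psi(x)(\alpha)=\e^{2\pi\i\langle x,\alpha\rangle}$. Since $R$ is of full rank, $L/R$ is finite and $R\subseteq L\subseteq L'\subseteq R'$; for $x\in R'$ one has $\Psi(x)|_R=1$, so $\Psi$ lands in $\widehat{L/R}=\Hom(L/R,\C^\times)$, a group of order $[L:R]$, while $\ker\Psi=R'\cap L'=L'$. Thus $\Psi$ induces an injection $R'/L'\hookrightarrow\widehat{L/R}$. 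Finally $[R':L']=[L:R]$, which follows from $[R':R]=[R':L'][L':L][L:R]$ together with the facts that $[R':R]$ and $[L':L]$ are the discriminants of $R$ and $L$ and that passing to the index-$[L:R]$ sublattice $R\subseteq L$ scales the discriminant by $[L:R]^2$. Hence $R'/L'$ and $\widehat{L/R}$ have equal order $[L:R]$, so the injection is an isomorphism and $\Psi$ surjects onto $\widehat{L/R}$. Since $\chi$ lies in $\widehat{L/R}$ by the previous paragraph, there is $h\in R'$ with $\Psi(h)=\chi$, and then $g=\e^{(2\pi\i)h_0}$, as claimed.
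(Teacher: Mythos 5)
Your proof is correct, but it takes a more self-contained route than the paper. The paper disposes of the first (and hardest) step by citing Lemma~2.5 of \cite{DN99}: since $g$ fixes the Cartan subalgebra $\hh$ pointwise, that lemma already yields $g=\e^{(2\pi\i)h_0}$ for \emph{some} $h\in\h$, and all that remains is to observe that triviality of $g$ on the weight-one root vectors $\ee_\alpha$, $\langle\alpha,\alpha\rangle=2$, forces $\langle h,\alpha\rangle\in\Z$ on the generators of $R$ and hence $h\in R'$. You instead reconstruct this from scratch: the reduction of $g$ to a character $\chi\in\Hom(L,\C^\times)$ (via preservation of the $L$- and $L_0$-gradings, the scalar action on each $V_L(\alpha)$, and multiplicativity from the leading term of $Y(\ee_\alpha,z)\ee_\beta$) is essentially a proof of the relevant special case of the cited lemma, and is sound. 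The price of avoiding the citation is that you must then \emph{produce} an $h\in R'$ realising $\chi$, which the paper never has to do; your surjectivity argument via $\ker\Psi=R'\cap L'=L'$ and the discriminant identity $[R':L']=[L:R]$ is correct and does the job (one could shortcut it by noting that any character of the free abelian group $L$ trivial on the full-rank sublattice $R$ has finite order and is of the form $\e^{2\pi\i\langle h,\cdot\rangle}$ with $\langle h,R\rangle\subseteq\Z$, i.e.\ $h\in R'$, by non-degeneracy of the form). In short: same conclusion, with your version trading a literature citation for a longer but complete elementary argument.
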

\begin{proof}
Because $g$ acts trivially on the Cartan subalgebra $\hh=\{h(-1)\otimes\ee_0\,|\,h\in\h\}$ of $(V_L)_1$, by Lemma~2.5 in \cite{DN99} it is of the form $g=\e^{(2\pi\i)h_0}$ for some $h\in\h$. But $h$ acts as multiplication by $\langle h,\alpha\rangle$ on the graded piece $V_L(\alpha)\subseteq V_L$, and, by assumption, $\e^{(2\pi\i)\langle h,\alpha\rangle}=1$ for all $\alpha\in L$ of norm $\langle\alpha,\alpha\rangle/2=1$. This means that $\langle h,\alpha\rangle\in\Z$ for all $\alpha\in L$ of norm $\langle\alpha,\alpha\rangle/2=1$ and hence by linearity for all $\alpha\in R$. This proves the statement.
\end{proof}

The following lemma generalises Lemma~3.7 in \cite{LS16b}.
\begin{lem}\label{lem:conjproj}
Let $V$ be a \voa{} of CFT-type and assume that the Lie algebra $V_1$ is reductive. Let $\hh$ be a Cartan subalgebra of $V_1$ and $g$ an automorphism of $V$ of order $n\in\Ns$ such that $g(\hh)=\hh$. For $u\in\hh\subseteq V_1$ consider the inner automorphism $\e^{u_0}\in\Aut(V)$. Then $\e^{u_0}g$ is conjugate in $\Aut(V)$ to $\exp((\frac{1}{n}\sum_{i=0}^{n-1}g^iu)_0)g$ under an inner automorphism of $V$. 
\end{lem}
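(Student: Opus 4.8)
The plan is to exhibit the conjugating element explicitly as an inner automorphism $\e^{w_0}$ with $w\in\hh$, and to reduce the whole statement to the solvability of a single linear equation in $\hh$. Write $\bar u:=\frac1n\sum_{i=0}^{n-1}g^iu$ for the average of $u$ over the cyclic group $\langle g\rangle$; since $g\bar u=\bar u$, the inner automorphism $\e^{\bar u_0}$ is exactly the one on the right-hand side of the claim. I would show that for a suitable $w\in\hh$ conjugation by $\e^{w_0}$ sends $\e^{u_0}g$ to $\e^{\bar u_0}g$, and that the required $w$ exists precisely because $\bar u-u$ lies in the image of $\id-g$ acting on $\hh$.

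First I would record the two elementary facts driving the computation. Because $g$ is a \voa{} automorphism, $g\,v_0\,g^{-1}=(gv)_0$ for every $v\in V_1$, so $g\,\e^{v_0}g^{-1}=\e^{(gv)_0}$; and since $g(\hh)=\hh$, the right-hand side again lies in the family $\{\e^{v_0}\mid v\in\hh\}$. Second, for $v,w\in\hh$ the zero modes commute, $[v_0,w_0]=[v,w]_0=0$, as $\hh$ is abelian, whence $\e^{v_0}\e^{w_0}=\e^{(v+w)_0}$. Combining these, for any $w\in\hh$ (using $g\,\e^{-w_0}=\e^{-(gw)_0}g$ and that $u,w,gw\in\hh$ have commuting zero modes) one computes
\begin{align*}
\e^{w_0}(\e^{u_0}g)\e^{-w_0}
&=\e^{u_0}\e^{w_0}g\,\e^{-w_0}=\e^{u_0}\e^{w_0}\e^{-(gw)_0}g\\
&=\e^{(u+w-gw)_0}g.
\end{align*}
Hence it is enough to find $w\in\hh$ with $u+w-gw=\bar u$, i.e.\ $(\id-g)w=\bar u-u$.

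The solvability of this equation is where the finite order of $g$ enters. As $g^n=\id$, its restriction to $\hh$ is semisimple, so $\hh=\ker(\id-g)\oplus\im(\id-g)$ with $\ker(\id-g)=\hh^g$, and the averaging operator $P:=\frac1n\sum_{i=0}^{n-1}g^i$ is the projection onto $\hh^g$ along $\im(\id-g)$. Therefore $\bar u-u=Pu-u=-(\id-P)u\in\im(\id-g)$, and a solution $w$ exists; one may in fact take $w=\frac1n\sum_{i=0}^{n-1}i\,g^iu$, for which a direct (telescoping) computation using $g^nu=u$ gives $(\id-g)w=\bar u-u$. Then $x:=\e^{w_0}$ is a well-defined inner automorphism of $V$, since $w\in\hh$ is semisimple and hence $w_0$ acts semisimply, and the display yields $x(\e^{u_0}g)x^{-1}=\e^{\bar u_0}g=\exp((\tfrac1n\sum_{i=0}^{n-1}g^iu)_0)g$, as claimed.

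I do not expect a serious obstacle: at its core this is the observation that $\bar u-u$ is a coboundary for the $\langle g\rangle$-action on $\hh$, translated into the conjugation calculus of inner automorphisms. The only points needing care are the bookkeeping in the identity $g\,\e^{-w_0}=\e^{-(gw)_0}g$ and the confirmation that $\e^{w_0}$ is a genuine (inner) \voa{} automorphism, both of which follow at once from $g(\hh)=\hh$ and the semisimplicity of elements of $\hh$ in a CFT-type \voa{} with reductive $V_1$.
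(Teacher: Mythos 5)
Your proof is correct and takes essentially the same approach as the paper: the paper conjugates by exactly the same element $x=\exp((\tfrac1n\sum_{i=1}^{n-1}ig^iu)_0)$ and verifies the same telescoping identity using $g\e^{v_0}g^{-1}=\e^{(gv)_0}$ and the commutativity of zero modes of elements of $\hh$. Your additional discussion of the solvability of $(\id-g)w=\bar u-u$ is sound but becomes redundant once the explicit $w$ is written down.
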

We observe that $u\mapsto\frac{1}{n}\sum_{i=0}^{n-1}g^iu$ is a projection from $\hh$ to the subspace of fixed points under $g$.
\begin{proof}
Note that all elements of $\hh$ commute with one another so that $[u_0,v_0]=0$ for $u,v\in\hh$ by Borcherds' identity. Moreover, $g\e^{u_0}=\e^{(gu)_0}g$ and $(\e^{u_0})^{-1}=\e^{(-u)_0}$ for any $u\in\hh$. We define $x:=\exp((\frac{1}{n}\sum_{i=1}^{n-1}ig^iu)_0)$ and calculate
\begin{align*}
x\e^{u_0}gx^{-1}&=\exp((\frac{1}{n}\sum_{i=1}^{n-1}ig^iu)_0)\exp(u_0)g\exp((-\frac{1}{n}\sum_{i=1}^{n-1}ig^iu)_0)\\
&=\exp((\frac{1}{n}\sum_{i=1}^{n}ig^iu)_0)\exp((-\frac{1}{n}\sum_{i=1}^{n-1}ig^{i+1}u)_0)g=\exp((\frac{1}{n}\sum_{i=0}^{n-1}g^iu)_0)g.
\end{align*}
This proves the statement.
\end{proof}
In the case of a lattice \voa{} $V_L$ associated with a positive-definite, even lattice $L$ choose the Cartan subalgebra $\hh=\{h(-1)\otimes\ee_0\,|\,h\in\h\}\cong\h$ and let $\hat\nu\in\Aut(V_L)$ be an automorphism obtained as lift of a lattice automorphism $\nu\in\Aut(L)$. The above lemma states that $\e^{h_0}\hat\nu$ is conjugate to $\e^{(\pi_\nu h)_0}\hat\nu$ where $\pi_\nu$ is the orthogonal projection from $\h$ to the complexified fixed-point sublattice $L^\nu\otimes_\Z\C$.

\begin{lem}\label{lem:innerouter}
Let $\g$ be a simple Lie algebra not of type $D_4$. Then any two outer automorphisms of $\g$ conjugate in $\Aut(\g)$ are in fact conjugate under an element of $\Inn(\g)$ (and also under an element of $\Aut(\g)\setminus\Inn(\g)$ if the latter is non-empty).
\end{lem}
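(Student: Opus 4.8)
The plan is to reduce the statement to an elementary fact about an index-two subgroup, so that no case-by-case analysis with Kac coordinates is required. The only structural input I would use is that for a simple Lie algebra $\g$ the inner automorphisms form a normal subgroup $\Inn(\g)$ of $\Aut(\g)$ whose quotient $\Out(\g)=\Aut(\g)/\Inn(\g)$ is isomorphic to the automorphism group of the Dynkin diagram of $\g$. Inspecting the diagrams, every simple type \emph{other than} $D_4$ has $\Out(\g)$ either trivial or cyclic of order two; indeed $D_4$ is precisely the exception, since there triality gives $\Out(\g)\cong S_3$. If $\Out(\g)$ is trivial there are no outer automorphisms and the assertion is vacuous, so the substantive case is $\Out(\g)\cong\Z/2$, in which $\Inn(\g)$ has index two in $\Aut(\g)$.

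The crucial (and essentially trivial) observation I would then make is that an outer automorphism always commutes with an outer automorphism, namely itself. More precisely, fix an outer automorphism $g_1$ and let $C:=\{a\in\Aut(\g)\mid ag_1=g_1a\}$ be its centraliser. Then $g_1\in C$ while $g_1\notin\Inn(\g)$, so $C\not\subseteq\Inn(\g)$. Since $\Inn(\g)$ is normal of index two, the subgroup $\Inn(\g)\cdot C$ strictly contains $\Inn(\g)$ and hence equals all of $\Aut(\g)$; in particular every element of $\Aut(\g)$ factors as an inner automorphism times an element of $C$.

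To finish, suppose $g_2=a g_1 a^{-1}$ for some $a\in\Aut(\g)$, and write $a=nc$ with $n\in\Inn(\g)$ and $c\in C$. Because $c$ commutes with $g_1$,
\begin{equation*}
g_2=a g_1 a^{-1}=n\,c\, g_1\, c^{-1} n^{-1}=n\, g_1\, n^{-1},
\end{equation*}
so $g_1$ and $g_2$ are conjugate under the inner automorphism $n$. For the parenthetical addendum, when $\Aut(\g)\setminus\Inn(\g)$ is nonempty the element $n g_1$ lies in $\Inn(\g)\cdot(\Aut(\g)\setminus\Inn(\g))=\Aut(\g)\setminus\Inn(\g)$ and satisfies $(n g_1)\,g_1\,(n g_1)^{-1}=n g_1 n^{-1}=g_2$, exhibiting an outer conjugating element as well.

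The one genuine input, and hence the main point where $D_4$ must be excluded, is the structural fact in the first paragraph: discarding $D_4$ forces $\Out(\g)$ to have order at most two, which makes $\Inn(\g)$ an index-two subgroup. Once this is granted, the remainder is the soft group-theoretic manipulation above, and no explicit computation with the twisted affine diagrams is needed.
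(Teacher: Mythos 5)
Your argument is correct and is essentially the paper's own proof in lightly repackaged form: both reduce to the fact that $\Out(\g)$ is trivial or of order two for every simple type other than $D_4$, and both exploit that an outer automorphism commutes with itself --- the paper replaces the conjugating element $x$ by $xb$, which is precisely your factorisation $x=nc$ with $c=b^{-1}$ lying in the centraliser. No gap.
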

\begin{proof}
If $\g$ is not of type $D_4$, then $\Out(\g)=\Aut(\g)/\Inn(\g)$ is either trivial or isomorphic to $\Z_2$. In the trivial case there is nothing to show, so assume the latter. In general, let $G$ be a group and $H\subseteq G$ a subgroup of index $2$. Let $a,b\in G\setminus H$ be conjugate under $x\in G$, i.e.\ $a=xbx^{-1}$. Then $a=(xb)b(xb)^{-1}$ too, and one of $x$ and $xb$ lies in $H$ while the other lies in $G\setminus H$.
\end{proof}

\subsection{Uniqueness}
In the following we use the above lemmas to prove the main result of this section:
\begin{prop}[Part 3]\label{prop:part3}
Let $L=N(Q)$ be a Niemeier lattice with root lattice~$Q$, $V_L$ the associated lattice \voa{} and $g$ an automorphism of $V_L$. Then for each entry of the following table the order~$n$ of $g$, the weight-one fixed-point Lie subalgebra $(V_L^g)_1$ and, if indicated, the value of $\rho(V_L(g))$ determine a unique conjugacy class in $\Aut(V_L)$:
\begin{equation*}
\renewcommand{\arraystretch}{1.2}
\begin{tabular}{rlrlc}
No.\  & $L$               & $n$ & $(V_L^g)_1$           & $\rho(V_L(g))$ \\\hline
 $(1)$& $N(A_9^2D_6)$     & $2$ & $A_5C_5D_5\C$         & $1$ \\
 $(2)$& $N(A_7^2D_5^2)$   & $2$ & $A_1^2A_3A_7B_2^2$    &     \\
 $(3)$& $N(A_8^3)$        & $2$ & $A_8B_4$              &     \\
 $(4)$& $N(E_8^3)$        & $2$ & $D_8E_8$              &     \\
 $(5)$& $N(A_5^4D_4)$     & $2$ & $A_1A_2^2A_3A_5B_2\C$ &     \\
 $(6)$& $N(E_6^4)$        & $2$ & $C_4^2F_4^2$          &     \\
 $(7)$& $N(A_4^6)$        & $2$ & $A_4^2B_2^2$          &     \\
 $(8)$& $N(A_2^{12})$     & $2$ & $A_1^4A_2^4$          &     \\
 $(9)$& $N(A_{11}D_7E_6)$ & $2$ & $A_1B_5D_6F_4$        &     \\
$(10)$& $N(A_{11}D_7E_6)$ & $2$ & $B_2B_4C_4C_6$        & $1$ \\
$(11)$& $N(A_4^6)$        & $5$ & $A_4\C^4$             & $1$ \\
$(12)$& $N(D_{10}E_7^2)$  & $4$ & $A_1A_4A_7B_3\C$      &     \\
$(13)$& $N(D_{10}E_7^2)$  & $4$ & $A_7B_2B_6\C$         &     \\
$(14)$& $N(E_6^4)$        & $6$ & $A_1A_2C_4\C$         &     \\
$(15)$& $N(D_4^6)$        & $8$ & $A_3\C^7$             & $1$
\end{tabular}
\renewcommand{\arraystretch}{1}
\end{equation*}
In particular, the automorphism $\hat\nu\in\Aut(V_L)$ from Proposition~\ref{prop:part1} and the inverse orbifold automorphism $\amgis\in\Aut(V^{\orb(\sigma_h)})$ with respect to the inner automorphism $\sigma_h$ (see table on p.~\pageref{page:table}) are conjugate under the isomorphism $V^{\orb(\sigma_h)}\cong V_L$.
\end{prop}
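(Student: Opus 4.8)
\subsection*{Proof strategy}

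The plan is to show directly that the listed invariants single out one conjugacy class in $\Aut(V_L)$, and then to check that both $\hat\nu$ and $\amgis$ carry these invariants. Let $g\in\Aut(V_L)$ have the prescribed order $n$, fixed-point subalgebra $(V_L^g)_1$, and (where listed) $\rho(V_L(g))=1$. Since all Cartan subalgebras of the semisimple Lie algebra $(V_L)_1$ are conjugate under inner automorphisms, and these lift to $\Aut(V_L)$, I would first conjugate $g$ so that it preserves the fixed Cartan $\hh=\{h(-1)\otimes\ee_0\mid h\in\h\}$. Then $g|_\hh$ is an isometry preserving the root system, so after modifying by an inner automorphism we may take it to be the complexification of a lattice automorphism $\nu\in O(L)$. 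Fixing a standard lift $\hat\nu$, the product $g\hat\nu^{-1}$ acts trivially on $\hh$ and hence, by Lemma~2.5 of \cite{DN99} (used already in the proof of Lemma~\ref{lem:trivialonla}), is of the form $\e^{(2\pi\i)h_0}$. This gives the normal form $g=\e^{(2\pi\i)h_0}\hat\nu$ with $\nu\in O(L)$.

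Next I would fix the outer part $\nu$. The restriction $g|_{(V_L)_1}$ is an automorphism of order~$n$ with the prescribed fixed-point subalgebra, so by Kac's theory (Section~\ref{sec:kac}) its conjugacy class in $\Aut((V_L)_1)$ — in particular the order of its image in $\Out((V_L)_1)$ — is determined by $n$ and $(V_L^g)_1$. Here Lemma~\ref{lem:innerouter} removes the $\Inn$-versus-$\Aut$ ambiguity on the simple factors other than $D_4$; any $D_4$ factors, together with the realisation of the required diagram automorphisms by genuine lattice isometries, I would treat case by case, which is exactly where the \texttt{Magma} computation of conjugacy classes in $\Aut(L)$ enters. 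Lifting the conjugating element (inner parts to inner automorphisms of $V_L$, diagram parts to lattice-automorphism lifts) lets me assume that all candidates share the same restriction $g|_{(V_L)_1}$; note this does not force a single cycle shape, consistent with cases (12a) and (12b).

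It then remains to pin down the inner part. By Lemma~\ref{lem:conjproj} I may replace $h$ by its orthogonal projection onto $L^\nu\otimes_\Z\C$, so assume $h$ lies there. Given two normal forms $g=\e^{(2\pi\i)h_0}\hat\nu$ and $g'=\e^{(2\pi\i)h'_0}\hat\nu$ with the same restriction to $(V_L)_1$, the quotient $g'g^{-1}=\e^{(2\pi\i)(h'-h)_0}$ is trivial on $(V_L)_1$ and hence, by Lemma~\ref{lem:trivialonla}, equals $\e^{(2\pi\i)k_0}$ for some $k$ in the dual of the root lattice; thus $g'=fg$ with $f$ inner and $(fg)^n=g^n=\id$. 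When the outer part cyclically permutes a block of isomorphic simple ideals, Lemma~\ref{lem:prop3.6_2} applies to that block and gives $fg\sim g$ under an inner automorphism; the ideals fixed by $\nu$ are controlled by the order and fixed-point constraints on $h$ together with the projection lemma. In the four cases marked $\rho(V_L(g))=1$, where order and fixed-point subalgebra alone leave more than one residual class, the conformal weight — computed from $\nu$ and $h$ via \eqref{eq:weights} and the known value of $\rho(V_L(\hat\nu))$ — selects the correct class. I expect this last step, disentangling the inner part in the presence of $\nu$-fixed ideals and confirming that the stated invariants exhaust the residual discrete choices, to be the main obstacle: it is genuinely case-dependent (centraliser of $\hat\nu$, the action of the glue group $L'/L$, explicit conformal-weight computations) rather than a uniform argument.

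Finally, to obtain the concluding assertion, I would transport $\amgis$ across the isomorphism $V^{\orb(\sigma_h)}\cong V_L$ of Proposition~\ref{prop:part2}. The resulting automorphism has order~$n$ and, since $(V^{\orb(\sigma_h)})_1^{\amgis}=V_1^{\sigma_h}\cong(V_L^{\hat\nu})_1$, the prescribed fixed-point subalgebra. Using $V_L(\amgis)\cong V^{\orb(\sigma_h)}(\amgis)=\bigoplus_{i\in\Z_n}W^{(i,1)}$ together with Lemma~\ref{lem:confgeqone} and the presence of a weight-one vector in the $\xi_n$-eigenspace $W^{(0,1)}\subseteq V$, one checks $\rho(V_L(\amgis))=1$ in the marked cases. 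Hence $\amgis$ and $\hat\nu$ carry identical data and are conjugate by the uniqueness established above, which is precisely the claim.
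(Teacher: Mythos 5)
Your proposal is correct and follows essentially the same route as the paper: reduction to the normal form $g=\e^{(2\pi\i)h_0}\hat\nu$ via Kac's theory, Lemmas~\ref{lem:prop3.6_2}, \ref{lem:innerouter}, \ref{lem:trivialonla} and \ref{lem:conjproj}, then a case-by-case analysis of the residual inner part $h$ modulo $\pi_\nu(L)$ and the order constraint, with the conformal weight $\rho(V_L(g))=1$ eliminating the leftover classes in the four marked cases, and finally the verification that $\amgis$ carries the same invariants via the decomposition $\bigoplus_{i\in\Z_n}W^{(i,1)}$. The paper merely organises the first stage into two separate lemmas (uniqueness of the class of $p(g)$ in $H$, then uniqueness up to $\pi^{-1}(H)$-conjugacy in $\Aut((V_L)_1)$), which you have merged without loss.
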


The proof of the proposition will be carried out in several steps. The first step is to use Kac's theory on fixed-point Lie subalgebras of simple Lie algebras (see Section~\ref{sec:kac}) to establish an analogous result at the level of conjugacy in $\Aut((V_L)_1)$.
\begin{lem}\label{lem:uniqueautlie}
For each entry of the table in Proposition~\ref{prop:part3} there is a unique conjugacy class in $\Aut((V_L)_1)$ of order~$n$ with fixed-point Lie subalgebra $(V_L^g)_1$.
\end{lem}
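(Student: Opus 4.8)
The plan is to work entirely inside the semisimple Lie algebra $\g=(V_L)_1$ and to classify its order-$n$ automorphisms with prescribed fixed-point subalgebra by combining the description of $\Aut(\g)$ for semisimple $\g$ with the extension of Kac's theory (Section~\ref{sec:kac}) to semisimple Lie algebras. Writing $\g=\g_1\oplus\dots\oplus\g_r$ as a sum of simple ideals, every $g\in\Aut(\g)$ induces a permutation $\pi$ of the index set preserving the isomorphism types of the ideals, and $\Aut(\g)$ is the semidirect product of $\prod_i\Aut(\g_i)$ with the group permuting isomorphic factors. I would first record this, noting in particular that the permutation part is available to absorb the ambiguity of \emph{which} isomorphic factor receives which local automorphism: for instance in case~(1) the two copies of $A_9$ must receive the two distinct outer involutions with fixed points $C_5$ and $D_5$, and the two assignments are identified by the transposition of the factors.

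The second step is a reduction along the orbits of $\pi$. On a block of $m$ mutually isomorphic ideals $\mathfrak{s}^{\oplus m}$ that $g$ cyclically permutes, $g$ is conjugate under $\Aut(\mathfrak{s}^{\oplus m})$ to the map $(x_1,\dots,x_m)\mapsto(\phi(x_m),x_1,\dots,x_{m-1})$ for a single $\phi\in\Aut(\mathfrak{s})$; its conjugacy class depends only on that of $\phi$, one has $\ord(\phi)=n/m$, and the fixed-point subalgebra of the block is the diagonally embedded $\mathfrak{s}^\phi$. (This is the Lie-algebra shadow of the bookkeeping behind Lemma~\ref{lem:prop3.6_2}.) Thus each $\pi$-orbit contributes one simple factor $\mathfrak{s}$, a local order $n/m$, and the summand $\mathfrak{s}^\phi$ of the target $(V_L^g)_1$; conversely, matching $(V_L^g)_1$ against these contributions fixes the cycle type of $\pi$. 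For entries with trivial $\pi$ this reduction is vacuous (as for the honest componentwise involution in case~(1)), whereas for entries such as (3), (4) and (8) it forces $\pi$ to interchange isomorphic $A_8$-, $E_8$- and $A_2$-summands, and for case~(11) it produces a block of five copies of $A_4$ permuted with trivial twist (contributing the diagonal $A_4$) alongside one copy carrying a regular order-$5$ automorphism (contributing $\C^4$).

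With the block structure and the local orders in hand, the third step applies Kac's classification to each factor $\mathfrak{s}$: the conjugacy class of $\phi$ in $\Aut(\mathfrak{s})$ is encoded by an affine Dynkin diagram $X^{(k)}$ and a labelling $(s_i)$ up to diagram automorphisms, the fixed-point subalgebra being read off from the zero-labelled nodes and the toral rank from the number of nonzero labels minus one. For each of the fifteen entries I would exhibit the Kac diagram realising the required $\mathfrak{s}^\phi$ at the required order and verify that it is the only one, so that $\phi$, and hence $g$, is unique up to conjugacy; the total semisimple type together with the dimension of the toral part of $(V_L^g)_1$ gives a consistency check against the table.

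The hard part will be the genuine case-by-case verification in this last step, since neither the order alone nor the fixed-point subalgebra alone determines the Kac data: one must rule out competing labellings with the same fixed points and, simultaneously, competing cycle types of $\pi$ that could accidentally reproduce the same $(V_L^g)_1$. The most delicate instances are the higher-order cases (12)--(15), where permutation and twist interact, and above all case~(15) on $D_4^6$: the triality of $D_4$ enlarges its outer automorphism group to $S_3$, which is exactly the situation excluded in Lemma~\ref{lem:innerouter}, so the usual ``conjugate outer automorphisms are inner-conjugate'' shortcut fails and the order-$8$ structure must be analysed directly. I expect this $D_4$-triality bookkeeping, together with confirming that no alternative cycle type of $\pi$ yields $A_3\C^7$, to be the main obstacle.
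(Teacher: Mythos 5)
Your proposal is essentially the paper's own argument: the paper proves this lemma with the single sentence that one applies the straightforward extension of Kac's theory to semisimple Lie algebras case by case, and your decomposition into simple ideals, reduction of each cyclically permuted block to a single twist $\phi$, and Kac-diagram matching is exactly that extension spelled out. (Two minor points: on a block of $m$ permuted factors one only gets $m\cdot\ord(\phi)\mid n$, with the least common multiple over all blocks equal to $n$, rather than $\ord(\phi)=n/m$ on the nose; and since this lemma concerns conjugacy in the full group $\Aut((V_L)_1)$, the $D_4$-triality caveat of Lemma~\ref{lem:innerouter} does not yet bite --- it only matters for the later refinement to conjugacy under $\pi^{-1}(H)$.)
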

\begin{proof}
We apply the straightforward extension of Kac's theory to semisimple Lie algebras (see Section~\ref{sec:kac}) case by case.
\end{proof}

In the following we investigate whether two automorphisms of $(V_L)_1$ of order~$n$ with the same fixed-point Lie subalgebra are not just conjugate in $\Aut((V_L)_1)$ but more specifically conjugate under an element whose projection to $\Out((V_L)_1)=\Aut((V_L)_1)/\Inn((V_L)_1)$ lies in a certain subgroup $H\subseteq\Out((V_L)_1)$ that we shall describe below (see Lemma~\ref{lem:uniqueautlie2}). This is in particular fulfilled if the two automorphisms are conjugate under an inner automorphism, which corresponds to the trivial element in $\Out((V_L)_1)$.

The following summary is based on Sections 4.3, 16.1 and 18.4 of \cite{CS99} and partially draws from \cite{ISS15}, Section~2 and \cite{LS16b}, Section~3.3. Let $L=N(Q)$ be a Niemeier lattice other than the Leech lattice with root lattice $Q$ and simple roots $\Delta$. We denote by $H$ the subgroup of $\Aut(L)$ consisting of all elements that preserve $\Delta$ as a set. The elements of $H$ can be viewed as automorphisms of the Dynkin diagram of $Q$. This correspondence is injective but in general there are Dynkin diagram automorphisms that do not arise in this way. Since the elements of $\Aut(L)$ preserve $Q$, there is a homomorphism from $H$ into the automorphism group of the finite quadratic space $L/Q$, the glue code of $L$. In general this map is not injective. 

Moreover, one can show that $H\cong\Aut(L)/W$ where $W=G_0$ is the Weyl group of $L$ (or that of $Q$), i.e.\ the subgroup of $\Aut(L)$ generated by the reflections in the roots of $L$. $\Aut(L)$ is a split group extension $W{:}H$.

Let $G_1$ be the normal subgroup of $H$ consisting of those automorphisms that fix each irreducible component of the Dynkin diagram of $Q$. Then $H$ is a group extension $G_1.G_2$ where $G_2=H/G_1$ is isomorphic to a subgroup of the permutations of the isomorphic components of the Dynkin diagram of $Q$.

The Weyl group $G_0=W$ and the groups $G_1$ and $G_2$ are tabulated in \cite{CS99} for all Niemeier lattices.

Now let $V_L$ be the lattice \voa{} associated with $L=N(Q)$. Recall that $K=\langle\{\e^{v_0}\,|\,v\in(V_L)_1\}\rangle$ is the group of inner automorphisms of $V_L$. There is a surjective homomorphism
\begin{equation*}
p\colon\Aut(V_L)\to H
\end{equation*}
with kernel $K$, i.e.\ $\Aut(V_L)/K\cong H$ (see \cite{LS16b}, Lemma~3.10), and $H$ can be identified with a subgroup of the diagram automorphisms of the Dynkin diagram of $Q$, i.e.\ the outer automorphism group of $(V_L)_1$. Let
\begin{equation*}
r\colon\Aut(V_L)\to\Aut((V_L)_1)
\end{equation*}
denote the restriction from $V_L$ to $(V_L)_1$. Moreover, consider for any Lie algebra $\g$ the projection
\begin{equation*}
\pi\colon\Aut(\g)\to\Out(\g)=\Aut(\g)/\Inn(\g).
\end{equation*}
Finally, we denote by $q$ the composition
\begin{equation*}
q\colon\Aut(V_L)\to H\to G_2.
\end{equation*}

For an automorphism $g$ of $V_L$, $p(g)\in H$ can be identified with the element $\pi(r(g))\in\Out((V_L)_1)$, i.e.\ there is the following commutative diagram:
\begin{equation}\label{eq:cd}
\begin{tikzcd}
\Aut(V_L)\arrow{rr}{p}\arrow{dd}{r}&&H\arrow[hookrightarrow]{dd}\\
\\
\Aut((V_L)_1)\arrow{rr}{\pi}&&\Out((V_L)_1)
\end{tikzcd}
\end{equation}
The restriction $r\colon\Aut(V_L)\to\Aut((V_L)_1)$ is in general not surjective. Since $p$ is surjective, $r(\Aut(V_L))=\pi^{-1}(H)$, i.e.\ the group $\pi^{-1}(H)$ can be identified with the subgroup of $\Aut((V_L)_1)$ of automorphisms that lift to automorphisms of $V_L$.

\begin{lem}\label{lem:uniqueautlatvoa}
Under the assumptions of Proposition~\ref{prop:part3} the projection $p(g)$ is in a unique conjugacy class in $H$.
\end{lem}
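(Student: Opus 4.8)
The plan is to read off $p(g)$ entirely from its action on the weight-one Lie algebra and then transport the uniqueness already available in $\Aut((V_L)_1)$ down to the finite group $H$. By the commutative diagram~\eqref{eq:cd} the element $p(g)\in H$ coincides with $\pi(r(g))\in\Out((V_L)_1)$, the outer part of the restriction $r(g)=g|_{(V_L)_1}$. In every entry of the table the fixed-point algebra $(V_L^g)_1=((V_L)_1)^{r(g)}$ has rank strictly less than $24=\rk((V_L)_1)$, so $r(g)$ is not inner; one checks in each case that its order is exactly $n$. Lemma~\ref{lem:uniqueautlie} then applies to $r(g)$ and shows that $r(g)$ lies in a single conjugacy class of $\Aut((V_L)_1)$, whence $p(g)=\pi(r(g))$ lies in a single conjugacy class of $\Out((V_L)_1)$. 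This reduces the claim to a statement about how one fixed $\Out((V_L)_1)$-class meets the subgroup $H$.

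First I would make the bookkeeping explicit. For each entry I record the orbit structure that $r(g)$ must induce on the simple ideals $\g_1,\dots,\g_r$ of $(V_L)_1$ in order to produce the prescribed fixed-point algebra: an orbit of length $k$ contributes to $(V_L^g)_1$ a single copy of the fixed-point subalgebra of the net (return) automorphism $\psi$ of one of its ideals, embedded diagonally. Hence the isomorphism type of each summand of $(V_L^g)_1$ simultaneously reads off the cycle type of the permutation underlying $p(g)$ and, for each orbit, whether $\psi$ is inner or outer (and, when $\Out(\g_i)$ is larger, which outer class). For example, a length-one orbit $A_9$ with fixed points $C_5$ forces an outer return automorphism, whereas fixed points of the form $A_a\oplus A_b\oplus\C$ force an inner one.

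The core of the argument is then the descent from $\Out((V_L)_1)$-conjugacy to $H$-conjugacy, using the explicit structure of $H$ as an extension $G_1.G_2$ tabulated in \cite{CS99}, where $G_2$ records the permutation of the isomorphic Dynkin components and $G_1$ the component-fixing diagram automorphisms. I would argue that the permutation $q(g)\in G_2$ is determined up to $G_2$-conjugacy by the cycle type forced above, and that the $G_1$-part is then pinned down by the component-wise outer data. For every simple type occurring other than $D_4$, Lemma~\ref{lem:innerouter} guarantees that ``conjugate outer'' already means ``conjugate under an inner automorphism'', so no ambiguity survives and a single $H$-conjugacy class results.

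The main obstacle will be the entries in which a $D_4$ component appears, namely (5) with $(V_L)_1=A_5^4D_4$ and, above all, (15) with $(V_L)_1=D_4^6$ and $n=8$: here $\Out(D_4)=S_3$ (triality), Lemma~\ref{lem:innerouter} no longer applies, and one must verify directly that the combination of triality on the $D_4$-orbits with the permutation of the six $D_4$ components producing the fixed-point algebra $A_3\C^7$ lies in a single $H$-conjugacy class. I expect this to require an explicit inspection of the $G_1.G_2$ data for $N(D_4^6)$ from \cite{CS99} and of how $p(g)$ interleaves the six components with triality; the remaining entries, whose components are of type $A$, $D_{\neq4}$, $E$ or $C$ with abelian outer automorphism group, should follow uniformly from the scheme above.
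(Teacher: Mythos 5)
Your reduction is sound up to the point you yourself identify as the core: $p(g)=\pi(r(g))$, the rank drop forces $r(g)$ to be outer, and Kac's theory pins down the ``signed cycle type'' of $p(g)$ on the simple components (which orbits occur, and whether the return automorphism of each orbit is inner or outer). But the justification you offer for the descent from a single $\Out((V_L)_1)$-class to a single $H$-class does not work, and that descent is the entire content of the lemma. Lemma~\ref{lem:innerouter} says that two conjugate outer automorphisms of a simple Lie algebra are conjugate under an \emph{inner} automorphism; inner automorphisms project to the identity in $\Out((V_L)_1)$ and are therefore invisible in $H$, so that lemma cannot decide whether two elements of $H$ are conjugate \emph{by an element of $H$}. (It is the right tool for Lemma~\ref{lem:uniqueautlie2}, which concerns conjugacy in $\Aut((V_L)_1)$, not here.) The question at hand is purely one about the finite group $H$, which is in general a proper subgroup of the full diagram-automorphism group: a conjugacy class of the ambient group can meet $H$ in several $H$-classes, and several classes of $H$ can a priori share the same signed cycle type.

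What the paper actually does, and what your proposal defers with ``I would argue that\ldots'', is a case-by-case enumeration of the conjugacy classes of the relevant order $m=\ord(p(g))$ in each $H$ (for instance $H\cong D_6$ for $N(A_8^3)$ with three classes of involutions, an extension $\Z_2.M_{12}$ for $N(A_2^{12})$ with three classes, $\Z_3.S_6$ for $N(D_4^6)$ with two classes of order $4$), followed by matching the Kac-theoretic data against explicit class representatives; this step is partly computer-assisted. Two further points you would need to address: the order of $p(g)$ need not equal $n$ (it is $2$ in the $n=4$ cases and $4$ in the $n=8$ case), and the $D_4$/triality worry you raise for cases (5) and (15) is not where the difficulty of \emph{this} lemma lies — in both cases the candidate classes of $H$ are already separated by their projections to $G_2$, and the triality subtlety only bites in Lemma~\ref{lem:uniqueautlie2}.
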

\begin{proof}
We make a case-by-case analysis. In all of the cases, since the rank of $(V_L^g)_1$ is less than the rank of $(V_L)_1$, the automorphism $r(g)$ of the Lie algebra $(V_L)_1$ is outer, so $p(g)$ is non-trivial. If $n$ is prime, i.e.\ $n=2,5$, we deduce that $p(g)$ has order $m=n$. For the two cases in which $g$ has order $n=4$ we use Kac's theory to determine that $p(g)$ has order $m=2$, in the order~6 case $m=n=6$, and in the order $n=8$ case $p(g)$ has order $m=4$.

In each case we determine the group structure of $H$. If $H$ possesses a unique conjugacy class of elements of order~$m$, then we are done. If not, we use Kac's theory to determine all conjugacy classes of Lie algebra automorphisms of $(V_L)_1$ with fixed-point Lie subalgebra $(V_L^g)_1$. There can be more than one such conjugacy class in $\Aut((V_L)_1)$, but in all of the following cases they reduce to a single conjugacy class in $H$.

Let $\tau$ denote the diagram automorphism of order~2 of $A_l$ for $l\geq 2$, $D_l$ for $l\geq 5$ and $E_6$. For a generic simple type $X_l$ and $\{p_1,\ldots,p_s\}=\{1,2,\ldots,s\}$ we denote by $\sigma_{(p_1\,p_2\,\ldots\,p_s)}$ the permutation of the simple components of $X_l^s$ of cycle structure $(p_1\,p_2\,\ldots\,p_s)$. In the following discussion we list representatives of conjugacy classes of $H$ in terms of $\tau$ and the permutations of the simple components. In general, this will depend on the choice of the glue code for the Niemeier lattice (e.g. in case~(8) where the glue code is the ternary Golay code). However, this is not essential for identifying the conjugacy class.
\begin{enumerate}
\item For $L=N(A_9^2D_6)$ the group $H\cong\Z_4$ has exactly one element of order~2.

\item For $L=N(A_7^2D_5^2)$ the group $H\cong D_4$ (the dihedral group of order~8) has exactly three conjugacy classes of order~2. These can be distinguished by their projection to $G_2$:
\begin{equation*}
\renewcommand{\arraystretch}{1.2}
\begin{tabular}{cc|cc}
\multicolumn{2}{c|}{conj.\ class in $H$}&\multicolumn{2}{c}{proj.\ to $G_2$}\\
order & length & on $A_7^2$ & on $D_5^2$\\\hline
$2$ & $1$ & $\id$ & $\id$\\
$2$ & $2$ & $\id$ & $\sigma_{(1\,2)}$\\
$2$ & $2$ & $\sigma_{(1\,2)}$ & $\id$
\end{tabular}
\renewcommand{\arraystretch}{1}
\end{equation*}

By Kac's theory, the automorphism $r(g)$ of $(V_L)_1\cong A_7^2D_5^2$ of order~2 with fixed-point Lie subalgebra $A_1^2A_3A_7B_2^2$ can only occur as
\begin{equation*}
A_7^2\leadsto A_7,\quad D_5\leadsto A_1^2A_3,\quad D_5\leadsto B_2^2
\end{equation*}
up to reordering of the two copies of $D_5$ with
\begin{equation*}
q(g)=\begin{cases}\sigma_{(1\,2)}&\text{on }A_7^2,\\\id&\text{on }D_5^2\end{cases}
\end{equation*}
so that $p(g)$ has to be in the third conjugacy class.

\item For $L=N(A_8^3)$ the group $H\cong D_6$ (the dihedral group of order~12) has exactly three conjugacy classes of order~2. In this case it is not enough to study the projection to $G_2$. The conjugacy classes can be described as:
\begin{equation*}
\renewcommand{\arraystretch}{1.2}
\begin{tabular}{ccc}
\multicolumn{3}{c}{conj.\ class in $H$}\\
order & length & on $A_8^3$ \\\hline
$2$ & $1$ & $\tau_{(1)}\tau_{(2)}\tau_{(3)}$\\
$2$ & $3$ & $\sigma_{(1\,2)},\sigma_{(2\,3)},\sigma_{(1\,3)}$\\
$2$ & $3$ & $\sigma_{(1\,2)}\tau_{(3)},\sigma_{(2\,3)}\tau_{(1)},\sigma_{(1\,3)}\tau_{(2)}$
\end{tabular}
\renewcommand{\arraystretch}{1}
\end{equation*}
where $\tau_{(i)}$ denotes the diagram automorphism $\tau$ (see above) on the $i$-th copy $A_8^{(i)}$ of $A_8$, $i=1,2,3$.

By Kac's theory, the automorphism $r(g)$ of $(V_L)_1\cong A_8^3$ of order~2 with fixed-point Lie subalgebra $A_8B_4$ can only occur as
\begin{equation*}
A_8^2\leadsto A_8,\quad A_8\leadsto B_4
\end{equation*}
up to reordering of the three copies of $A_8$ with
\begin{equation*}
p(g)=\begin{cases}\sigma_{(1\,2)}&\text{on }A_8^{(1)}A_8^{(2)},\\\tau_{(3)}&\text{on }A_8^{(3)},\end{cases}
\end{equation*}
for example, so that $p(g)$ has to be in the third conjugacy class.

\item For $L=N(E_8^3)$ the group $H\cong S_3$ has exactly one conjugacy class of order~2, which has length 3.

\item For $L=N(A_5^4D_4)$ the group $H\cong\GL(2,3)$ has exactly two conjugacy classes of order~2. These can be distinguished by their projection to $G_2$:
\begin{equation*}
\renewcommand{\arraystretch}{1.2}
\begin{tabular}{cc|cc}
\multicolumn{2}{c|}{conj.\ class in $H$}&\multicolumn{2}{c}{proj.\ to $G_2$}\\
order & length & on $A_5^4$ & on $D_4$\\\hline
$2$ &$\s1$ & $\id$ & $\id$\\
$2$ & $12$ & $\sigma_{(1\,2)},\ldots$ & $\id$ \\
\end{tabular}
\renewcommand{\arraystretch}{1}
\end{equation*}

By Kac's theory, the automorphism $r(g)$ of $(V_L)_1\cong A_5^4D_4$ of order~2 with fixed-point Lie subalgebra $A_1A_2^2A_3A_5B_2\C$ can only occur as
\begin{equation*}
A_5^2\leadsto A_5,\quad A_5\leadsto A_2^2\C,\quad A_5\leadsto A_3,\quad D_4\leadsto A_1B_2
\end{equation*}
up to reordering of the four copies of $A_5$ with
\begin{equation*}
q(g)=\begin{cases}\sigma_{(1\,2)}&\text{on }A_5^4,\\\id&\text{on }D_4,\end{cases}
\end{equation*}
for example, so that $p(g)$ has to be in the second conjugacy class.

\item For $L=N(E_6^4)$ the group $H\cong\GL(2,3)$ has exactly two conjugacy classes of order~2. These can be distinguished by their projection to $G_2$:
\begin{equation*}
\renewcommand{\arraystretch}{1.2}
\begin{tabular}{cc|c}
\multicolumn{2}{c|}{conj.\ class in $H$}&proj.\ to $G_2$\\
order & length & on $E_6^4$ \\\hline
$2$ &$\s1$ & $\id$ \\
$2$ & $12$ & $\sigma_{(1\,2)},\ldots$ \\
\end{tabular}
\renewcommand{\arraystretch}{1}
\end{equation*}

By Kac's theory, the automorphism $r(g)$ of $(V_L)_1\cong E_6^4$ of order~2 with fixed-point Lie subalgebra $C_4^2F_4^2$ can only occur as
\begin{equation*}
E_6\leadsto C_4,\quad E_6\leadsto C_4,\quad E_6\leadsto F_4,\quad E_6\leadsto F_4
\end{equation*}
up to reordering of the four copies of $E_6$ with
\begin{equation*}
q(g)=\id
\end{equation*}
so that $p(g)$ has to be in the first conjugacy class.

\item For $L=N(A_4^6)$ the group $H$ (a group extension $\Z_2.S_5$) has exactly three conjugacy classes of order~2. In this case it is not enough to study the projection to $G_2$. The conjugacy classes can be described as:
\begin{equation*}
\renewcommand{\arraystretch}{1.2}
\begin{tabular}{ccc}
\multicolumn{3}{c}{conj.\ class in $H$}\\
order & length & on $A_4^6$ \\\hline
$2$ &$\s1$ & $\tau_{(1)}\tau_{(2)}\tau_{(3)}\tau_{(4)}\tau_{(5)}\tau_{(6)}$\\
$2$ & $15$ & $\sigma_{(1\,2)}\sigma_{(3\,4)},\ldots$ \\
$2$ & $15$ & $\sigma_{(1\,2)}\sigma_{(3\,4)}\tau_{(5)}\tau_{(6)},\ldots$
\end{tabular}
\renewcommand{\arraystretch}{1}
\end{equation*}
where $\tau_{(i)}$ denotes the diagram automorphism $\tau$ on the $i$-th copy $A_4^{(i)}$ of $A_4$, $i=1,\ldots,6$.

By Kac's theory, the automorphism $r(g)$ of $(V_L)_1\cong A_4^6$ of order~2 with fixed-point Lie subalgebra $A_4^2B_2^2$ can only occur as
\begin{equation*}
A_4^2\leadsto A_4,\quad A_4^2\leadsto A_4,\quad A_4\leadsto B_2,\quad A_4\leadsto B_2
\end{equation*}
up to reordering of the six copies of $A_4$ with
\begin{equation*}
p(g)=\begin{cases}\sigma_{(1\,2)}&\text{on }A_4^{(1)}A_4^{(2)},\\\sigma_{(3\,4)}&\text{on }A_4^{(3)}A_4^{(4)},\\\tau_{(5)}&\text{on }A_4^{(5)},\\\tau_{(6)}&\text{on }A_4^{(6)},\end{cases}
\end{equation*}
for example, so that $p(g)$ has to be in the second conjugacy class.

\item For $L=N(A_2^{12})$ the group $H$ (a group extension $\Z_2.M_{12}$) has exactly three conjugacy classes of order~2. Also in this case it is not enough to study the projection to $G_2$. The conjugacy classes can be described as:
\begin{equation*}
\renewcommand{\arraystretch}{1.2}
\begin{tabular}{ccc}
\multicolumn{3}{c}{conj.\ class in $H$}\\
order & length & on $A_2^{12}$ \\\hline
$2$ &$\s\s1$ & $\tau_{(1)}\tau_{(2)}\ldots\tau_{(12)}$\\
$2$ &  $495$ & $\sigma_{(1\,2)}\sigma_{(3\,4)}\sigma_{(5\,6)}\sigma_{(7\,8)},\ldots$\\
$2$ &  $495$ & $\sigma_{(1\,2)}\sigma_{(3\,4)}\sigma_{(5\,6)}\sigma_{(7\,8)}\tau_{(9)}\tau_{(10)}\tau_{(11)}\tau_{(12)},\ldots$
\end{tabular}
\renewcommand{\arraystretch}{1}
\end{equation*}
where $\tau_{(i)}$ denotes the diagram automorphism $\tau$ on the $i$-th copy $A_2^{(i)}$ of $A_2$, $i=1,\ldots,12$.

By Kac's theory, the automorphism $r(g)$ of $(V_L)_1\cong A_2^{12}$ of order~2 with fixed-point Lie subalgebra $A_1^4A_2^4$ can only occur as
\begin{equation*}
A_2^2\leadsto A_2\text{ (4 times)},\quad A_2\leadsto A_1\text{ (4 times)}
\end{equation*}
up to reordering of the twelve copies of $A_2$ with
\begin{equation*}
p(g)=\begin{cases}\sigma_{(1\,2)}&\text{on }A_2^{(1)}A_2^{(2)},\\\sigma_{(3\,4)}&\text{on }A_2^{(3)}A_2^{(4)},\\\sigma_{(5\,6)}&\text{on }A_2^{(5)}A_2^{(6)},\\\sigma_{(7\,8)}&\text{on }A_2^{(7)}A_2^{(8)},\\\tau_{(9)}&\text{on }A_2^{(9)},\\\tau_{(10)}&\text{on }A_2^{(10)},\\\tau_{(11)}&\text{on }A_2^{(11)},\\\tau_{(12)}&\text{on }A_2^{(12)},\end{cases}
\end{equation*}
for example, so that $p(g)$ has to be in the third conjugacy class.

\item For $L=N(A_{11}D_7E_6)$ the group $H\cong\Z_2$ has exactly one element of order~2.

\item As in the previous case.

\item For $L=N(A_4^6)$ the group $H$ (a group extension $\Z_2.S_5$) has exactly one conjugacy class of order~5, which has length 24.

\item For $L=N(D_{10}E_7^2)$ the group $H\cong\Z_2$ has exactly one element of order~2.

\item As in the previous case.

\item For $L=N(E_6^4)$ the group $H\cong\GL(2,3)$ has exactly one conjugacy class of order~6, which has length 8.

\item For $L=N(D_4^6)$ the group $H$ (a group extension $\Z_3.S_6$) has exactly two conjugacy classes of order~4. These can be distinguished by their projection to $G_2$:
\begin{equation*}
\renewcommand{\arraystretch}{1.2}
\begin{tabular}{cc|c}
\multicolumn{2}{c|}{conj.\ class in $H$}&proj.\ to $G_2$\\
order & length & on $D_4^6$ \\\hline
$4$ &$\s90$ & $\sigma_{(1\,2\,3\,4)}\sigma_{(5\,6)},\ldots$ \\
$4$ & $270$ & $\sigma_{(1\,2\,3\,4)},\ldots$ \\
\end{tabular}
\renewcommand{\arraystretch}{1}
\end{equation*}

By Kac's theory, the automorphism $r(g)$ of $(V_L)_1\cong D_4^6$ of order~8 with fixed-point Lie subalgebra $A_3\C^7$ can only occur as
\begin{equation*}
D_4^4\leadsto A_3\C,\quad D_4\leadsto\C^3,\quad D_4\leadsto\C^3
\end{equation*}
up to reordering of the six copies of $D_4$ with
\begin{equation*}
q(g)=\begin{cases}\sigma_{(1\,2\,3\,4)}&\text{on }D_4^{(1)}\ldots D_4^{(4)},\\\id&\text{on }D_4^{(5)},\\\id&\text{on }D_4^{(6)},\end{cases}
\end{equation*}
for example, so that $p(g)$ has to be in the second conjugacy class.\qedhere
\end{enumerate}
\end{proof}

Instead of just studying conjugacy classes in $\Aut((V_L)_1)$ as in Lemma~\ref{lem:uniqueautlie} we now study conjugacy in $\Aut((V_L)_1)$ under automorphisms in $\pi^{-1}(H)$, i.e.\ under those automorphisms of $(V_L)_1$ which lift to automorphisms of $V_L$ (see the diagram~\eqref{eq:cd}), e.g.\ under inner automorphisms.
\begin{lem}\label{lem:uniqueautlie2}
Under the assumptions of Proposition~\ref{prop:part3} the restriction $r(g)\in\Aut((V_L)_1)$ lies in a unique class in $\Aut((V_L)_1)$ up to conjugacy under automorphisms in $\pi^{-1}(H)$.
\end{lem}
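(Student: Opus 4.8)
The plan is to upgrade the conjugacy in the full automorphism group $\Aut((V_L)_1)$ established in Lemma~\ref{lem:uniqueautlie} to conjugacy under the subgroup $\pi^{-1}(H)$, using the control over outer parts provided by Lemma~\ref{lem:uniqueautlatvoa}. Let $a,b\in\pi^{-1}(H)$ be the restrictions $r(g)$ of two automorphisms of $V_L$ satisfying the hypotheses, so that both have order~$n$ and fixed-point Lie subalgebra of the prescribed type. By Lemma~\ref{lem:uniqueautlie} they are conjugate in $\Aut((V_L)_1)$, and by Lemma~\ref{lem:uniqueautlatvoa} their images $\pi(a),\pi(b)$ lie in a single conjugacy class of $H$. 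Lifting an element of $H$ that conjugates $\pi(b)$ into $\pi(a)$ to an element of $\pi^{-1}(H)$ and conjugating $b$ by it — which does not change the $\pi^{-1}(H)$-class we are trying to pin down — I may assume from the outset that $\pi(a)=\pi(b)=:\theta\in H$. It then remains to prove that two automorphisms with the same outer part $\theta$ and the prescribed fixed-point subalgebra are conjugate under $\pi^{-1}(H)$ (in fact under an inner automorphism in the generic situation).

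Next I would analyse $\theta$ through its action on the simple ideals $\g_1,\dots,\g_r$ of $(V_L)_1$, grouping them into $\theta$-orbits and writing $a=fb$ with $f=\prod_i f_i\in\Inn((V_L)_1)$. On an orbit of ideals that $\theta$ permutes cyclically, Lemma~\ref{lem:prop3.6_2} (together with a straightforward refinement recording the surviving inner automorphism $a^{k}$ on the diagonal when the order on the orbit exceeds the orbit length $k$, as happens in case~(15)) shows that the restriction of $a$ is inner-conjugate to a standard representative determined by the diagonal data. On an ideal fixed by $\theta$, Lemma~\ref{lem:conjproj} lets me project the inner part onto the $\theta$-fixed part of the Cartan subalgebra, reducing the comparison to Kac coordinates; the refined part of Kac's theory (Section~\ref{sec:kac}) then fixes the inner-conjugacy class once the fixed-point subalgebra is prescribed, and Lemma~\ref{lem:innerouter} lets me pass freely between inner and outer conjugacy whenever no $D_4$ factor is present. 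Assembling the orbits, the prescribed isomorphism type of the fixed-point subalgebra pins the Kac labels down to exactly the diagram symmetries realised inside $H$, giving a unique $\pi^{-1}(H)$-conjugacy class; as in Lemma~\ref{lem:uniqueautlatvoa}, I expect to verify this bookkeeping case by case.

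The main obstacle is case~(15), where $(V_L)_1\cong D_4^6$ and Lemma~\ref{lem:innerouter} fails on account of $D_4$ triality: conjugate outer automorphisms of a single $D_4$ factor need not be inner-conjugate, so the triality ambiguity must instead be absorbed using the order-three diagram automorphisms already present in $H\cong\Z_3.S_6$ rather than merely in $\Inn((V_L)_1)$. A secondary subtlety, present throughout, is matching the diagram-automorphism freedom permitted by Kac's refined classification with the elements genuinely available in $\pi^{-1}(H)$ rather than in all of $\Aut((V_L)_1)$; this is precisely the gap between Lemma~\ref{lem:uniqueautlie} and the present statement, and it is what the reduction to equal outer part via Lemma~\ref{lem:uniqueautlatvoa} is designed to close.
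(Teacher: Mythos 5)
Your proposal follows essentially the same route as the paper: reduce via Lemma~\ref{lem:uniqueautlatvoa} to two automorphisms differing by an inner automorphism of $(V_L)_1$, then argue orbit-by-orbit on the simple ideals using Lemma~\ref{lem:prop3.6_2} (suitably refined when the order exceeds the orbit length, as the paper does explicitly in case~(14)), Kac's refined theory on the fixed ideals, and Lemma~\ref{lem:innerouter}, with a case-by-case verification. The only cosmetic difference is in the $D_4$ factors, where the paper resolves the triality ambiguity by noting that both restrictions project to the same transposition in $\Out(D_4)\cong S_3$, so the centralizer argument of Lemma~\ref{lem:innerouter} still applies, rather than by invoking the order-three diagram automorphisms available in $H$.
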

\begin{proof}
Let $h$ and $g$ be two automorphisms of $V_L$ of order~$n$ and with weight-one fixed-points $(V_L^h)_1\cong (V_L^g)_1$ as in the assumptions of Proposition~\ref{prop:part3}. We aim to show that $r(h)$ and $r(g)$ in $\Aut((V_L)_1)$ are conjugate under an automorphism in $\pi^{-1}(H)$. To this end we make a case-by-case analysis using Lemmas \ref{lem:prop3.6_2}, \ref{lem:innerouter}, \ref{lem:uniqueautlie} and \ref{lem:uniqueautlatvoa}.

Note that $p(g)$ and $p(h)$ are by Lemma~\ref{lem:uniqueautlatvoa} conjugate in $H$, i.e.\ $h$ is conjugate to $xg$ in $\Aut(V_L)$ for some inner automorphism $x\in K=\langle\{\e^{v_0}\,|\,v\in (V_L)_1\}\rangle$. Restricting to $(V_L)_1$ we can state that $r(h)$ is conjugate to $xr(g)$ for some $x\in\Inn((V_L)_1)$, conjugate under an automorphism in $r(\Aut(V_L))=\pi^{-1}(H)$. Hence, we may assume in the proof that $r(h)=xr(g)$ for some $x\in\Inn((V_L)_1)$.
\begin{enumerate}
\item[(9)] By Kac's theory, the automorphisms $r(h)$ and $r(g)$ of $(V_L)_1\cong A_{11}D_7E_6$ of order~2 with fixed-point Lie subalgebra $A_1B_5D_6F_4$ can only occur as
\begin{equation*}
A_{11}\leadsto D_6,\quad D_7\leadsto A_1B_5,\quad E_6\leadsto C_4.
\end{equation*}

We can study the restriction to each simple component separately. In all three cases there is a unique conjugacy class of automorphisms of order~2 of the simple component with the given fixed-point Lie subalgebra. Since all of the automorphisms must be outer, by Lemma~\ref{lem:innerouter} there is even a unique class up to conjugacy under inner automorphisms.

Hence $r(h)$ and $r(g)$ are conjugate under an inner automorphism of $A_{11}D_7E_6$.

\item[(10)] By Kac's theory, the automorphisms $r(h)$ and $r(g)$ of $(V_L)_1\cong A_{11}D_7E_6$ of order~2 with fixed-point Lie subalgebra $B_2B_4C_4C_6$ can only occur as
\begin{equation*}
A_{11}\leadsto C_6,\quad D_7\leadsto B_2B_4,\quad E_6\leadsto C_4.
\end{equation*}

With exactly the same reasoning as in case (9) we conclude that $r(h)$ and $r(g)$ are conjugate under an inner automorphism of $A_{11}D_7E_6$.

\item[(6)] By Kac's theory, the automorphisms $r(h)$ and $r(g)$ of $(V_L)_1\cong E_6^4$ of order~2 with fixed-point Lie subalgebra $C_4^2F_4^2$ can only occur as
\begin{equation*}
E_6\leadsto C_4,\quad E_6\leadsto C_4,\quad E_6\leadsto F_4,\quad E_6\leadsto F_4
\end{equation*}
up to reordering of the four copies of $E_6$.

Since the group $G_2\cong S_4$ is the full symmetric group permuting the four copies of $E_6$, it is possible to find elements $y_1,y_2\in H$
such that conjugating $r(h)$ and $r(g)$ by elements in $\pi^{-1}(\{y_1\})$ and $\pi^{-1}(\{y_2\})$, respectively, puts $r(h)$ and $r(g)$ into such a form that their fixed points coincide and occur exactly in the above specified order.

Now we can study the restriction of $r(h)$ and $g(h)$ to each simple component separately. With exactly the same reasoning as in case (9) we conclude that $r(h)$ and $r(g)$ are conjugate under an automorphism of $E_6^4$ in $\pi^{-1}(H)$.

\item[(3)] By Kac's theory, the automorphisms $r(h)$ and $r(g)$ of $(V_L)_1\cong A_8^3$ of order~2 with fixed-point Lie subalgebra $A_8B_4$ can only occur as
\begin{equation*}
A_8^2\leadsto A_8,\quad A_8\leadsto B_4
\end{equation*}
up to reordering of the three copies of $A_8$.

Recall that we may assume that $r(h)=xr(g)$ for some $x\in\Inn((V_L)_1)$. But then the fixed points of $r(h)$ and $r(g)$ must occur in the same order, and without loss of generality we may assume that they occur in the order specified above.

Then by Lemma~\ref{lem:prop3.6_2} the restrictions of $r(h)$ and $r(g)$ to the first two copies of $A_8$ are conjugate under an inner automorphism. On the third copy of $A_8$ the automorphisms $r(g)$ and $r(h)$ are outer and hence conjugate under an inner automorphism by Lemma~\ref{lem:innerouter}.

Hence $r(h)$ and $r(g)$ are conjugate under an inner automorphism of $A_8^3$.

\item[(4)] By Kac's theory, the automorphisms $r(h)$ and $r(g)$ of $(V_L)_1\cong E_8^3$ of order~2 with fixed-point Lie subalgebra $D_8E_8$ can only occur as
\begin{equation*}
E_8^2\leadsto E_8,\quad E_8\leadsto D_8
\end{equation*}
up to reordering of the three copies of $E_8$.

Once again we may take $r(h)=xr(g)$ for some $x\in\Inn((V_L)_1)$, and so without loss of generality the fixed points of $r(g)$ and $r(h)$ occur in the order specified above.

Then by Lemma~\ref{lem:prop3.6_2} the restrictions of $r(h)$ and $r(g)$ to the first two copies of $E_8$ are conjugate under an inner automorphism. On the third copy of $E_8$ the automorphisms $r(g)$ and $r(h)$ are conjugate and hence conjugate under an inner automorphism since $\Out(E_8)$ is trivial.

Hence $r(h)$ and $r(g)$ are conjugate under an inner automorphism of $E_8^3$.

\item[(7)] By Kac's theory, the automorphisms $r(h)$ and $r(g)$ of $(V_L)_1\cong A_4^6$ of order~2 with fixed-point Lie subalgebra $A_4^2B_2^2$ can only occur as
\begin{equation*}
A_4^2\leadsto A_4,\quad A_4^2\leadsto A_4,\quad A_4\leadsto B_2,\quad A_4\leadsto B_2
\end{equation*}
up to reordering of the six copies of $A_4$.

Once again we may take $r(h)=xr(g)$ for some $x\in\Inn((V_L)_1)$, and so without loss of generality the fixed points of $r(g)$ and $r(h)$ occur in the order specified above.

As before, by Lemma~\ref{lem:prop3.6_2}, the restrictions of $r(h)$ and $r(g)$ to the first two and second two copies of $A_4$ lie in the same class of automorphisms up to conjugation by inner automorphisms.

Since the restrictions of $r(h)$ and $r(g)$ to the second last and last copy of $A_4$ must be outer, by Lemma~\ref{lem:innerouter} they are conjugate under an inner automorphism.

Hence $r(h)$ and $r(g)$ are conjugate under an inner automorphism of $A_4^6$.

\item[(8)] By Kac's theory, the automorphisms $r(h)$ and $r(g)$ of $(V_L)_1\cong A_2^{12}$ of order~2 with fixed-point Lie subalgebra $A_1^4A_2^4$ can only occur as
\begin{equation*}
A_2^2\leadsto A_2\text{ (4 times)},\quad A_2\leadsto A_1\text{ (4 times)}
\end{equation*}
up to reordering of the twelve copies of $A_2$.

Once again we may take $r(h)=xr(g)$ for some $x\in\Inn((V_L)_1)$, and so without loss of generality the fixed points of $r(g)$ and $r(h)$ occur in the order specified above.

Exactly as in the previous case, using Lemmas \ref{lem:prop3.6_2} and \ref{lem:innerouter}, we conclude that $r(h)$ and $r(g)$ are conjugate under an inner automorphism of $A_2^{12}$.

\item[(11)] By Kac's theory, the automorphisms $r(h)$ and $r(g)$ of $(V_L)_1\cong A_4^6$ of order~5 with fixed-point Lie subalgebra $A_4\C^4$ can only occur as
\begin{equation*}
A_4^5\leadsto A_4,\quad A_4\leadsto\C^4
\end{equation*}
up to reordering of the six copies of $A_4$.

Once again we may take $r(h)=xr(g)$ for some $x\in\Inn((V_L)_1)$, and so without loss of generality the fixed points of $r(g)$ and $r(h)$ occur in the order specified above.

As before, by Lemma~\ref{lem:prop3.6_2}, the restrictions of $r(h)$ and $r(g)$ to the first five copies of $A_4$ lie in the same class of automorphisms up to conjugation by inner automorphisms.

An automorphism of $A_4$ with fixed-point Lie subalgebra $\C^4$ is necessarily inner. All automorphisms of order~5 of $A_4$ with fixed-point Lie subalgebra $\C^4$ are conjugate and even conjugate under an inner automorphism (see Section~\ref{sec:kac}).

Hence $r(h)$ and $r(g)$ are conjugate under an inner automorphism of $A_4^6$.

\item[(2)] By Kac's theory, the automorphisms $r(h)$ and $r(g)$ of $(V_L)_1\cong A_7^2D_5^2$ of order~2 with fixed-point Lie subalgebra $A_1^2A_3A_7B_2^2$ can only occur as
\begin{equation*}
A_7^2\leadsto A_7,\quad D_5\leadsto A_1^2A_3,\quad D_5\leadsto B_2^2
\end{equation*}
up to reordering of the two copies of $D_5$.

Once again we may take $r(h)=xr(g)$ for some $x\in\Inn((V_L)_1)$, and so without loss of generality the fixed points of $r(g)$ and $r(h)$ occur in the order specified above. Note that on the first copy of $D_5$ the automorphisms are inner, and on the second they are outer.

The restrictions of $r(h)$ and $r(g)$ to $A_7^2$ lie in the same class of automorphisms up to conjugation by inner automorphisms by Lemma~\ref{lem:prop3.6_2}.

The restrictions of $r(h)$ and $r(g)$ to the second copy of $D_5$ are outer and hence lie in the same class up to conjugacy under inner automorphisms by Lemma~\ref{lem:innerouter}.

Restricted to the first copy of $D_5$ the automorphisms $r(h)$ and $r(g)$ are inner, but all automorphisms of order~2 of $D_5$ with fixed-point Lie subalgebra $A_1^2A_3$ are conjugate and even conjugate under an inner automorphism (see Section~\ref{sec:kac}).

Hence $r(h)$ and $r(g)$ are conjugate under an inner automorphism of $A_7^2D_5^2$.

\item[(5)] By Kac's theory, the automorphisms $r(h)$ and $r(g)$ of $(V_L)_1\cong A_5^4D_4$ of order~2 with fixed-point Lie subalgebra $A_1A_2^2A_3A_5B_2\C$ can only occur as
\begin{equation*}
A_5^2\leadsto A_5,\quad A_5\leadsto A_2^2\C,\quad A_5\leadsto A_3,\quad D_4\leadsto A_1B_2
\end{equation*}
up to reordering of the four copies of $A_5$.

Once again we may take $r(h)=xr(g)$ for some $x\in\Inn((V_L)_1)$, and so without loss of generality the fixed points of $r(g)$ and $r(h)$ occur in the order specified above. Note that on the third copy of $A_5$ the automorphisms are inner, and on the fourth they are outer.

The restrictions of $r(h)$ and $r(g)$ to $A_5^2$ lie in the same class of automorphisms up to conjugation by inner automorphisms by Lemma~\ref{lem:prop3.6_2}.

Restricted to the third copy of $A_5$ the automorphisms $r(h)$ and $r(g)$ are inner, but all automorphisms of order~2 of $A_5$ with fixed-point Lie subalgebra $A_2^2\C$ are conjugate and even conjugate under an inner automorphism (see Section~\ref{sec:kac}).

The restrictions of $r(h)$ and $r(g)$ to the fourth copy of $A_5$ are outer and hence lie in the same class up to conjugacy under inner automorphisms by Lemma~\ref{lem:innerouter}.

The restrictions of $r(h)$ and $r(g)$ to $D_4$ are outer and square to inner automorphisms. By Kac's theory $r(h)$ and $r(g)$ are conjugate on $D_4$. Since we may assume that $r(h)=xr(g)$ for some $x\in\Inn((V_L)_1)$, both $r(h)$ and $r(g)$ project to the same transposition in $\Out(D_4)\cong S_3$. This implies that they can only be conjugate under an element that projects to the identity or to this transposition. In fact, they are conjugate under both.

Hence $r(h)$ and $r(g)$ are conjugate under an inner automorphism of $A_5^4D_4$.

\item[(1)] By Kac's theory, the automorphisms $r(h)$ and $r(g)$ of $(V_L)_1\cong A_9^2D_6$ of order~2 with fixed-point Lie subalgebra $A_5C_5D_5\C$ can only occur as
\begin{equation*}
A_9\leadsto C_5,\quad A_9\leadsto D_5,\quad D_6\leadsto A_5\C
\end{equation*}
up to reordering of the two copies of $A_9$.

Since the group $G_2\cong S_2$ is the full symmetric group permuting the two copies of $A_9$, it is possible to find elements $y_1,y_2\in H$
such that conjugating $r(h)$ and $r(g)$ by elements in $\pi^{-1}(\{y_1\})$ and $\pi^{-1}(\{y_2\})$, respectively, puts $r(h)$ and $r(g)$ into such a form that their fixed points coincide and occur exactly in the above specified order.

For each of the three simple components there is a unique conjugacy class of its automorphisms with the given fixed-point Lie subalgebra. On the two copies of $A_9$ the automorphisms must be outer, and by Lemma~\ref{lem:innerouter} there is a unique class up to conjugacy under inner automorphisms.

The automorphism on $D_6$ is inner. While there is a unique conjugacy class of automorphisms of $D_6$ of order~2 with fixed-point Lie subalgebra $A_5\C$, two such automorphisms need not be conjugate under an inner automorphism. Indeed, the conjugacy class in $\Aut(D_6)$ splits up into two classes with respect to conjugacy under $\Inn(D_6)$. If the restrictions of $r(h)$ and $r(g)$ to $D_6$ are conjugate under an inner automorphism, we are done. So let us assume they are conjugate under an outer automorphism, i.e.\ under an automorphism whose projection to $\Out(D_6)$ is $\tau$. The group $H\cong\Z_4$ contains two elements of order~4, namely $\sigma_{(1\,2)}\tau_{(1)}\tau_{(3)}$ and $\sigma_{(1\,2)}\tau_{(2)}\tau_{(3)}$. Let $y\in H$ be one of those two elements. Conjugating $r(h)$ by an element in $\pi^{-1}(\{y\})$ does not change the order of its fixed points on the two copies of $A_9$ and it brings it to a form such that the restrictions of $r(h)$ and $r(g)$ to $D_6$ are conjugate under an inner automorphism. So again we are done.

Hence $r(h)$ and $r(g)$ are conjugate under an automorphism of $A_9^2D_6$ in $\pi^{-1}(H)$.

\item[(12)] By Kac's theory, the automorphisms $r(h)$ and $r(g)$ of $(V_L)_1\cong D_{10}E_7^2$ of order~4 with fixed-point Lie subalgebra $A_1A_4A_7B_3\C$ can only occur as
\begin{equation*}
D_{10}\leadsto A_1A_4B_3\C,\quad E_7^2\leadsto A_7.
\end{equation*}

The restrictions of $r(h)$ and $r(g)$ to $E_7^2$ are conjugate under some element in $\Aut(E_7^2)$. First assume that this element is inner. Then, since the restrictions of $r(h)$ and $r(g)$ to $D_{10}$ are outer and hence conjugate under an inner automorphism by Lemma~\ref{lem:innerouter}, $r(h)$ and $r(g)$ are in total conjugate under an inner automorphism.

If on the other hand the restrictions of $r(h)$ and $r(g)$ to $E_7^2$ are conjugate under an element of $\Aut(E_7^2)$ that projects to $\sigma_{(2\,3)}\in\Out(E_7^2)\cong\Z_2$, then we note that, since the restrictions of $r(h)$ and $r(g)$ to $D_{10}$ are outer, they are by Lemma~\ref{lem:innerouter} also conjugate under an outer automorphism. So $r(h)$ and $r(g)$ are in total conjugate under an element in $\Aut(D_{10}E_7^2)$ that projects to $\tau_{(1)}\sigma_{(2\,3)} \in\Out(D_{10}E_7^2)$. Since $H=\langle\tau_{(1)}\sigma_{(2\,3)}\rangle\cong\Z_2$, this implies that $r(h)$ and $r(g)$ are conjugate under an automorphism of $D_{10}E_7^2$ in $\pi^{-1}(H)$.

\item[(13)] By Kac's theory, the automorphisms $r(h)$ and $r(g)$ of $(V_L)_1\cong D_{10}E_7^2$ of order~4 with fixed-point Lie subalgebra $A_7B_2B_6\C$ can only occur as
\begin{equation*}
D_{10}\leadsto B_2B_6\C,\quad E_7^2\leadsto A_7.
\end{equation*}

With exactly the same reasoning as in case (12) we conclude that $r(h)$ and $r(g)$ are conjugate under an automorphism of $D_{10}E_7^2$ in $\pi^{-1}(H)$.

\item[(14)] By Kac's theory, the automorphisms $r(h)$ and $r(g)$ of $(V_L)_1\cong E_6^4$ of order~6 with fixed-point Lie subalgebra $A_1A_2C_4\C$ can only occur as
\begin{equation*}
E_6^3\leadsto C_4,\quad E_6\leadsto A_1A_2\C
\end{equation*}
up to reordering of the four copies of $E_6$.

Once again we may take $r(h)=xr(g)$ for some $x\in\Inn((V_L)_1)$, and so without loss of generality the fixed points of $r(g)$ and $r(h)$ occur in the order specified above.

The restrictions of $r(h)$ and $r(g)$ to the fourth copy of $E_6$ are outer and hence lie in the same class up to conjugacy under inner automorphisms by Lemma~\ref{lem:innerouter}.

We slightly modify the argument of Lemma~\ref{lem:prop3.6_2} in order to show that the restrictions of $r(h)$ and $r(g)$ to the first three copies of $E_6$ are conjugate under an inner automorphism. We can write $r(g)$ without loss of generality as
\begin{equation*}
(v_1,v_2,v_3)\mapsto(\varphi_3v_3,\varphi_1v_1,\varphi_2v_2)
\end{equation*}
for $(v_1,v_2,v_3)\in E_6^3$ with $\varphi_1,\varphi_2,\varphi_3\in\Aut(E_6)$ and $\varphi_3\varphi_2\varphi_1$ outer, of order~2 and with fixed points $C_4$. Then, writing $x=(x_1,x_2,x_3)$ with $x_1,x_2,x_3\in\Inn(E_6)$, $r(h)=xr(g)$ is given by
\begin{equation*}
(v_1,v_2,v_3)\mapsto(x_1\varphi_3v_3,x_2\varphi_1v_1,x_3\varphi_2v_2)
\end{equation*}
for $(v_1,v_2,v_3)\in E_6^3$. Also $x_1\varphi_3x_3\varphi_2x_2\varphi_1$ is outer, of order~2 and has fixed points $C_4$. By Kac's theory $\varphi_3\varphi_2\varphi_1$ and $x_1\varphi_3x_3\varphi_2x_2\varphi_1$ are conjugate in $\Aut(E_6)$, and by Lemma~\ref{lem:innerouter} they are even conjugate under an inner automorphism, which we call $\xi\in\Inn(E_6)$, i.e.\ $\xi^{-1}x_1\varphi_3x_3\varphi_2x_2\varphi_1\xi=\varphi_3\varphi_2\varphi_1$. A simple calculation then shows that
\begin{equation*}
y^{-1}r(h)y=r(g)
\end{equation*}
with $y=(\xi,x_2\varphi_1\xi\varphi_1^{-1},x_3\varphi_2x_2\varphi_1\xi\varphi_1^{-1}\varphi_2^{-1})$. Since $\xi,x_2,x_3$ are inner, so is $y$, i.e.\ the projection of $y$ to $\Out(E_6^3)$ is trivial.

Hence $r(h)$ and $r(g)$ are conjugate under an inner automorphism of $E_6^4$.

\item[(15)] By Kac's theory, the automorphisms $r(h)$ and $r(g)$ of $(V_L)_1\cong D_4^6$ of order~8 with fixed-point Lie subalgebra $A_3\C^7$ can only occur as
\begin{equation*}
D_4^4\leadsto A_3\C,\quad D_4\leadsto\C^3,\quad D_4\leadsto\C^3
\end{equation*}
up to reordering of the six copies of $D_4$.

Since we may assume that $r(h)=xr(g)$ for some $x\in\Inn((V_L)_1)$, in this case the fixed-points of $r(h)$ and $r(g)$ occur in the same order, and we may as well assume that they occur in the above specified order.

Then by Lemma~\ref{lem:prop3.6_2} $r(h)$ is conjugate to $r(g)$ under an inner automorphism restricted to the first four copies of $D_4$. On the fifth and sixth copy of $D_4$ the automorphisms $r(g)$ and $r(h)$ are outer and hence conjugate under an inner automorphism by Lemma~\ref{lem:innerouter}.

Hence $r(h)$ and $r(g)$ are conjugate under an inner automorphism of $D_4^6$.\qedhere
\end{enumerate}
\end{proof}
We conclude this section with the proof of its main result.
\begin{proof}[Proof of Proposition~\ref{prop:part3}]
In each case let $\hat\nu$ be a standard lift to $\Aut(V_L)$ of the lattice automorphism $\nu \in\Aut(L)$ specified in Proposition~\ref{prop:part1}. Since $\hat\nu$ has the required order, weight-one fixed-point Lie subalgebra and, if necessary, conformal weight of its twisted module, the existence of the automorphism is established.

We will show that any automorphism $g\in\Aut(V_L)$ with these properties is conjugate to $\hat\nu$. We showed in the previous lemma that $r(g)$ and $r(\hat\nu)$ are conjugate in $\Aut((V_L)_1)$ under an automorphism in $\pi^{-1}(H)$.

This automorphism in $\pi^{-1}(H)$ can be lifted to an automorphism in $\Aut(V_L)$, and using Lemma~\ref{lem:trivialonla} this implies that $g$ is conjugate in $\Aut(V_L)$ to $\e^{(2\pi\i)h_0}\hat\nu$ for some $h\in Q'$, the dual lattice of $Q$.
In fact, by Lemma~\ref{lem:conjproj} we may assume that $g=\e^{(2\pi\i)h_0}\hat\nu$ for some $h\in\pi_\nu(Q')$. Note that now $\e^{(2\pi\i)h_0}$ and $\hat\nu$ commute.

We aim to show that $\e^{(2\pi\i)h_0}=\id$. Since $L$ is unimodular, $\e^{(2\pi\i)h_0}=\id$ for $h\in L$ so that we only need to consider representatives $h$ for $\pi_\nu(Q')/\pi_\nu(L)=(Q^\nu)'/(L^\nu)'$. Moreover, we only need to consider those elements $h$ such that $g=\e^{(2\pi\i)h_0}\hat\nu$ has the same order as $\hat\nu$. This is the case if and only if $\ord(\hat\nu)h\in L$.

For the reader's convenience we list $L$, the cycle shape $s$ of $\nu$, the fixed-point sublattices $L^\nu$ and $Q^\nu$ (their genera, see \cite{CS99}, Chapter~15) and $(Q^\nu)'/(L^\nu)'$:
\begin{equation*}
\renewcommand{\arraystretch}{1.2}
\setlength{\tabcolsep}{.95\tabcolsep}
\begin{tabular}{rlllll}
No.\ & $L$               & $s$                  & $L^\nu$                   & $Q^\nu$                                 &$(Q^\nu)'/(L^\nu)'$\\\hline
 (1) & $N(A_9^2D_6)$     & $1^{-8}2^{16}$       & $\II_{8,0}(2_{\II}^{+6})$ & $\II_{8,0}(2_0^{+8})$                      &$\Z_2$          \\
 (2) & $N(A_7^2D_5^2)$   & $1^{-8}2^{16}$       & $\II_{8,0}(2_{\II}^{+8})$ & $\II_{8,0}(2_{\II}^{+6}4_1^{+1}16_7^{+1})$ &$\Z_4$          \\
 (3) & $N(A_8^3)$        & $1^{-8}2^{16}$       & $\II_{8,0}(2_{\II}^{+8})$ & $\II_{8,0}(2_{\II}^{+8}9^{+1})$            &$\Z_3$          \\
 (4) & $N(E_8^3)$        & $1^{-8}2^{16}$       & $\II_{8,0}(2_{\II}^{+8})$ & $\II_{8,0}(2_{\II}^{+8})$                  &$\{0\}$         \\
 (5) & $N(A_5^4D_4)$     & $1^{-8}2^{16}$       & $\II_{8,0}(2_{\II}^{+8})$ & $\II_{8,0}(2_{\II}^{-6}4_4^{-2}3^{-2})$    &$\Z_6$          \\
 (6) & $N(E_6^4)$        & $1^{-8}2^{16}$       & $\II_{8,0}(2_{\II}^{+4})$ & $\II_{8,0}(2_{\II}^{+4})$                  &$\{0\}$         \\
 (7) & $N(A_4^6)$        & $1^{-8}2^{16}$       & $\II_{8,0}(2_{\II}^{+8})$ & $\II_{8,0}(2_{\II}^{+8}5^{+2})$            &$\Z_5$          \\
 (8) & $N(A_2^{12})$     & $1^{-8}2^{16}$       & $\II_{8,0}(2_{\II}^{+8})$ & $\II_{8,0}(2_{\II}^{+8}3^{+4})$            &$\Z_3\times\Z_3$\\
 (9) & $N(A_{11}D_7E_6)$ & $1^{-8}2^{16}$       & $\II_{8,0}(2_{\II}^{+4})$ & $\II_{8,0}(2_{\II}^{+4})$                  &$\{0\}$         \\
(10) & $N(A_{11}D_7E_6)$ & $1^{-8}2^{16}$       & $\II_{8,0}(2_{\II}^{+6})$ & $\II_{8,0}(2_0^{+8})$                      &$\Z_2$          \\
(11) & $N(A_4^6)$        & $1^{-1}5^5$          & $\II_{4,0}(5^{+3})$       & $\II_{4,0}(5^{+3}25^{+1})$                 &$\Z_5$          \\
(12a)& $N(D_{10}E_7^2)$  & $1^22^{-9}4^{10}$    & $\II_{3,0}(2_3^{+3})$     & $\II_{3,0}(2_3^{+3})$                      &$\{0\}$         \\
(12b)& $N(D_{10}E_7^2)$  & $2^{-4}4^8$          & $\II_{4,0}(2_4^{+4})$     & $\II_{4,0}(2_4^{+4})$                      &$\{0\}$         \\
(13) & $N(D_{10}E_7^2)$  & $2^{-4}4^8$          & $\II_{4,0}(2_{\II}^{-2})$ & $\II_{4,0}(2_{\II}^{-2})$                  &$\{0\}$         \\
(14) & $N(E_6^4)$        & $1^32^{-3}3^{-9}6^9$ & $\{0\}$                   & $\{0\}$                                    &$\{0\}$         \\
(15) & $N(D_4^6)$        & $4^{-2}8^4$          & $\II_{2,0}(4_2^{+2})$     & $\II_{2,0}(8_2^{+2})$                      &$\Z_2$
\end{tabular}
\renewcommand{\arraystretch}{1}
\end{equation*}

Now we proceed case by case:
\begin{enumerate}
\item In this case $\pi_\nu(Q')/\pi_\nu(L)$ is isomorphic to $\Z_2$. For trivial $h$ we obtain $\rho(V_L(g))=1$ while for non-trivial $h$ we obtain $\rho(V_L(g))=5/4$. Hence, if we demand that $\rho(V_L(g))=1$, $g$ has to be conjugate to $\hat\nu$.
\item In this case $\pi_\nu(Q')/\pi_\nu(L)$ is isomorphic to $\Z_4$, but only the trivial element preserves the order of $\hat\nu$. Hence $g$ is conjugate to $\hat\nu$.
\item In this case $\pi_\nu(Q')/\pi_\nu(L)$ is isomorphic to $\Z_3$, but only the trivial element preserves the order of $\hat\nu$. Hence $g$ is conjugate to $\hat\nu$.
\item In this case $\pi_\nu(Q')/\pi_\nu(L)$ is trivial since $L=Q$, and so $g$ is conjugate to $\hat\nu$.
\item In this case $\pi_\nu(Q')/\pi_\nu(L)$ is isomorphic to $\Z_6$, but only the trivial element preserves the order of $\hat\nu$. Hence $g$ is conjugate to $\hat\nu$.
\item Also in this case $\pi_\nu(Q')/\pi_\nu(L)$ is trivial. Hence $g$ is conjugate to $\hat\nu$.
\item In this case $\pi_\nu(Q')/\pi_\nu(L)$ is isomorphic to $\Z_5$, but only the trivial element preserves the order of $\hat\nu$. Hence $g$ is conjugate to $\hat\nu$.
\item In this case $\pi_\nu(Q')/\pi_\nu(L)$ is isomorphic to $\Z_3\times\Z_3$, but only the trivial element preserves the order of $\hat\nu$. Hence $g$ is conjugate to $\hat\nu$.
\item In this case $\pi_\nu(Q')/\pi_\nu(L)$ is trivial. Hence $g$ is conjugate to $\hat\nu$.
\item In this case $\pi_\nu(Q')/\pi_\nu(L)$ is isomorphic to $\Z_2$. For trivial $h$ we obtain $\rho(V_L(g))=1$ while for non-trivial $h$ we obtain $\rho(V_L(g))=5/4$. Hence if we demand that $\rho(V_L(g))=1$, $g$ has to be conjugate to $\hat\nu$.
\item In this case $\pi_\nu(Q')/\pi_\nu(L)$ is isomorphic to $\Z_5$. For trivial $h$ we obtain $\rho(V_L(g))=1$ while for the four non-trivial choices of $h$ we obtain $\rho(V_L(g))=27/25$ or $\rho(V_L(g))=28/25$. Hence, if we demand that $\rho(V_L(g))=1$, $g$ has to be conjugate to $\hat\nu$.
\item[(12a)] In this case $\pi_\nu(Q')/\pi_\nu(L)$ is trivial. Hence $g$ is conjugate to $\hat\nu$.
\item[(12b)] In this case $\pi_\nu(Q')/\pi_\nu(L)$ is trivial. Hence $g$ is conjugate to $\hat\nu$.
\item[(13)] In this case $\pi_\nu(Q')/\pi_\nu(L)$ is trivial. Hence $g$ is conjugate to $\hat\nu$.
\item[(14)] In this case $\pi_\nu(Q')/\pi_\nu(L)$ is trivial (we see from the cycle shape of $\nu$ that $\pi_\nu$ is the zero map). Hence $g$ is conjugate to $\hat\nu$.
\item[(15)] In this case $\pi_\nu(Q')/\pi_\nu(L)$ is isomorphic to $\Z_2$. For trivial $h$ we obtain $\rho(V_L(g))=1$ while for the non-trivial choice of $h$ we obtain $\rho(V_L(g))=17/16$. Hence, if we demand that $\rho(V_L(g))=1$, $g$ has to be conjugate to $\hat\nu$.
\end{enumerate}

In order to prove the second statement of the proposition we note that the inverse orbifold automorphism $\amgis\in\Aut(V^{\orb(\sigma_h)})$ with respect to the inner automorphism $\sigma_h$ (see table on p.~\pageref{page:table}) has the same order and weight-one fixed-points as $\hat\nu$. In all cases except (1), (10), (11) and (15) this shows that $\hat\nu\in\Aut(V_L)$ and $\amgis\in\Aut(V^{\orb(\sigma_h)})$ are conjugate under the isomorphism $V^{\orb(\sigma_h)}\cong V_L$.

For the four remaining cases we still need to show that $\rho(V^{\orb(\sigma_h)}(\amgis))=1$. The $\amgis$-twisted $V^{\orb(\sigma_h)}$-module $V^{\orb(\sigma_h)}(\amgis)$ decomposes as
\begin{equation*}
V^{\orb(\sigma_h)}(\amgis)=\bigoplus_{i\in\Z_n}W^{(i,1)},
\end{equation*}
as described in Section~\ref{sec:invorb}. We know that $\rho(V(\sigma_h^i))\geq 1$ for all $i\in\Z_n\setminus\{0\}$, and hence, since $V(\sigma_h^i)=\bigoplus_{j\in\Z_n}W^{(i,j)}$, $\rho(W^{(i,j)})\geq 1$ for $i\in\Z_n\setminus\{0\}$ and $j\in\Z_n$. Moreover, since $V=\bigoplus_{j\in\Z_n}W^{(0,j)}$ is of CFT-type, $\rho(W^{(0,j)})\geq 1$ for $j\in\Z_n\setminus\{0\}$. Hence, it suffices to show that $\rho(W^{(0,1)})=1$ or in other words that $\{v\in V_1\,|\,\sigma_hv=\xi_nv\}$ is non-empty. This is easily verified for the cases at hand.
\end{proof}

\section{Main Result}\label{sec:main}
Combining the results of Propositions \ref{prop:part1}, \ref{prop:part2} and \ref{prop:part3} and following the procedure described in Section~\ref{sec:approach} we arrive at the main result of this text:
\begin{thm}[Uniqueness]\label{thm:main}
\maintheorem
\end{thm}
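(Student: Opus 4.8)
The plan is to establish all fifteen uniqueness statements at once by assembling the three results proved above---Propositions~\ref{prop:part1}, \ref{prop:part2} and \ref{prop:part3}---according to the reverse-orbifold scheme laid out in Section~\ref{sec:approach}. Existence is immediate: for each of the fifteen cases Proposition~\ref{prop:part1} exhibits a concrete \strathol{} \voa{} $V_L^{\orb(\hat\nu)}$ of central charge $24$ realising the prescribed weight-one Lie algebra, so it remains to show that \emph{any} such \voa{} is isomorphic to this distinguished one. Thus I fix one row of the table and let $V$ be an arbitrary \strathol{} \voa{} of central charge $24$ whose weight-one Lie algebra $V_1$ is isomorphic to the one in that row; the goal is to prove $V\cong V_L^{\orb(\hat\nu)}$.

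First I would produce, from the Lie-algebra data of $V_1$ alone, the inner automorphism $\sigma_h=\e^{-(2\pi\i)h_0}$ of $V$ associated with the element $h$ in the table on p.~\pageref{page:table}. By the discussion of Section~\ref{sec:part2} this $\sigma_h$ has type $n\{0\}$, and Lemma~\ref{lem:confgeqone} gives $\rho(V(\sigma_h^i))\geq 1$ for all $i$, so that $V^{\sigma_h}$ satisfies the positivity assumption and Corollary~\ref{cor:dimform} applies (all relevant $n\in\{2,4,5,6,8\}$ have $\Gamma_0(n)$ of genus zero). Proposition~\ref{prop:part2} then computes $d=\dim(V_1^{\orb(\sigma_h)})$ and, via Kac's theory together with Schellekens' list, pins down $V_1^{\orb(\sigma_h)}$ as a rank-$24$ semisimple Lie algebra; by Corollary~1.4 of \cite{DM04b} the orbifold $V^{\orb(\sigma_h)}$ is forced to be the lattice \voa{} $V_L$ for the Niemeier lattice $L$ named in Proposition~\ref{prop:part1}.

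Next I would invoke the inverse orbifold construction of Section~\ref{sec:invorb}: there is an automorphism $\amgis\in\Aut(V^{\orb(\sigma_h)})\cong\Aut(V_L)$ of order $n$ with $(V^{\orb(\sigma_h)})^{\orb(\amgis)}=V$ and with weight-one fixed-points $(V^{\orb(\sigma_h)})_1^{\amgis}=V_1^{\sigma_h}\cong (V_L^{\hat\nu})_1$. Proposition~\ref{prop:part3} shows that the order, the weight-one fixed-point Lie subalgebra and (where needed) the condition $\rho(V_L(g))=1$ single out a unique conjugacy class in $\Aut(V_L)$, whence $\amgis$ is conjugate to the standard lift $\hat\nu$ from Proposition~\ref{prop:part1}. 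Since conjugate automorphisms yield isomorphic orbifold \voa{}s, I conclude
\begin{equation*}
V=(V^{\orb(\sigma_h)})^{\orb(\amgis)}\cong (V_L)^{\orb(\hat\nu)}=V_L^{\orb(\hat\nu)},
\end{equation*}
a \voa{} independent of $V$. Running through all fifteen rows---whose affine structures correspond to cases $44,33,36,62,26,52,22,13,53,48,9,40,56,21,7$ on Schellekens' list---then gives the theorem.

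The main obstacle is the conjugacy step of Proposition~\ref{prop:part3}, i.e.\ verifying that $\amgis$ and $\hat\nu$ really lie in the same $\Aut(V_L)$-class. One first matches the outer parts: via the homomorphism $p\colon\Aut(V_L)\to H$ and Kac's theory (Lemmas~\ref{lem:uniqueautlie}--\ref{lem:uniqueautlie2}) the two automorphisms agree up to an inner twist $\e^{(2\pi\i)h_0}\hat\nu$, where the combinatorial Lemma~\ref{lem:prop3.6_2} (handling cyclic permutations of isomorphic simple ideals) and Lemma~\ref{lem:innerouter} (outer classes collapse under $\Inn$ away from $D_4$) carry most of the weight. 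The remaining difficulty is to force this inner twist to be trivial: by Lemmas~\ref{lem:trivialonla} and \ref{lem:conjproj} one reduces $h$ to a representative of $\pi_\nu(Q')/\pi_\nu(L)=(Q^\nu)'/(L^\nu)'$; in most cases every non-trivial representative is excluded because it fails to preserve the order of $\hat\nu$ or because the genus computation makes the quotient vanish, while in the four cases~(1), (10), (11) and (15) the surviving candidates are separated only by the conformal weight $\rho(V_L(g))$, which is precisely why that invariant appears in the hypotheses. Carrying out this case analysis---computing the fixed-point genera and the resulting conformal weights---is where the real effort lies.
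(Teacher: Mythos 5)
Your proposal is correct and follows essentially the same route as the paper: the paper's proof of Theorem~\ref{thm:main} is exactly the assembly of Propositions~\ref{prop:part1}, \ref{prop:part2} and \ref{prop:part3} via the reverse-orbifold scheme of Section~\ref{sec:approach}, with the conjugacy of $\amgis$ and $\hat\nu$ (including the $\rho(V_L(g))=1$ verification in cases (1), (10), (11) and (15)) already packaged into the second statement of Proposition~\ref{prop:part3}. Your added commentary on where the real work lies accurately reflects the paper's own structure.
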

The uniqueness of the case $B_8E_{8,2}$ (case~62) has already been proved in \cite{LS15a} using framed \voa{}s. Moreover, after the completion of this work, Lam and Shimakura also showed the uniqueness of $A_{4,5}^2$ (case~9) and $A_{1,2}A_{3,4}^3$ (case~7) starting from the Leech lattice \voa{} \cite{LS17}.

This leaves 9 cases on Schellekens' list, for which the uniqueness has yet to be proved, including the case $V_1=\{0\}$ (see Remark~6.8 in \cite{LS17} for an explicit list).

\bibliographystyle{alpha_noseriescomma}
\bibliography{quellen}{}

\end{document}